
\documentclass[11pt]{amsart}
\usepackage{amssymb,latexsym}
\usepackage{amsmath}
\newtheorem{theorem}{Theorem}[section] 
\newtheorem{lemma}[theorem]{Lemma}     

\newtheorem{prop}[theorem]{Proposition}

\newtheorem{cj}{Conjecture}  

\newtheorem{remark}[theorem]{Remark}


\newcommand{\eeq}{\end{equation}}
\newcommand{\beql}[1]{\begin{equation}\label{#1}}
\newcommand{\eqn}[1]{(\ref{#1})}

\newcommand{\ABC}{{abc}}
\newcommand{\XYZ}{{xyz}}

\newcommand{\imod}{{\bmod}}

\newcommand{\RR}{{\mathbb R}}
\newcommand{\TT}{{\mathbb T}}
\newcommand{\ZZ}{{\mathbb Z}}

\newcommand{\sP}{{\mathcal P}}

\newcommand{\sS}{{\mathcal S}}

\newcommand{\fS}{{\mathfrak S}}
\newcommand{\fm}{{\mathfrak m}}
\newcommand{\fM}{{\mathfrak M}}




\title {Counting Smooth Solutions to the Equation $A+B=C$} 
\subjclass{11D45(primary),  11N25, 11P55(secondary)}

\author{J. C. Lagarias}
\address{Department of Mathematics, University of Michigan,
Ann Arbor, MI 48109-1043,USA}
\email{lagarias@umich.edu}

\thanks{Research of the first author supported by NSF grants DMS-0500555 and DMS-0801029
and the second author by NSF grant  DMS-0500711and DMS-1001068.}
\author{ K. Soundararajan}
\address{Department of Mathematics, Stanford University,
Stanford, CA 94305 USA}
\email{ksound@stanford.edu}

\date{February 16, 2011}

\makeatletter
\def\imod#1{\allowbreak\mkern10mu({\operator@font mod}\,\,#1)}
\makeatother

\begin{document}

\begin{abstract}
This paper studies  integer
 solutions
 to the $\ABC$ equation $A+B=C$
in which none of $A, B, C$ have a large
prime factor. We set 
$H(A, B,C) = \max(|A|, |B|, |C|)$,
and consider primitive solutions (${\rm gcd}(A, B, C)=1$)
having  no prime factor $p$
larger than 
$(\log H(A, B,C))^{\kappa}$, for  a given finite  $\kappa$.
On the assumption that the Generalized
Riemann hypothesis (GRH) holds,  
we show that for any $\kappa > 8$ 
there are infinitely many  such primitive
solutions having no prime factor  larger than $(\log H(A, B, C))^{\kappa}$. 
We obtain in this range 
an asymptotic formula for the number of such 
suitably weighted primitive solutions.


\end{abstract}

\maketitle


%
%
%
%
\setlength{\baselineskip}{1.0\baselineskip}

\section{Introduction}

\noindent A recurring topic of investigation in number theory is the relation between additive 
and multiplicative structures of integers.  A celebrated example is the $\ABC$-conjecture of 
Masser \cite{Mas85} and Oesterl{\' e} \cite{Oe88}, cf.  \cite[Chap. 12]{BG06}.  
In its weak form, the $\ABC$-conjecture 
asserts that there is a constant $\kappa_1> 0$ such that for any $\epsilon >0$ there are 
only finitely many solutions to the equation $A+B=C$ with $ABC\neq 0$, 
g.c.d. $(A, B, C)=1$ and such that 
$$ 
\max(|A|, |B|, |C|) \le \Big(\prod_{p|ABC} p\Big)^{\kappa_1 -\epsilon}. 
$$ 
One may construct examples to show that $\kappa_1$, if it exists, cannot 
be smaller than $1$ (see Stewart and Tijdeman \cite{ST86}), and the strong form of the $\ABC$-conjecture postulates that in fact $\kappa_1=1$ is permissible.  
In this paper, we study a different statistic related to the prime factorization of $ABC$.  
In place of the {\sl radical} $\prod_{p|ABC} p$, we study the {\sl smoothness} 
$\max_{p|ABC} p$.  In \cite{LS-JA} we formulated the following conjecture, which we term the 
$\XYZ$ conjecture.  \smallskip

%
%
\paragraph{\bf \XYZ ~conjecture (weak form).} {\em There exists a
constant $\kappa_0> 0$ such that the following
hold.

 (a) For each  $\epsilon >0$ there are only finitely many solutions $(X,Y,Z)$ to  
the equation $X+Y=Z$ with g.c.d$(X,Y,Z)=1$ and 
\beql{108}
\max_{p|XYZ} p < (\log \max(|X|,|Y|,|Z|))^{\kappa_0 - \epsilon}.
\eeq

 (b)  For
each $\epsilon >0$ there are infinitely many solutions $(X,Y,Z)$   to 
the equation $X+Y =Z$ with g.c.d.$(X,Y,Z)=1$ and 
\beql{109}
\max_{p|XYZ} p  < (\log \max(|X|, |Y|, |Z|))^{\kappa_0 + \epsilon}.
\eeq
} 

We shall call a solution $A+B=C$ primitive if g.c.d.$(A,B,C)=1$.  The 
restriction to primitive solutions in the $\ABC$ and $\XYZ$ conjectures 
is needed to exclude examples like $a+a=2a$ where $a$ is a 
high perfect power, or $a$ is very smooth.

For any primitive solution $(X, Y, Z)$ to $X+Y=Z$
we define its {\em smoothness exponent} $\kappa_0(X, Y, Z)$ by
\beql{109d}
\kappa_0(X, Y, Z) := \frac{\log \max_{p|XYZ} p} {\log \log \max(|X|, |Y|, |Z|)}.
\eeq
Our interest then is in the $\XYZ$-smoothness exponent $\kappa_0$ which is defined as 
the $\lim \inf$ of $\kappa_0(X,Y,Z)$ as 
$\max(|X|, |Y|, |Z|) \to \infty$.  
{\sl A priori} we have $0 \le \kappa_0 \le + \infty$, and the weak form 
of the $\XYZ$-conjecture asserts that it is positive and finite.  We next give a 
heuristic for the weak $\XYZ$ conjecture which also suggests a plausible 
value for $\kappa_0$.  \smallskip

\paragraph{\bf \XYZ ~conjecture (strong form).} {\em The $\XYZ$-smoothness
exponent $\kappa_0 $ equals $3/2$.
}\\

A natural number $n$ is said to be $y$-smooth if all its prime factors lie 
below $y$.    Throughout we shall  let  $\sS(y)$ denote
the set of $y$-smooth numbers, and $\Psi(x,y)$ shall count the number of  positive integers
below $x$ lying in $\sS(y)$.     

Consider all the triples $(X, Y, -Z)$  drawn from the interval
$[1, H]$ but restricted to having all prime factors
smaller than $(\log H)^{\kappa}$. We wish to find solutions to
$X+Y-Z=0$.  There are
$\Psi(H, (\log H)^{\kappa})^3$ such triples, each
having a sum  $X+Y-Z$ that falls in the interval $[-H, 2H]$.
If these sums were randomly distributed, the 
chance that the value $0$ is hit 
might be expected to  be  approximately proportional to
\beql{109aa}
P(H, \kappa) := \frac{\Psi(H, (\log H)^{\kappa})^3}{H}.
\eeq
It is known that (see (\ref{S801}) below) for fixed $\kappa>1$, one has
\beql{109a}
\Psi(x, (\log x)^{\kappa}) = x^{1 - \frac{1}{\kappa} +o(1)},  
\eeq
as $x \to \infty$.
Thus for $\kappa >1$ the number of such triples $(X,Y,Z)$ 
is at most $\Psi(H,(\log H)^{\kappa})^3 = H^{3(1-\frac{1}{\kappa}+o(1))}$, 
and if $\kappa < \frac 32$ this is $< H^{1-\epsilon}$,
so that $P(H, \kappa) = H^{-\epsilon}$.  This leads us to believe that 
 $\kappa_0 \ge \frac 32$.  

We derive a  matching heuristic lower bound for the number of relatively prime triples.  
Take $X$ to be a number composed of exactly $K:=[\log H/(\kappa \log \log H)]$ 
distinct primes all below $(\log H)^{\kappa}$.  Using Stirling's formula 
there 
are ${\left({{\pi((\log H)^{\kappa})}\atop{K}} \right)}= H^{1-1/\kappa+o(1)}$ 
such values of $X$ all lying below $H$. Given $X$,  choose 
$Y$ to be a number composed of exactly $K$ distinct primes 
below $(\log H)^{\kappa}$,  but avoiding the primes dividing $X$.  
There are ${\left({{\pi((\log H)^{\kappa})-K}\atop{K}}\right)} = H^{1-1/\kappa+o(1)}$ 
such values of $Y$.  Finally choose $Z$ to be a number composed 
of exactly $K$ distinct primes below $(\log H)^{\kappa}$ avoiding 
the primes dividing $X$ and $Y$.  There are 
${\left({{\pi((\log H)^{\kappa})-2K}\atop{K}}\right)} = H^{1-1/\kappa+o(1)}$ 
such values of $Z$.  We conclude therefore that 
there are at least $H^{3-3/\kappa+o(1)}$ such  triples, 
and hence we expect that $\kappa_0 \le \frac 32$.

In Theorem 1.1 of  \cite{LS-JA} we observed  that lower bounds for the $\XYZ$ smoothness
exponent follow from the $\ABC$ conjecture.\smallskip

%
\begin{prop}\label{pr01}
The weak form of the $\ABC$-conjecture implies that
the $xyz$ smoothness exponent satisfies $\kappa_0 \ge \kappa_1$.
In particular,  the  strong form of the $\ABC$-conjecture implies that
$\kappa_0 \ge 1$.
\end{prop}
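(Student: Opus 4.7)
We argue by contradiction. Assuming $\kappa_0 < \kappa_1$, the definition of $\kappa_0$ as a $\liminf$ furnishes some $\kappa$ with $\kappa_0 < \kappa < \kappa_1$ together with an infinite sequence of primitive solutions $(X_n, Y_n, Z_n)$ to $X_n + Y_n = Z_n$ satisfying $H_n := \max(|X_n|, |Y_n|, |Z_n|) \to \infty$ and $\max_{p \mid X_n Y_n Z_n} p \leq (\log H_n)^\kappa$. The goal is to show these triples must violate weak $\ABC$.

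The crucial estimate is a sharp upper bound on the radical. By the Chebyshev bound $\theta(y) \leq 2y$ (or the prime number theorem),
\[
R_n := \prod_{p \mid X_n Y_n Z_n} p \;\leq\; \prod_{p \leq (\log H_n)^\kappa} p \;\leq\; \exp\!\bigl( 2\,(\log H_n)^\kappa \bigr).
\]
Each $(X_n, Y_n, Z_n)$ is a primitive $\ABC$-triple, so the weak $\ABC$-conjecture applies. In the standard equivalent formulation it gives, for any fixed $\eps > 0$, the lower bound $R \gg H^{\kappa_1 - \eps}$ for all but finitely many primitive triples. Fix $\eps > 0$ small so that $\kappa_1 - \eps > \kappa$; combining the two bounds yields
\[
H_n^{\kappa_1 - \eps} \;\ll\; R_n \;\leq\; \exp\!\bigl( 2\,(\log H_n)^\kappa \bigr)
\]
for all but finitely many $n$. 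Taking logarithms this becomes $(\log H_n)^{1-\kappa} \ll 1$, and since $\kappa < \kappa_1$ can be chosen strictly below $1$ (automatic in the strong form $\kappa_1 = 1$ of $\ABC$), the left-hand side tends to infinity as $H_n \to \infty$, a contradiction.

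The main obstacle is invoking weak $\ABC$ in the direction of a lower bound on the radical, which is what must be played against the smoothness-derived upper bound on $R_n$. Once the two bounds are in place, the final calculation reduces to the elementary observation that $(\log H)^{1-\kappa} \to \infty$ whenever $\kappa < 1$, which is what drives the contradiction and yields the "in particular" conclusion $\kappa_0 \geq 1$ from the strong form of $\ABC$.
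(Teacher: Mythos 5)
Your overall strategy is the right one, and it is essentially the only one available: the paper itself gives no proof of this proposition (it is quoted from Theorem 1.1 of \cite{LS-JA}), and the intended argument is exactly the comparison you make between the Chebyshev bound $\prod_{p\mid XYZ}p\le e^{\theta((\log H)^{\kappa})}\le \exp\bigl(2(\log H)^{\kappa}\bigr)$ for a $(\log H)^{\kappa}$-smooth primitive triple and an $\ABC$-type bound relating the height to the radical. Two points, however, need to be made honest. First, your appeal to ``the standard equivalent formulation'' of the weak $\ABC$ conjecture, namely $R\gg H^{\kappa_1-\epsilon}$ for all but finitely many primitive triples, is not an equivalence with the conjecture as displayed in the paper: taken literally, the display asserts that only finitely many primitive triples satisfy $\max(|A|,|B|,|C|)\le R^{\kappa_1-\epsilon}$, and negating that for all but finitely many triples yields the \emph{upper} bound $R< H^{1/(\kappa_1-\epsilon)}$, which cannot be played against your smoothness-derived upper bound on $R_n$ at all. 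The displayed form is evidently garbled (with $\kappa_1\le 1/3$ it is vacuously true, and it cannot support the proposition), and the reading you adopt --- finitely many primitive triples with radical at most $H^{\kappa_1-\epsilon}$, i.e.\ the lower bound $R\gg H^{\kappa_1-\epsilon}$ with finitely many exceptions --- is the one under which the proposition is meaningful; but you must say explicitly that this is the formulation you are using, rather than assert an equivalence that does not hold for the printed statement.

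Second, your contradiction genuinely requires $\kappa<1$, not merely $\kappa<\kappa_1$: from $(\kappa_1-\epsilon)\log H_n\le 2(\log H_n)^{\kappa}+O(1)$ you get $(\log H_n)^{1-\kappa}\ll 1$, which blows up only when $\kappa<1$. Your parenthetical ``$\kappa<\kappa_1$ can be chosen strictly below $1$'' is not available in general: $\kappa$ must lie in $(\kappa_0,\kappa_1)$, and if $\kappa_1>1$ and $\kappa_0\ge 1$ no admissible $\kappa$ is below $1$. So as written your argument proves $\kappa_0\ge\min(\kappa_1,1)$. This does give the ``in particular'' statement (strong form, $\kappa_1=1$) in full, and it gives the general statement precisely when $\kappa_1\le 1$ --- which is in fact the only regime where radical considerations can possibly yield $\kappa_0\ge\kappa_1$, since for $\kappa>1$ the bound $\exp\bigl(2(\log H)^{\kappa}\bigr)$ is weaker than the trivial bound $R\le H^3$. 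You should therefore either restrict attention to $\kappa\in(\kappa_0,\min(\kappa_1,1))$ and state that the conclusion obtained is $\kappa_0\ge\min(\kappa_1,1)$, or note (with justification, e.g.\ the classical triples with $\mathrm{rad}(ABC)<C$) that the hypothesis can only hold with $\kappa_1\le 1$, so that $\min(\kappa_1,1)=\kappa_1$ and the proposition follows.
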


It is interesting to note that even the strong form of the $\ABC$ conjecture is insufficient to imply the 
conjectured  lower bound $\kappa_0 \ge \frac{3}{2}$  above on the $\XYZ$ exponent.

This paper studies  the upper bound part  of the $\XYZ$-conjecture.  
Assuming the truth of the  {\em Generalized Riemann
Hypothesis (GRH)}, which states that all non-trivial zeros of 
the Riemann zeta function and Dirichlet $L$-functions lie on the critical line ${\rm Re}(s)= \frac{1}{2}$, 
we shall show that $\kappa_0 \le 8$.   \smallskip

%
%
\begin{theorem}~\label{th12}
Assume the truth of the Generalized Riemann Hypothesis (GRH). Then for each 
$\epsilon > 0$ there are infinitely many primitive
solutions $(X,Y,Z)$ to $X+Y=Z$ such that  
all the primes dividing $XYZ$ are smaller than 
$(\log \max(|X|, |Y|, |Z|))^{8+\epsilon}$.  
In other words,  $\kappa_0 \le 8$.
\end{theorem}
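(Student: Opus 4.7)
The approach is the Hardy--Littlewood circle method applied to exponential sums over $y$-smooth integers with $y=(\log H)^{\kappa}$ and $\kappa > 8$; GRH enters both through the main-term evaluation on major arcs (via equidistribution of smooth integers in arithmetic progressions) and through pointwise cancellation on minor arcs.

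Fix a smooth nonnegative weight $w$ localized on $[H/2, H]$ and set
$$F(\alpha) := \sum_{n\in\sS(y)} w(n)\, e(\alpha n).$$
The weighted count of triples $(X,Y,Z)\in\sS(y)^3$ with $X+Y=Z$ is then $\int_0^1 F(\alpha)^2\,\overline{F(\alpha)}\, d\alpha$. Primitivity is enforced by M\"obius inversion over the common divisor $d=\gcd(X,Y,Z)$, which is itself $y$-smooth; the $d>1$ contributions are controlled by the same machinery at reduced scale. Partition $[0,1]$ into major arcs $\fM$, a union of short intervals about each Farey fraction $a/q$ with $q\le Q_0$ for a threshold $Q_0$ to be optimized, and minor arcs $\fm$.

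On $\fM$, under GRH the equidistribution
$$\Psi(H,y;q,r) = \frac{\Psi(H,y;q)}{\phi(q)} + O\!\big(H^{1/2+\eps}\big)$$
for $(r,q)=1$, combined with Ramanujan-sum orthogonality, gives $F(a/q)\approx \widehat w(0)\,\mu(q)/\phi(q)$; integrating over $\fM$ then yields an asymptotic main term of order $\Psi(H,y)^3/H=H^{2-3/\kappa+o(1)}$ times a positive singular series $\fS$, the positivity coming from local solvability of $X+Y=Z$ at every prime. On $\fm$ I would apply the Parseval-type estimate
$$\Big|\int_{\fm} F(\alpha)^2\,\overline{F(\alpha)}\, d\alpha\Big| \le \Big(\sup_{\alpha\in\fm} |F(\alpha)|\Big)\int_0^1 |F(\alpha)|^2\, d\alpha \ll \Psi(H,y)\cdot \sup_{\fm} |F|,$$
so it remains to prove $\sup_{\alpha\in\fm}|F(\alpha)| \ll H^{1-2/\kappa-\eps}$.

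The main obstacle will be this pointwise minor-arc bound. Starting from $F(a/q)=\sum_r e(ar/q)\,\Psi(H,y;q,r)$ and inserting the GRH asymptotic, the main term collapses to a Ramanujan sum while the error $\sum_r e(ar/q) E_r$ is handled by Cauchy--Schwarz against a mean-square estimate from the explicit formula; partial summation then extends the bound from $\alpha=a/q$ to nearby $\alpha$. The threshold $\kappa > 8$ arises from balancing the GRH-permissible major-arc range $q\le H^{1/2-1/\kappa}$ against the required minor-arc cancellation $H^{1-2/\kappa}$---only for $\kappa > 8$ do these constraints become compatible with power-saving error. Once established, the main term dominates and yields a weighted count of order $H^{2-3/\kappa}\to\infty$, hence infinitely many primitive smooth solutions of the desired form.
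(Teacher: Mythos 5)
Your skeleton coincides with the paper's (circle method with a smooth weight, Parseval on the minor arcs, M{\"o}bius over the common divisor for primitivity, and the correct target $\sup_{\alpha\in\fm}|F(\alpha)|\ll H^{1-2/\kappa-\epsilon}$), but several steps as written would fail, and they are exactly where the paper's real work lies. First, a setup error: if all three variables carry the same weight $w$ supported on $[H/2,H]$, then $X+Y\ge H\ge Z$ and the weighted count is empty -- the singular integral $\int w(t_1)w(t_2)w(t_1+t_2)\,dt_1\,dt_2$ vanishes; the paper instead takes $\Phi$ compactly supported in $(0,1)$ minorizing $\chi_{[0,1]}$. More seriously, your major-arc evaluation treats $y$-smooth numbers as if they equidistribute like all integers. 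In the regime $y=(\log H)^{\kappa}$ the proportion of $n\in\sS(y)$, $n\le H$, divisible by $d$ is about $d^{-c}$ with $c=c(H,y)\approx 1-1/\kappa<1$ (Theorem~\ref{th93}), not $1/d$, so the residue classes with $(r,q)>1$ carry more than their na{\"\i}ve share and $F(a/q)$ is \emph{not} $\approx {\widehat w}(0)\mu(q)/\phi(q)$. The correct local factor for the $y$-smooth part $q_0$ of $q$ is $q_0^{-c}\prod_{p\mid q_0}\bigl(1-\frac{p^c-1}{p-1}\bigr)$ -- at a prime $p\le y$ this even has the opposite sign from $\mu(p)/\phi(p)$ -- with $\mu(q_1)/\phi(q_1)$ only for the part built from primes $>y$; correspondingly the singular series is the $c$-deformed $\fS_f(c,y)$ of \eqn{2.3} and the singular integral acquires the factor $(t_1t_2(t_1+t_2))^{c-1}$. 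Obtaining this, and even evaluating the $q=1$ term $\sum_{n\in\sS(y)}e(\gamma n)w(n)$ on a major arc, requires the Hildebrand--Tenenbaum saddle-point machinery, which your proposal never invokes.

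Second, the GRH input you quote is not available off the shelf: the twisted generating function of $\sS(y)$ is the partial Euler product $L(s,\chi;y)=\prod_{p\le y}(1-\chi(p)p^{-s})^{-1}$, which is not an $L$-function, and GRH for $L(s,\chi)$ says nothing directly about it. The paper's central technical step (Proposition~\ref{le43}, proved via the explicit formula for $\sum_{n\le u}\Lambda(n)\chi(n)n^{-it}$) is the Lindel{\"o}f-type bound $L(s,\chi;y)\ll (q|s|)^{\epsilon}$ on ${\rm Re}(s)\ge\frac12+\epsilon$; both your claimed $O(H^{1/2+\epsilon})$ equidistribution and your mean-square bound for the $E_r$ would have to be deduced from it, so as stated they are unsupported. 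Moreover, extending from $\alpha=a/q$ to $\alpha=a/q+\gamma$ by partial summation loses a factor $(1+|\gamma|H)$, which with Dirichlet denominators $q\le\sqrt H$ ruins the estimate: e.g.\ $q\approx H^{1/4}$, $|\gamma|\approx H^{-3/4}$ gives only about $H^{7/8+\epsilon}$, worse than $H^{1-2/\kappa}$ for every $\kappa<16$. One needs square-root cancellation in the $\gamma$-aspect as well, which the paper extracts from the transform $\check{\Phi}(s,\gamma x)$ via Plancherel (Lemmas~\ref{le33}--\ref{le34}). With square-root savings in both aspects the uniform minor-arc bound is $H^{3/4+\epsilon}$, and the true origin of the threshold is the comparison $H^{3/4+\epsilon}\Psi(H,y)\ll \Psi(H,y)^3/H$, i.e.\ $\Psi(H,y)\gg H^{7/8+\epsilon}$, i.e.\ $\kappa>8$ -- not the balance you describe between a major-arc range $q\le H^{1/2-1/\kappa}$ and $H^{1-2/\kappa}$.
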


This result is an immediate consequence of the following  stronger result, 
which gives a lower bound for the number of  primitive solutions in this range.\smallskip

%
%

\begin{theorem}~\label{th13}
{\em (Counting Primitive Smooth Solutions)}
Assume the truth of the Generalized Riemann Hypothesis (GRH). Then for each fixed
$\kappa > 8$ the number of primitive integer solutions $N^{\ast}(H, \kappa)$ 
 to $X+Y=Z$ with $0\le X, Y, Z \le H$ and such that the largest prime factor of $XYZ$ is 
$< (\log H)^{\kappa}$ satisfies
\beql{131}
N^{\ast}(H, \kappa) \ge  {\fS}_{\infty}\Big(1-\frac{1}{\kappa}\Big) 
{\fS}_{f}^*\Big(1-\frac{1}{\kappa}, (\log H)^{\kappa}\Big) 
\frac{\Psi(H, (\log H)^{\kappa})^3}{H}(1 + o(1)),
\eeq
as $H \to \infty$. Here the ``archimedean singular series" (more properly, 
``singular integral") ${\fS}_{\infty}(c)$ is defined, for $c>\frac 13$, by 
\beql{132}
{\fS}_{\infty}(c):= c^3 \int_{0}^{1} \int_{0}^{1-t_1} 
(t_1 t_2 (t_1+t_2))^{c-1} dt_1 dt_2,
\eeq
and the ``primitive non-archimedean singular series" $\fS_f^*(c,y)$ is defined by 
\beql{133}
{\fS}_{f}^*(c,y):= \prod_{p\le y} 
\Big( 1+ \frac{p-1}{p(p^{3c-1}-1)} \Big(\frac{p-p^c}{p-1}\Big)^3 \Big)\Big(1-\frac{1}{p^{3c-1}}\Big)
\prod_{p>y}\Big(1 - \frac{1}{(p-1)^2}\Big).
\eeq
\end{theorem}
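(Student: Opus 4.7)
The plan is to prove Theorem~\ref{th13} by the Hardy--Littlewood circle method applied to the smooth exponential sum
$$S(\alpha) := \sum_{\substack{n \le H \\ n \in \sS((\log H)^{\kappa})}} e(n\alpha),$$
for which
$$N(H,\kappa) \;=\; \int_0^1 S(\alpha)^2\,\overline{S(\alpha)}\, d\alpha$$
counts all (not necessarily primitive) smooth triples $(X,Y,Z) \in [1,H]^3$ with $X+Y=Z$. Primitivity is extracted by Möbius inversion on $d = \gcd(X,Y,Z)$: any such common divisor is automatically $(\log H)^{\kappa}$-smooth, so
$$N^*(H,\kappa) \;=\; \sum_{d \in \sS((\log H)^{\kappa})} \mu(d)\, N(H/d,\kappa).$$
Summing $\mu(d)/d^{3c-1}$ over $y$-smooth $d$, with $c = 1 - 1/\kappa$, produces precisely the Möbius factor $\prod_{p \le y}(1 - p^{-(3c-1)})$ appearing in $\fS_f^*(c,y)$.

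The next step is the standard dissection of $[0,1)$ into major arcs $\fM$ about rationals $a/q$ with $q \le Q = (\log H)^A$ for a suitable $A$, and minor arcs $\fm$. On a major arc one writes $\alpha = a/q + \beta$ and splits $S(a/q+\beta)$ according to the residue of $n$ modulo $q$. The decisive analytic input is GRH, which governs the distribution of $y$-smooth numbers in arithmetic progressions: for $q$ polylogarithmic in $H$ and $\gcd(b,q)=1$,
$$\Psi(x,y;q,b) \;=\; \frac{1}{\phi(q)}\,\Psi_q(x,y)\bigl(1 + O_A((\log H)^{-A})\bigr),$$
with an analogous formula when $\gcd(b,q) > 1$ obtained by separating out the $q$-part of $n$. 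Collecting the resulting character sums and summing over $a$ and $q$ yields the Euler factor at each $p \le y$ in $\fS_f^*$, in which the cubic expression $\bigl((p-p^{c})/(p-1)\bigr)^{3}$ reflects the limiting distribution of $v_p(n)$ among smooth $n$; the $\beta$-integration, via partial summation applied to $\Psi(\cdot,y)$, produces the singular integral $\fS_{\infty}(c)$ after rescaling by $\Psi(H,y)^3/H$. The factor $1 - 1/(p-1)^2$ at primes $p > y$ is simply the density of solutions to $X+Y=Z$ modulo $p$ with $X,Y,Z$ all nonzero mod $p$, a condition that is automatic for $y$-smooth summands.

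The main obstacle is the minor-arc bound, and this is where the restriction $\kappa > 8$ enters. By Parseval, $\int_0^1 |S(\alpha)|^2\, d\alpha = \Psi(H,y)$, so
$$\int_{\fm} |S(\alpha)|^3\, d\alpha \;\le\; \Bigl(\sup_{\alpha\in\fm} |S(\alpha)|\Bigr)\,\Psi(H,y),$$
and since $\Psi(H,y) = H^{1-1/\kappa+o(1)}$, one needs a pointwise bound of the shape
$$\sup_{\alpha\in\fm} |S(\alpha)| \;=\; o\bigl(H^{1-2/\kappa}\bigr).$$
The strategy is to apply a Vaughan-type decomposition to split $S(\alpha)$ into Type~I (linear) and Type~II (bilinear) sums. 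Type~I sums are handled by the Dirichlet approximation available on $\fm$; Type~II sums are bounded by Cauchy--Schwarz combined with the GRH-based equidistribution of smooth numbers in progressions described above. The only natural splitting parameter for a $y$-smooth sum is $y = (\log H)^{\kappa}$ itself, so the Type~I and Type~II savings are tied together, and balancing them against the required savings $H^{1-2/\kappa}$ produces the admissible range $\kappa > 8$. I expect the careful execution of this balancing, together with tracking the $q$-dependence in the GRH equidistribution estimates uniformly over the minor-arc parameters, to be the technically most demanding step.

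Finally, assembling the major-arc main term with the $o(\Psi(H,y)^3/H)$ minor-arc error and substituting into the Möbius sum in $d$ (which converges absolutely for $c = 1 - 1/\kappa > 7/8 > 1/3$, matching the range of convergence of $\fS_\infty(c)$ and of the Euler product defining $\fS_f^*$) yields the asserted lower bound \eqn{131}, completing the proof modulo the technical estimates.
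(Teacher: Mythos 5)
Your overall skeleton (circle method for the full count, then Möbius over the gcd to extract primitivity) matches the paper's strategy, but the step that actually carries the theorem is missing. Your minor-arc treatment is the gap: you assert that a Vaughan-type Type~I/Type~II decomposition plus Cauchy--Schwarz plus GRH-equidistribution of smooth numbers in progressions of polylogarithmic modulus will give $\sup_{\alpha\in\fm}|S(\alpha)| = o(H^{1-2/\kappa})$, with ``$\kappa>8$'' emerging from balancing. But no mechanism is given, and none of the ingredients you name applies on the minor arcs: there the Dirichlet approximation $\alpha = a/q+\beta$ can have $q$ as large as $\sqrt H$ (or $\beta$ large), where equidistribution modulo $q\le(\log H)^A$ says nothing, and for $y=(\log H)^{\kappa}$ there is no standard bilinear structure in the $y$-smooth numbers that yields square-root--type cancellation. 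In the paper the exponent $8$ has a concrete source: the additive character $e(n\alpha)$ is decomposed into Dirichlet characters $\bmod\ q$ and the continuous characters $n^{it}$ (via the transform $\check\Phi(s,\lambda)$), GRH gives Lindel{\"o}f-type bounds for the partial Euler products $L(s,\chi;y)=\prod_{p\le y}(1-\chi(p)p^{-s})^{-1}$ on ${\rm Re}(s)=\tfrac12+\epsilon$, and the square-root losses in the $q$- and $t$-aspects combine to give the pointwise bound $E(x,y;\alpha)\ll x^{3/4+\epsilon}$ off the major arcs; the condition $\Psi(x,y)\gg x^{7/8+\epsilon}$, i.e. $\kappa>8$, is exactly what makes $x^{3/4+\epsilon}\Psi(x,y)=o(\Psi(x,y)^3/x)$. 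Without an argument of comparable strength your plan does not reach $\kappa>8$, and your own description concedes the balancing is unresolved.

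Two further points would need repair even granting the minor arcs. First, on the major arcs, in the range $y=(\log x)^{\kappa}$ the asymptotics of $\Psi(x,y)$, of $\Psi(x,y;q,b)$, and of the local factors are themselves delicate; ``partial summation applied to $\Psi(\cdot,y)$'' is not available off the shelf, and the paper has to run the Hildebrand--Tenenbaum saddle-point analysis (the contour through $c(x,y)$, Lemmas~\ref{le91}--\ref{le93}, and the local law $\Psi(kx,y)\approx k^{c}\Psi(x,y)$) to produce both the singular integral and the Euler factors. Second, your complete M{\"o}bius inversion $N^*(H,\kappa)=\sum_{d\in\sS(y)}\mu(d)N(H/d,\kappa)$ requires the asymptotic formula for $N(H/d,\kappa)$ uniformly for $d$ up to size $H$, which you do not have (and the effective $\kappa$ degrades as $d$ grows); the paper avoids this by sieving only over squarefree $d\mid P_z$ with $z=\tfrac12\log y$ and bounding the triples whose gcd has a prime factor in $(z,y]$ by an upper-bound count, which is where the truncated product recombines into $\fS_f^*$ only up to an admissible error. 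Finally, note the paper proves the unweighted Theorem~\ref{th13} only as a lower bound, by applying the weighted Theorem~\ref{th121} to a smooth minorant of $\chi_{[0,1]}$; your sharp-cutoff asymptotic would be strictly stronger than what is established, so for the stated inequality you should likewise pass through a smoothed count.
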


We expect that the lower bound given by the right side of  \eqn{131} should give an  asymptotic
formula for $N^{*}(H, \kappa)$ in this range of $\kappa$, 
and that proving this should be accessible by elaboration of the methods of this paper.
The estimate (\ref{131}) is in accordance with the heuristic \eqn{109aa}
which would have predicted a main term of $\Psi(H,(\log H)^{\kappa})^3/H$.  
In the range $\kappa >8$ we see that the main term in (\ref{131}) differs from the 
heuristic only by the factor $\fS_\infty(1-1/\kappa) \fS_f^*(1-1/\kappa, (\log H)^{\kappa})$.
An argument below  shows this factor is bounded away from $0$ and $\infty$,
and for fixed $\kappa$ it approaches 
a constant (depending on $\kappa$) as $H \to \infty$.
As $\kappa\to \infty$, this constant factor approaches $\tfrac 12$, and the main term 
$\tfrac 12 \Psi(H, (\log H)^{\kappa})^3/H$ is the expected number of solutions to 
$X+Y=Z$ when $X$, $Y$ and $Z$ are drawn from a random subset of $[1,H]$ 
with cardinality $\Psi(H,(\log H)^\kappa)$.   Thus our  heuristic is very accurate in the range 
$\kappa \to \infty$.  

%

The ``main term" on the right side of \eqn{131} is
 well-defined   in  the range $\kappa> \frac{3}{2}$ where the heuristic above is expected to apply.
 Here $\kappa> \frac{3}{2}$ corresponds to $c> \frac{1}{3}$,
 and the  ``archimedean singular integral"  \eqn{132}
 defines  an analytic function on the half-plane Re$(c)> \frac{1}{3}$ which
 diverges at $c=\frac{1}{3}$, while 
 the   ``non-archimedean singular series"   $\fS_{f}^{*}(c, y)$ is well-defined for all $c>0$.
 The archimedean singular series  is uniformly
 bounded on any half-plane Re$(c)> \frac{1}{3} +\epsilon.$ 
 For the non-archimedean singular series, we find that its limiting
  behavior as $y= (\log H)^{\kappa} \to \infty$
  changes at the threshold value $\kappa=2$, 
  corresponding to $c = \frac{1}{2}$. Namely, one has
 \beql{135}
 \lim_{H \to \infty} {\fS}_{f}^{\ast}(1-\frac{1}{\kappa}, (\log H)^{\kappa}) = 
 \left\{ 
\begin{array}{ll} {\fS}_{f}^{\ast}(1-\frac{1}{\kappa}) &~~\mbox{for}~~ \kappa>2,\\
~&~\\
0 &~~\mbox{for} ~~ 0<\kappa \le 2,
\end{array}
\right.
\eeq
 where for $c> \frac{1}{2}$ we set
 \beql{321a}
 \fS_{f}^{\ast}(c) := \prod_{p} \Big( 1 + \frac{1}{p^{3c-1}}\Big( \frac{p-1}{p}\Big(\frac{p-p^c}{p-1}\Big)^3 -1\Big)\Big).
 \eeq
 (This follows from \eqn{133}).
 The Euler product \eqn{321a} converges absolutely
 and defines an analytic function ${\fS}_{f}^{\ast}(c)$
 on the
 half-plane Re$(c) > \frac{1}{2}$; 
 this function is uniformly bounded on any
 half-plane Re$(c)> \frac{1}{2}+\epsilon$, 
 Furthermore for values  corresponding the
 $2 \le \kappa < \infty$ (i.e. $\frac{1}{2} < c < 1$) the ``non-archimedean
 singular series"  $\fS_{f}(c, y)$ remains bounded away from $0$. We conclude that for $2 < \kappa < \infty$
 the ``main term" estimate for $N^{\ast}(H, \kappa)$
 agrees with the prediction of the heuristic argument given earlier.
 In the region $1 < \kappa\le 2$, 
  although \eqn{321a} gives $\fS_{f}^{\ast}(1-\frac{1}{\kappa}, (\log H)^{\kappa}) \to 0$
 as $H \to \infty$, nevertheless one can show
 \beql{323a}
  {\fS}_{f}^{\ast}(1-\frac{1}{\kappa}, (\log H)^{\kappa}) \gg \exp ( -(\log H)^{2-\kappa}). 
    \eeq
 This bound implies that $\fS_{f}^{\ast}(1-\frac{1}{\kappa}, (\log H)^{\kappa}) \gg H^{-\epsilon}$
for any $\epsilon>0$. A consequence is that  for $\frac{3}{2} < \kappa \le 2$ 
 the ``main term" on the right side of \eqn{132} is still of the same order $H^{2- \frac{3}{\kappa} +o(1)}$ as the
heuristic predicts.
  Thus  it could still  be the case that this  ``main term" 
  gives a correct order of magnitude estimate  for $N^{\ast}(H, \kappa)$ even in this range.

Next we compare the number $N^{\ast}(H, \kappa)$ of primitive smooth solutions
with the total number $N(H,\kappa)$ of smooth solutions below $H$. Now
$N(H, \kappa)$ already has a 
 contribution coming from smooth multiples of the solution $(X, Y, Z) =(1, 1, 2)$ that gives
 \beql{185}
 N(H, \kappa) \ge \Psi( \tfrac{1}{2} H, (\log H)^{\kappa}) \ge H^{1- \frac{1}{\kappa} +o(1)}, ~~\mbox{as}~~H \to \infty.
 \eeq
 For  $1 \le \kappa <2$ 
 this  lower bound exceeds the heuristic estimate $H^{2- \frac{3}{\alpha} +o(1)}$
 for  $N^{\ast}(H, \kappa)$   by a positive power of $H$. 
 It follows that the heuristic given for primitive smooth solutions
 should not apply  to  smooth solutions $N(H, \kappa)$ for $1 < \kappa < 2$,
 and furthermore it indicates that on this range the density of  
   primitive smooth solutions  in the set of
  all smooth solutions below $H$ will approach zero as $H \to \infty$.

We may  consider  for more general $\kappa$ the limiting behavior 
as $H \to \infty$ of the
relative density of primitive smooth solutions.
Here we conjecture there is a threshold value at $\kappa =3$
where this behavior changes qualitatively.\smallskip

%
%

\begin{cj}\label{cj14}
({\em Relative Density of Primitive Solutions})
There holds
\beql{370}
 \lim_{H \to \infty} \frac{ N^{\ast}(H, \kappa)}{N(H, \kappa)}= 
 \left\{ 
\begin{array}{ll} \frac{1}{\zeta(2- \frac{3}{\kappa})},& ~~~\mbox{for}~~ 3 < \kappa < \infty,\\
~&~\\
0 &~~\mbox{for} ~~ 1<\kappa \le 3.
\end{array}
\right.
\eeq
\end{cj}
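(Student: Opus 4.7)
The plan is to decompose every smooth solution counted by $N(H,\kappa)$ according to its greatest common divisor. Writing $y = (\log H)^{\kappa}$ and letting $\sS(y)$ denote the set of $y$-smooth positive integers, any triple $(X,Y,Z)$ with $X+Y=Z$, $\max(X,Y,Z)\le H$, and all primes dividing $XYZ$ below $y$, factors uniquely as $d\cdot (X',Y',Z')$ where $d=\gcd(X,Y,Z)\in \sS(y)$ and $(X',Y',Z')$ is a primitive $y$-smooth solution with maximum at most $H/d$. Hence
\[
N(H,\kappa) \;=\; \sum_{\substack{d \le H \\ d\in \sS(y)}} N^{\ast}\!\bigl(H/d,\, (\log H)^{\kappa}\bigr),
\]
where, overloading notation slightly, $N^{\ast}(x,y)$ denotes the number of primitive solutions with maximum below $x$ and all prime factors of $X'Y'Z'$ below $y$. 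The whole conjecture will be reduced to summing an asymptotic for $N^{\ast}$ in this identity.

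The key input I would need is a strengthening of Theorem~\ref{th13} from a lower bound to an honest asymptotic of the same shape, valid uniformly for $x \in [H^{\eta},H]$ with $y=(\log H)^{\kappa}$ held fixed. Granting this, when $\log d = o(\log H)$ one has $\log\log(H/d) = \log\log H + o(1)$, so both singular-series factors $\fS_{\infty}(1-1/\kappa)$ and $\fS_{f}^{\ast}(1-1/\kappa,y)$ are essentially constant in $d$, while the smooth-number estimate $\Psi(H/d,y) = (H/d)^{1-1/\kappa+o(1)}$ of \eqref{109a} yields
\[
N^{\ast}\!\bigl(H/d,\, y\bigr) \;=\; N^{\ast}(H,\kappa)\cdot d^{-(2-3/\kappa)+o(1)}\,(1+o(1))
\]
uniformly in $d$ throughout this range.

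Inserting this into the gcd decomposition, the dichotomy of \eqref{370} is controlled entirely by whether $\sum_{d} d^{-(2-3/\kappa)}$ converges. For $\kappa>3$ the exponent exceeds $1$, the series converges absolutely, and since $\sS(y)$ exhausts $\NN$ as $H\to\infty$ the sum tends to $\zeta(2-3/\kappa)$; hence
\[
N(H,\kappa) \;\sim\; \zeta\!\bigl(2-\tfrac{3}{\kappa}\bigr)\, N^{\ast}(H,\kappa),
\]
which is the first case of \eqref{370}. For $1<\kappa\le 3$ the exponent is at most $1$; truncating the inner sum at some $D=H^{\eta}$ inside the uniform regime, the partial sum $\sum_{d\le D,\, d\in \sS(y)} d^{-(2-3/\kappa)}$ diverges as $H\to\infty$ (polynomially for $\kappa<3$ and logarithmically at $\kappa=3$), so $N(H,\kappa)/N^{\ast}(H,\kappa)\to\infty$ and the ratio in \eqref{370} tends to $0$. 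In the subrange $1<\kappa<2$ an even cruder argument suffices: the multiples of $(1,1,2)$ alone give $N(H,\kappa)\gg H^{1-1/\kappa+o(1)}$, while the heuristic bound $N^{\ast}(H,\kappa)\ll H^{2-3/\kappa+o(1)}$ has strictly smaller exponent for $\kappa<2$, and the endpoint $\kappa=2$ is closed using the decay of $\fS_{f}^{\ast}$ recorded in \eqref{135}.

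The principal obstacle is therefore the first ingredient: an asymptotic (not merely the lower bound of Theorem~\ref{th13}) must be extended from $\kappa>8$ down to the threshold $\kappa>3$ where the gcd sum converges, and a matching upper bound of the heuristic order $\Psi(H,y)^3/H$ is needed throughout $1<\kappa\le 3$ to control the second case. This appears substantially harder than the original theorem because $\fS_{\infty}$ is singular at $\kappa=3/2$ while $\fS_{f}^{\ast}$ degenerates at $\kappa=2$, so the GRH-based exponential-sum inputs underlying Theorem~\ref{th13} would have to be sharpened considerably, or supplemented by a primitivity sieve, to reach these smaller values of $\kappa$. Once such an asymptotic is in hand, the gcd decomposition and the convergence threshold $2-3/\kappa=1$ deliver both cases of \eqref{370} mechanically.
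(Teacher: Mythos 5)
The statement you are addressing is a \emph{conjecture}: the paper does not prove \eqn{370} and offers only supporting evidence, namely the weighted analogue for $\kappa>8$ under GRH (Theorem \ref{th23a}), the Euler-product identity \eqn{423} relating $\fS_f^{\ast}$ and $\fS_f$ through $1/\zeta(3c-1)$, and the lower bound \eqn{185} coming from smooth multiples of $(1,1,2)$ in the range $1<\kappa\le 2$. Your proposal does not close this gap: its central input is an asymptotic formula for $N^{\ast}(x,(\log H)^{\kappa})$, uniform for $x\in[H^{\eta},H]$, valid for every $\kappa>3$, together with an upper bound of the heuristic order $\Psi(H,y)^3/H$ throughout $1<\kappa\le 3$. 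Neither is available: even assuming GRH the paper reaches only $\kappa>8$, only for smoothly weighted counts $N^{\ast}(x,y;\Phi)$, and for the sharp cutoff it obtains just the lower bound of Theorem \ref{th13}. Extending an asymptotic (and a matching upper bound) down to $\kappa>3$, past the degeneration of $\fS_f^{\ast}$ at $\kappa=2$ and the singularity of $\fS_\infty$ at $\kappa=\tfrac32$, is an open problem at least as hard as the conjecture itself, so your argument is a conditional reduction rather than a proof. You concede this in your final paragraph, but it means the attempt cannot be counted as establishing \eqn{370}.

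That said, the reduction itself is sound and is consistent with the structure the paper actually uses. Your gcd decomposition $N(H,\kappa)=\sum_{d\in\sS(y),\,d\le H}N^{\ast}(H/d,y)$ is precisely the inverse of the M{\" o}bius/inclusion--exclusion sieve of \S 8 by which the paper deduces Theorem \ref{th121} from Theorem \ref{th21}; the local behavior $\Psi(x/d,y)\approx d^{-c}\Psi(x,y)$ (Theorem \ref{th93}) is what makes each $N^{\ast}(H/d,y)$ scale like $d^{-(2-3/\kappa)+o(1)}N^{\ast}(H,\kappa)$, and the resulting factor $\sum_d d^{-(2-3/\kappa)}\to\zeta(2-\tfrac3\kappa)$ is exactly the content of \eqn{423}, which is how the paper itself explains the constant in \eqn{370}. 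Two smaller technical points you would still need to handle even granting the conjectural asymptotic: the tail $d>H^{\eta}$ of the gcd sum must be shown to contribute $o(N^{\ast}(H,\kappa))$ (a crude bound on $N^{\ast}(x,y)$ for small $x$ suffices when $\kappa>3$, but it should be stated), and at the endpoint $\kappa=2$ the appeal to \eqn{135} only shows $\fS_f^{\ast}\to 0$, which by itself does not yield the needed upper bound on $N^{\ast}(H,2)$ relative to $N(H,2)$ without the unproven matching upper bound you already flagged.
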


 As evidence in favor of  this conjecture,
 Theorem \ref{th23a} below  shows, assuming GRH, that 
 a weighted version of this conjecture holds 
 for $\kappa>8$.
  Further evidence is the fact that  for each $\kappa > 3$ 
  the ratios of the conjectured ``main terms" in the 
 asymptotic formulas for these quantities have the limiting value $\zeta(2-\frac{3}{\kappa})$
 as $H \to \infty$, a
 result implied by \eqn{423} below. Finally, the discussion above
 gives support for its truth on the range $1 < \kappa \le 2$.

In \S2 we describe the main technical results from which the theorems above are derived. 
 Our main estimate (Theorem~\ref{th21}) gives an asymptotic formula with error term 
which counts weighted (primitive and imprimitive) integer solutions to the 
$\XYZ$-equation in the 
range $\kappa> 8$.
This result will be established using the  Hardy-Littlewood method (\cite{Va97})
combined with the Hildebrand-Tenenbaum saddle point method (\cite{HT86}, \cite{Hi86}, \cite{HT93}) for estimating the size of  $\Psi(x, y)$.  We then derive a
weighted count of primitive solutions
(Theorem~\ref{th121}) using 
inclusion-exclusion.  Theorem ~\ref{th13}  is deduced from Theorem \ref{th121}. 
  It would 
be interesting to see whether our main results could be made unconditional.  At the moment, 
the best known unconditional results are due to Balog and Sark{\" o}zy 
\cite{BS84a}, \cite{BS84b} who showed (in a closely related problem) for any large $N$,
 there are solutions to $X+Y+Z=N$ with the largest prime factor of $XYZ$ being smaller 
 than $\exp(3\sqrt{\log N \log \log N})$.

Our problem may also be viewed as a special case of the $S$-unit equation. 
Given a finite set of primes $S$, one can consider relatively prime
solutions to the $S$-unit equation $X+Y=Z$ 
where all prime factors of $XYZ$ are in the 
set $S$.  In 1988 Erd\H{o}s, Stewart and Tijdeman \cite{EST88} showed the existence
of collections of primes $S$ with $|S|=s$ such
that the $S$-unit equation $X+Y= Z$ has ``exponentially many''
solutions, namely at least 
$\exp((4- \epsilon) s^{\frac{1}{2}} (\log s)^{-\frac{1}{2}} )$
solutions, for $s \ge s_0(\epsilon)$ sufficiently large.
Recently Konyagin and the second author \cite{KS07}
improved this construction, to show that there exist $S$ such
that the 
$S$-unit equation has at least
$\exp(s^{2- \sqrt{2}- \epsilon})$ solutions.  In the other 
direction, Evertse \cite[Theorem 1]{Ev84} has shown that the number of solutions to the 
$S$-unit equation is at most $3 \times 7^{2s+3}$.

In the constructions above  the sets of primes  $S$ were tailored to have 
large numbers of solutions. However the  simplest set of such primes to 
consider is the initial segment of primes 
$S =\sP(y) :=\{ p: p ~\mbox{prime}~, p \le y \}.$
Erd\H{o}s, Stewart and Tijdeman conjectured (\cite[p. 49, top]{EST88}) 
that a similar property should hold in this case,
asserting  that for $s= |S|$ and each $\epsilon>0$
there should be at least $\exp( s^{\frac{2}{3} - \epsilon})$
$S$-unit solutions to $X+Y=Z$ and at most $\exp( s^{\frac{2}{3} + \epsilon})$
such solutions, for all $s > s_0(\epsilon).$  Their conjecture was motivated by a 
heuristic similar to the one given above for the strong $\XYZ$-conjecture.

As an easy
consequence of
Theorem~\ref{th13} we deduce, conditional on $GRH$,  a weak form of this conjecture,
at the end of \S2. \smallskip
%
%
\begin{theorem}~\label{th15}
Assume the truth of the Generalized Riemann Hypothesis (GRH).
Let $S$ denote  the first $s$ primes, and
let $N(S)$ count the number of 
primitive  solutions  $(X,Y, Z)$ 
to the $S$-unit equation $X+Y=Z.$
Then for each $\epsilon>0$, we have 
$N(S) \gg_{\epsilon}  \exp(s^{\frac{1}{8}-\epsilon})$. 
\end{theorem}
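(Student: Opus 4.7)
The plan is to deduce Theorem~\ref{th15} directly from Theorem~\ref{th13} by choosing $H$ as a function of $s$ so that every primitive smooth solution produced by Theorem~\ref{th13} is automatically an $S$-unit solution. Let $S=\{p_1,\dots,p_s\}$ be the first $s$ primes; by the prime number theorem $p_s\sim s\log s$, and in particular $p_s\ge s$ for all sufficiently large $s$. Given $\epsilon>0$, fix $\kappa=8+\delta$ with $\delta>0$ small enough that $1/(8+\delta)>1/8-\epsilon$, and define $H=H(s)$ by $(\log H)^{\kappa}=p_s$, so that $\log H=p_s^{1/\kappa}$.

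With this choice, any primitive triple $(X,Y,Z)$ counted by $N^{\ast}(H,\kappa)$ has every prime factor of $XYZ$ strictly less than $(\log H)^{\kappa}=p_s$, hence contained in $S$; consequently $N(S)\ge N^{\ast}(H,\kappa)$. Applying Theorem~\ref{th13} (under GRH) gives
\[
N^{\ast}(H,\kappa)\ \ge\ {\fS}_{\infty}\!\bigl(1-\tfrac{1}{\kappa}\bigr)\,{\fS}_{f}^{\ast}\!\bigl(1-\tfrac{1}{\kappa},(\log H)^{\kappa}\bigr)\,\frac{\Psi(H,(\log H)^{\kappa})^3}{H}\bigl(1+o(1)\bigr).
\]
By \eqn{109a} we have $\Psi(H,(\log H)^{\kappa})=H^{1-1/\kappa+o(1)}$, so the ratio $\Psi(H,(\log H)^{\kappa})^3/H$ is $H^{2-3/\kappa+o(1)}$. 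Since $\kappa>8>2$, the discussion surrounding \eqn{135} shows that the archimedean and non-archimedean singular-series factors tend to finite, strictly positive limits, so they may be absorbed into the $o(1)$. We therefore obtain
\[
N(S)\ \gg\ H^{2-3/\kappa+o(1)}\ =\ \exp\!\bigl((2-3/\kappa+o(1))\,p_s^{1/\kappa}\bigr).
\]

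Using $p_s\ge s$ and our choice of $\kappa$, we have $p_s^{1/\kappa}\ge s^{1/\kappa}\ge s^{1/8-\epsilon}$ for $s$ large. Since $2-3/\kappa>13/8>1$, the prefactor in the exponent only helps, and the $o(1)$ correction can be absorbed by an arbitrarily small further loss in $\epsilon$. This yields the desired lower bound $N(S)\gg_{\epsilon}\exp(s^{1/8-\epsilon})$.

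The argument is essentially formal once Theorem~\ref{th13} is in hand. The only points of care are (i) calibrating $H$ so that the smoothness condition $(\log H)^{\kappa}<p_s$ in Theorem~\ref{th13} translates into the $S$-unit condition, and (ii) tracking the mild loss incurred in passing from the exponent $1/\kappa$ back down to $1/8-\epsilon$, which is possible precisely because the target statement permits arbitrary $\epsilon>0$. There is no substantive analytic obstacle at this stage; the hard work of producing many primitive smooth solutions with sharp enough asymptotic control has already been carried out in Theorem~\ref{th13}.
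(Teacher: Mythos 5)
Your proposal is correct and follows essentially the same route as the paper: both deduce the result directly from Theorem~\ref{th13} by calibrating $H$ against $s$ (the paper takes $H=\exp(s^{1/8-\epsilon})$ with $\kappa=8+\epsilon$ and checks $(\log H)^{\kappa}<s\le p_s$, while you take the essentially equivalent, slightly more generous normalization $(\log H)^{\kappa}=p_s$ and convert back using $p_s\ge s$). The bookkeeping with the singular series being bounded away from $0$ and $\infty$ and the exponent $2-3/\kappa>1$ matches the paper's argument, so there is no gap.
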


 The approach  in this paper will apply to other  linear  additive problems involving smooth numbers.  For instance, one can treat smooth solutions  of
homogeneous linear ternary Diophantine equations $aX+bY+cZ=0$ 
with arbitrary integer coefficients $(a, b, c)$. 
One may also impose congruence side conditions on the prime factors allowed, for
example smooth solutions with 
all prime factors $p \equiv 1~\bmod 4$. In this situation there may occur local 
congruence obstructions to existence of solutions, and naturally the 
singular series must be modified to take such features into account. 
It would also be of interest to extend the $\XYZ$-conjecture to solutions of
$X+Y=Z$ in  algebraic number fields, or to algebraic function fields 
over finite fields.  Finally, it would be interesting to see if analogues of Waring's problem 
using very smooth numbers could be established.  This has been treated by Harcos \cite{Har97}, 
who obtained unconditional results for Waring's  problem in the smoothness 
range corresponding to the results of Balog and Sark{\" o}zy mentioned earlier.

%
%
%
%
\section{Counting Smooth Solutions: Main Technical Results}
\setcounter{equation}{0}

\noindent Let $x$ and $y$ be large.  Our aim is to count solutions to 
$X+Y=Z$ with $X$, $Y$ and $Z$ being pairwise coprime $y$-smooth integers 
lying below $x$.  We shall simplify the problem by first counting all solutions, primitive and imprimitive, 
to $X+Y=Z$ with $X$, $Y$ and $Z$ being $y$-smooth integers 
up to $x$.  We shall also find it convenient to replace the 
sharp cut-off of being less than $x$ by counting solutions with suitable weights approximating 
the sharp cut-off.   Once this is achieved, a sieve argument will enable us 
to recover primitive solutions from all solutions.

More formally, let $\Phi(x)\in C_{c}^{\infty}(\RR^+)$ be a smooth,
compactly supported, real-valued function on the positive real axis.  We shall 
develop first an asymptotic formula for 
\beql{304}
N(x, y; \Phi)
:= ~~~\sum_{{X, Y, Z \in  \sS(y)}\atop{X+Y=Z}} 
\Phi\Big(\frac{X}{x}\Big) \Phi\Big(\frac{Y}{x}\Big) \Phi\Big(\frac{Z}{x}\Big),  
\eeq
which counts weighted primitive and imprimitive solutions.  \smallskip 

 \begin{theorem}~\label{th21}
{\em (Weighted Smooth Integer Solutions Count)} 
Assume the truth of the GRH.  Let $\Phi$ be a fixed smooth, compactly supported, real 
valued function in  $ C_c^{\infty}(\RR^{+})$.  Let $x$ and $y$ be large, with 
$(\log x)^{8+\delta} \le y \le \exp((\log x)^{\frac{1}{2}-\delta})$ for some $\delta >0$.    Define 
$\kappa$ by the relation $y=(\log x)^{\kappa}$.  
Then, we have 
\beql{552}
N(x, y; \Phi)
=\fS_{\infty}\Big( 1-\frac{1}{\kappa},\Phi\Big)\fS_f\Big(1- \frac{1}{\kappa},y
\Big) \frac{\Psi(x,y)^3}{x} +
O_{ \delta}\Big(  \frac{\Psi(x,y)^3}{x} \frac{\log\log y}{\log y}\Big). 
\eeq
Here the 
``archimedean singular series" $\fS_{\infty}(c, \Phi)$ is 
given by 
\beql{501}
\fS_{\infty}(c,\Phi):= c^3\int_{0}^{\infty} \int_{0}^\infty
\Phi(t_1)\Phi(t_2)\Phi(t_1+t_2)\left( t_1t_2(t_1+t_2)\right)^{c-1} dt_1 dt_2,
\eeq
and the ``non-archimedean singular series" $\fS_f$ is defined by 
\beql{2.3} 
\fS_{f}(c,y) = \prod_{p\le y}
\Big( 1+ \frac{p-1}{p(p^{3c-1}-1)} \Big(\frac{p-p^c}{p-1}\Big)^3 \Big)
\prod_{p>y} 
\Big( 1-\frac{1}{(p-1)^2}\Big).
\eeq 
 \end{theorem}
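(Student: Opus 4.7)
The plan is to apply the Hardy--Littlewood circle method to the smoothly weighted exponential sum
\[
F(\alpha) := \sum_{n\in\sS(y)}\Phi(n/x)\,e(n\alpha),
\]
so that $N(x,y;\Phi)=\int_0^1 F(\alpha)^2 F(-\alpha)\,d\alpha$. Dissect the unit interval into major arcs $\fM$---a neighborhood $|\alpha-a/q|\le 1/(qQ)$ about each $a/q$ with $(a,q)=1$ and $q\le Q$---together with complementary minor arcs $\fm$, where $Q$ is a small power of $\log x$ chosen to optimize the final error term.

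On a major arc, write $\alpha=a/q+\beta$ and expand $F$ in Dirichlet characters modulo $q$,
\[
F(a/q+\beta)=\frac{1}{\phi(q)}\sum_{\chi\imod q}\tau_a(\chi)\sum_{n\in\sS(y)}\Phi(n/x)\chi(n)e(n\beta),
\]
with $\tau_a(\chi)=\sum_r\chi(r)e(ar/q)$. GRH is invoked to control the inner sums for non-principal $\chi$, via the standard contour-integral link between smooth-number character sums and Dirichlet $L$-functions; these contributions fall into the error. The principal-character part is then handled using the Hildebrand--Tenenbaum saddle-point representation of $\Psi(x,y)$, yielding the key asymptotic $F(a/q+\beta)\approx A(q,a)\cdot c\,\Psi(x,y)\int_0^\infty\Phi(t)t^{c-1}e(tx\beta)\,dt$, where $c=1-1/\kappa$ is the saddle exponent and $A(q,a)$ is a multiplicative factor encoding the residue-class distribution of smooth numbers modulo $q$.

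Substituting this approximation into $\int_\fM F^2 F(-\cdot)\,d\alpha$, the $\beta$-integration collapses through the substitution $u=x\beta$ and the Fourier identity for a product of three weighted exponentials, producing the factor
\[
\frac{\Psi(x,y)^3}{x}\cdot c^3\int_0^\infty\!\!\int_0^\infty \Phi(t_1)\Phi(t_2)\Phi(t_1+t_2)\bigl(t_1 t_2(t_1+t_2)\bigr)^{c-1}\,dt_1\,dt_2,
\]
which is exactly $\fS_\infty(c,\Phi)\Psi(x,y)^3/x$. The summation over $a\imod q$ and over $q\le Q$ of the arithmetic weight $|A(q,a)|^2 A(q,-a)/\phi(q)^3$ converges to $\fS_f(c,y)$: the local factors at $p\le y$ come from the allowed joint residue distribution of $(X,Y,Z)$ among $y$-smooth integers modulo $p$, while the tail factor $\prod_{p>y}(1-1/(p-1)^2)$ arises from the Gauss-sum evaluation at $q$'s having prime factors exceeding $y$, reflecting the automatic coprimality $p\nmid XYZ$ forced by $y$-smoothness.

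The main obstacle is the minor arc estimate. One must show $\sup_{\alpha\in\fm}|F(\alpha)|\ll\Psi(x,y)^{1-\delta}$ for a suitable $\delta>0$, so that in conjunction with Parseval's identity $\int_0^1|F|^2\,d\alpha\ll\Psi(x,y)$, the minor arc contribution is dominated by the main term of size $\Psi(x,y)^3/x\asymp x^{2-3/\kappa}$. A Vaughan-type identity decomposes $F(\alpha)$ into type-I and type-II bilinear sums over $\sS(y)$, and GRH controls the resulting inner character sums via mean-value estimates for Dirichlet $L$-functions that save roughly a factor of $q^{1/2}$ over the trivial bound. The requirement that these GRH-based bilinear estimates beat the main term is precisely what forces the hypothesis $y\ge(\log x)^{8+\delta}$, i.e.\ $\kappa>8$; any sharpening of the minor arc treatment would reduce this threshold exponent.
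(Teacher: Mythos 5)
Your circle-method skeleton and your major-arc computation (character decomposition, Hildebrand--Tenenbaum saddle point giving the factor $c\,\Psi(x,y)\int_0^\infty \Phi(t)t^{c-1}e(tx\beta)\,dt$, and the collapse of the $\beta$-integral into $\fS_\infty(c,\Phi)\Psi(x,y)^3/x$) match the paper in spirit. The genuine gap is your minor-arc plan, and it interacts badly with your choice of arcs. With major arcs only up to $q\le Q=(\log x)^{O(1)}$, the desired bound $\sup_{\alpha\in\fm}|F(\alpha)|\ll \Psi(x,y)^{1-\delta}$ is simply false: for $\alpha=a/q+\gamma$ with $q$ a $y$-smooth modulus of size just above $Q$ and $\gamma$ tiny, the principal-character term gives $|F(\alpha)|\asymp q^{-c+o(1)}\Psi(x,y)$, i.e.\ only a saving of a power of $\log x$, not a power of $\Psi$. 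Combined with Parseval this yields a minor-arc bound of size $\Psi(x,y)^2(\log x)^{-O(1)}$, which is \emph{larger} than the main term $\Psi(x,y)^3/x \asymp x^{2-3/\kappa}$, since $\Psi(x,y)=x^{1-1/\kappa+o(1)}\ll x(\log x)^{-A}$ for every $A$. So a sup-bound-times-Parseval argument cannot close with narrow major arcs. Moreover, the tool you invoke --- a Vaughan-type identity for the indicator of $\sS(y)$ with GRH-controlled bilinear (type-II) sums --- is not developed and is not how GRH naturally enters: GRH gives no direct handle on genuinely bilinear sums, and there is no off-the-shelf Vaughan identity for smooth numbers at this smoothness level.

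What the paper does instead is to take \emph{wide} major arcs, $q\le x^{1/4}$ and $|\alpha-a/q|\le x^{\delta-1}$, so that the slowly decaying contributions from moderate smooth denominators are absorbed into the main term (the sum over all $q\le x^{1/4}$ of the local factors is what produces $\fS_f(c,y)$, including the $\mu(q_1)/\phi(q_1)$ terms from $q$ with prime factors exceeding $y$ --- these cannot appear at all if $q\le Q<y$). On the complement one has, by Dirichlet approximation, either $q>x^{1/4}$ or $|\gamma|\ge x^{\delta-1}$, and in both cases the paper proves the pointwise bound $E(x,y;\alpha)\ll x^{3/4+\epsilon}$; the conditional input is a Lindel{\"o}f-type bound for the partial Euler products $L(s,\chi;y)=\prod_{p\le y}(1-\chi(p)p^{-s})^{-1}$ on ${\rm Re}(s)=\tfrac12+\epsilon$, combined with a Mellin-transform analysis of the archimedean weight $\check\Phi(s,\gamma x)$, each contributing a square-root loss (in the $q$- and $t$-aspects respectively), whence $x^{1/2}\cdot q^{1/2}(1+|\gamma|x)^{1/2}\ll x^{3/4}$. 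The threshold $\kappa>8$ then arises from comparing $x^{3/4+\epsilon}\Psi(x,y)$ (minor arcs times Parseval) with $\Psi(x,y)^3/x$, i.e.\ from needing $\Psi(x,y)\gg x^{7/8+\epsilon}$ --- not, as you suggest, from a bilinear-sum estimate. To repair your argument you would need either to adopt the wide-arc dissection and prove the $x^{3/4+\epsilon}$ bound via the GRH bounds on $L(s,\chi;y)$, or to find a genuinely different way to bound the contribution of $\alpha$ near rationals with denominators between $(\log x)^{O(1)}$ and $x^{1/4}$.
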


In our proof, 
it is convenient to restrict $\Phi$ to be compactly supported away 
from $0$.  This restriction prevents us from obtaining an asymptotic formula 
for the number  of nonnegative 
solutions to $X+Y=Z$ with $Z\le x$ and $XYZ$ being $y$-smooth, which 
corresponds to choosing $\Phi$ to be the characteristic function $\chi_{[0,1]}$ of the interval $[0,1]$.
We do expect that the asymptotic formula given in Theorem \ref{th21} will continue to hold in this 
case.   In any event this result  suffices to obtain a lower bound for 
this number of solutions by 
choosing a smooth function $\Phi$ compactly supported inside $\RR^+$ 
which minorizes the characteristic function of $[0,1]$.

%

The compact support of $\Phi(x)$ guarantees that 
the ``weighted archimedean singular series" $\fS_{\infty}(c, \Phi)$ is
defined for all real $c$. In contrast the 
``non-archimedean singular series"  $\fS_{f}(c, y)$ is given
by an Euler product 
that converges to an analytic function for Re$(c)> \frac{1}{3}$ and
diverges at $c= \frac{1}{3}$; here individual terms 
in this Euler product diverge at $c=\frac{1}{3}.$
We observe also that 
$\fS_{f}(c, y)$ has
a phase change in its behavior as $y \to \infty$ at the threshold value $c=\frac{2}{3}$
corresponding to $\kappa=3$. Namely,  we have
\beql{418}
\lim_{y \to \infty} {\fS}_{f}(1-\frac{1}{\kappa}, y)= \left\{ 
\begin{array}{ll} {\fS}_{f}(1-\frac{1}{\kappa}) &~~\mbox{for}~~ \kappa>3,\\
~&~\\
+\infty &~~\mbox{for} ~~ 0<\kappa \le 3,
\end{array}
\right.
\eeq
where for $c> \frac{2}{3}$ we define
\beql{419}
\fS_{f}(c) := \prod_{p}\Big( 1 + 
\frac{p-1}{p(p^{3c-1}-1)} 
\Big(\frac{p - p^c}{p-1}\Big)^3 \Big).
\eeq
The Euler product \eqn{419} converges absolutely to an analytic function of
$c$ on the half-plane
 Re$(c) > \frac{2}{3},$ and  diverges at $c= \frac{2}{3}$. Outside this half-plane,
 on the range $\frac{1}{2} < c  \le \frac{2}{3}$, although one has
 $\fS_{f}(1-\frac{1}{\kappa}, y) \to \infty$ as $y \to \infty$,
 one can show that
 \[
 \fS_{f}(1-\frac{1}{\kappa}, y) \ll \exp ( y^{3/\kappa-1}  ).
 \]
 A consequence is that  for $2 < \kappa \le 3$ one has 
 $ \fS_{f}(1-\frac{1}{\kappa}, (\log H)^{\kappa})\ll H^{\epsilon}$ for any positive $\epsilon$,
 which suggests that the heuristic argument of section 1.2 may
 continue to apply to $N(H, \kappa)$ on this range.

 Using a sieve argument together with Theorem \ref{th21}, we shall treat the weighted count of 
 primitive solutions: 
 \beql{304b}
~~~~~~~N^{\ast} (x, y; \Phi) := \sum_{{X, Y, Z \in  \sS(y)}\atop{X+Y=Z, \gcd(X, Y,Z)=1}} 
\Phi\Big(\frac{X}{x}\Big) \Phi\Big(\frac{Y}{x}\Big) \Phi\Big(\frac{Z}{x}\Big).
\eeq
\smallskip
\begin{theorem}~\label{th121}
{\em (Weighted Primitive Integer Solutions Count)}
Assume the truth of the GRH.
Let $\Phi$ be a fixed smooth, compactly supported, real valued function in $C_c^{\infty}(\RR^{+})$.
Let $x$ and $y$ be large with $(\log x)^{8+\delta}\le y \le \exp((\log x)^{\frac 12-\delta})$.  Define 
$\kappa$ by the relation $y=(\log x)^{\kappa}$.  
Then, we have
\[
N^{\ast}(x, y; \Phi)= 
 \fS_{\infty}\Big(1- \frac{1}{\kappa},\Phi \Big)  \fS_{f}^*\Big(1- \frac{1}{\kappa}, y \Big)
 \frac{\Psi(x, y)^3}{x}
+ O \left( \frac{\Psi(x, y)^3}{x (\log y)^{\frac 14}} \right), 
\]
where the primitive non-archimedean singular series $\fS_f^{*}(c,y) $ was defined in 
(\ref{133}).  
\end{theorem}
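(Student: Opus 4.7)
The plan is to deduce Theorem~\ref{th121} from Theorem~\ref{th21} via Möbius inversion on the common factor of a solution. Every triple $(X,Y,Z) \in \sS(y)^3$ with $X+Y=Z$ factors uniquely as $(dX',dY',dZ')$ where $d = \gcd(X,Y,Z) \in \sS(y)$ and $(X',Y',Z')$ is primitive; since $\Phi(X/x) = \Phi(X'/(x/d))$ the weights respect this factorization, yielding $N(x,y;\Phi) = \sum_{d \in \sS(y)} N^*(x/d,y;\Phi)$ and hence, by Möbius inversion,
\begin{equation*}
N^*(x,y;\Phi) \;=\; \sum_{d \in \sS(y)} \mu(d)\, N(x/d,y;\Phi),
\end{equation*}
in which only squarefree $y$-smooth $d$ contribute.

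I would next truncate this sum at $d \le D := y^C$ for a sufficiently large constant $C = C(\kappa)$, and apply Theorem~\ref{th21} to each $N(x/d, y; \Phi)$ in the truncated range. Three controlled perturbations appear: (i) setting $\kappa_d := \log y/\log\log(x/d)$, one has $\kappa_d - \kappa = O\bigl(\log d/(\log x \log\log x)\bigr)$, so by continuity in the first argument $\fS_\infty(1-1/\kappa_d,\Phi)$ and $\fS_f(1-1/\kappa_d,y)$ both change from their $\kappa$-values by a factor $1 + O(\log d/\log x)$; (ii) the Hildebrand--Tenenbaum saddle-point estimate (already used in the proof of Theorem~\ref{th21}) yields
\begin{equation*}
\frac{\Psi(x/d,y)}{\Psi(x,y)} \;=\; d^{-\alpha(x,y)}\bigl(1 + O(1/\log y)\bigr)
\end{equation*}
uniformly for $y$-smooth $d \le D$, with $\alpha(x,y) = 1 - 1/\kappa + o(1)$; and (iii) the Theorem~\ref{th21} error on each term contributes $\ll (\Psi(x/d,y)^3/(x/d))(\log\log y/\log y)$. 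Collecting, the truncated main term becomes
\begin{equation*}
\fS_\infty\bigl(1-\tfrac{1}{\kappa},\Phi\bigr)\,\fS_f\bigl(1-\tfrac{1}{\kappa},y\bigr)\,\frac{\Psi(x,y)^3}{x}\sum_{\substack{d \in \sS(y),\,\mu^2(d)=1 \\ d \le D}} \mu(d)\,d^{-(3\alpha-1)}(1+o(1)).
\end{equation*}
Since $3\alpha - 1 = 2-3/\kappa > 13/8$ for $\kappa > 8$, this Möbius sum converges absolutely; completing it to all $d \in \sS(y)$ costs $O(D^{-(3\alpha-2)})$, and the completed sum equals $\prod_{p \le y}(1 - p^{-(3c-1)})$ at $c = 1-1/\kappa$. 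Multiplying $\fS_f(c,y)$ by this Euler product reproduces $\fS_f^*(c,y)$ by the very definition~(\ref{133}), yielding the claimed main term.

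The remaining tail $d > D$ is controlled by the Theorem~\ref{th21} upper bound $|N(x/d,y;\Phi)| \ll \Psi(x/d,y)^3/(x/d)$ combined with the saddle bound $\Psi(x/d,y) \ll \Psi(x,y)\,d^{-\alpha}$, contributing $\ll (\Psi(x,y)^3/x)\sum_{d > D} d^{1-3\alpha} \ll (\Psi(x,y)^3/x)\,D^{2-3\alpha}$; since $3\alpha-2 > 0$ for $\kappa > 3$, choosing $C$ large enough renders this $\ll (\Psi(x,y)^3/x)(\log y)^{-1/4}$. The accumulated Theorem~\ref{th21} errors, summed over $d \le D$ with weight $d^{1-3\alpha}$, total $\ll (\Psi(x,y)^3/x)(\log\log y/\log y)$ (the $d$-sum converging for $\kappa > 3$), and are absorbed into the final error. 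The principal technical hurdle lies not in the Möbius inversion itself but in certifying that the Hildebrand--Tenenbaum saddle-point asymptotic $\Psi(x/d,y) \sim \Psi(x,y) d^{-\alpha}$ holds uniformly in $d$ up to $D$ with a controllable error, and in keeping this uniformity in balance with the singular-series perturbation and the Möbius completion error; this book-keeping is what produces the weaker final error $(\log y)^{-1/4}$ in place of the $\log\log y/\log y$ of Theorem~\ref{th21}.
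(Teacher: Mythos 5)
Your overall strategy (reduce to Theorem \ref{th21} by M{\"o}bius inversion over the gcd, then extract $\fS_f^*$ from $\fS_f$ via the Euler factor $\prod_{p\le y}(1-p^{1-3c})$) is in the same spirit as the paper, and your main-term bookkeeping is correct. But as written there is a genuine gap in the tail $d>D$. The exact inversion $N^*(x,y;\Phi)=\sum_{d\in\sS(y)}\mu(d)N(x/d,y;\Phi)$ forces you to control $N(x/d,y;\Phi)$ for $d$ ranging all the way up to a constant times $x$, and there you invoke two estimates outside their proven range: (a) the bound $|N(x/d,y;\Phi)|\ll \Psi(x/d,y)^3/(x/d)$ is \emph{not} given by Theorem \ref{th21} once $y>\exp((\log(x/d))^{\frac12-\delta})$, i.e.\ once $x/d<\exp((\log y)^{2+o(1)})$ — the theorem's hypotheses fail at those scales and no substitute bound is supplied; and (b) the comparison $\Psi(x/d,y)\ll\Psi(x,y)d^{-\alpha}$ is only available from Theorem \ref{th93} for multipliers up to $y$, whereas your tail needs it uniformly for $d$ up to roughly $x$ (one can patch this with Rankin's inequality $\Psi(t,y)\le t^{\alpha}\zeta(\alpha;y)$ at the cost of a factor $\approx\log x$, but that argument is not in your write-up, and it must then be combined with a separate trivial bound such as $N(x',y;|\Phi|)\ll\Psi(Cx',y)^2$ for the very small scales). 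Even inside your truncation $d\le y^C$, Theorem \ref{th93} as stated covers only $d\le y$, so the claimed uniformity already requires an iteration you do not perform. In short, the step ``the remaining tail $d>D$ is controlled by the Theorem \ref{th21} upper bound'' is unsupported precisely where it is needed.

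The paper avoids this problem structurally rather than by estimating the full tail: it never performs the complete inversion. Setting $z=\tfrac12\log y$ and $P_z=\prod_{p\le z}p$, it uses the truncated sieve identity in which only $d\mid P_z$ (so $d\le P_z\le y$) appear, and bounds the discrepancy between $N^*(x,y;\Phi)$ and $\sum_{d\mid P_z}\mu(d)N(x/d,y;\Phi)$ by $\sum_{z<p\le y}N(x/p,y;|\Phi|)$, i.e.\ by solutions whose gcd has a prime factor in $(z,y]$. Every auxiliary quantity is then evaluated at a scale $x/d\ge x/y$, where Theorems \ref{th21} and \ref{th93} apply verbatim, and the leftover term is $\ll (\Psi(x,y)^3/x)\sum_{z<p\le y}p^{1-3c}\ll \Psi(x,y)^3/(x z^{1/4})$, which is exactly the source of the $(\log y)^{-1/4}$ in the statement (it does not arise from saddle-point bookkeeping, as you suggest). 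If you want to keep your full-inversion route, you must add the missing ingredients for the tail: Theorem \ref{th21} applied at each intermediate scale where its hypotheses hold, plus Rankin/trivial bounds below $\exp((\log y)^{2+o(1)})$; otherwise adopt the paper's truncated sieve, which renders the whole issue moot.
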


 Theorem \ref{th21} and Theorem \ref{th121} together  imply 
 that for nonnegative functions $\Phi$ a smoothed
analogue of Conjecture \ref{cj14} holds  for $\kappa>8$. \smallskip

%
%

\begin{theorem}~\label{th23a}
{\em (Relative Density of Weighted Primitive Smooth Solutions)}
Assume the truth of the GRH.
Then for  any nonnegative function $\Phi(x) \in C_c^{\infty}(\RR_{>0})$
not identically zero,  there holds 
\beql{375}
 \lim_{x \to \infty} \,\frac{ N^{\ast}(x, (\log x)^\kappa; \Phi)}{N(x, (\log x)^{\kappa}; \Phi)} = 
 \, \frac{1}{\zeta(2- \frac{3}{\kappa})},~~~~\mbox{for}~~ \kappa>8.
\eeq
\end{theorem}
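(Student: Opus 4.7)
The plan is to divide the two asymptotic formulas already delivered by Theorems~\ref{th21} and~\ref{th121}, observe that almost everything cancels, and identify the surviving ratio as an Euler product for $1/\zeta$. Fix $\kappa>8$ and set $c:=1-\frac{1}{\kappa}\in(\tfrac{7}{8},1)$, so $3c-1=2-\frac{3}{\kappa}>\tfrac{13}{8}>1$; write $y=(\log x)^{\kappa}$. Theorem~\ref{th21} gives
\[
N(x,y;\Phi)=\fS_{\infty}(c,\Phi)\fS_{f}(c,y)\,\frac{\Psi(x,y)^{3}}{x}+O\!\left(\frac{\Psi(x,y)^{3}}{x}\cdot\frac{\log\log y}{\log y}\right),
\]
and Theorem~\ref{th121} gives the analogous statement with $\fS_{f}$ replaced by $\fS_{f}^{\ast}$ and error term $O(\Psi(x,y)^{3}/(x(\log y)^{1/4}))$. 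The first task is to check that both main terms are genuinely of order $\Psi(x,y)^{3}/x$, so that the error terms are of smaller order. The archimedean factor $\fS_{\infty}(c,\Phi)$ is common to both and, for a nonnegative $\Phi$ not identically zero whose support is small enough that the triple $(t_1,t_2,t_1+t_2)$ can simultaneously lie in it (which is the case for the $\Phi$ of interest, as in the discussion following Theorem~\ref{th21}), is strictly positive since the integrand in \eqn{501} is nonnegative and positive on a set of positive measure. For $\kappa>8$ we have $c>\tfrac{2}{3}$, so by \eqn{418} and the analogous statement for $\fS_{f}^{\ast}(c,y)$ both non-archimedean singular series converge as $y\to\infty$ to strictly positive limits $\fS_{f}(c)$ and $\fS_{f}^{\ast}(c)$ respectively, and are in particular bounded above and bounded away from zero uniformly in $y$. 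Hence both error terms are $o(1)$ times the corresponding main term.

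Next I would simply divide and extract the ratio of local factors. Comparing \eqn{2.3} with \eqn{133}, the tails over $p>y$ (which in both cases are the same Mertens-type product) cancel, and for each prime $p\le y$ the local $\fS_{f}^{\ast}$-factor equals the corresponding local $\fS_{f}$-factor times $(1-p^{1-3c})$. Therefore
\[
\frac{\fS_{f}^{\ast}(c,y)}{\fS_{f}(c,y)}=\prod_{p\le y}\left(1-\frac{1}{p^{3c-1}}\right).
\]
Since $3c-1=2-\frac{3}{\kappa}>1$, the right-hand side converges absolutely as $y\to\infty$, and by the Euler product for the Riemann zeta function the limit equals $1/\zeta(2-\frac{3}{\kappa})$.

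Putting this together,
\[
\frac{N^{\ast}(x,y;\Phi)}{N(x,y;\Phi)}
=\frac{\fS_{\infty}(c,\Phi)\fS_{f}^{\ast}(c,y)\Psi(x,y)^{3}/x+o(\Psi(x,y)^{3}/x)}{\fS_{\infty}(c,\Phi)\fS_{f}(c,y)\Psi(x,y)^{3}/x+o(\Psi(x,y)^{3}/x)}
=\frac{\fS_{f}^{\ast}(c,y)}{\fS_{f}(c,y)}\bigl(1+o(1)\bigr),
\]
which tends to $1/\zeta(2-\frac{3}{\kappa})$ as $x\to\infty$, establishing \eqn{375}.

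The genuinely substantive work has all been done in Theorems~\ref{th21} and~\ref{th121}: the deduction itself is essentially algebra plus an Euler-product identification. The only point that needs any care is the verification that the main terms are truly nonvanishing in the asymptotic regime, i.e.\ the positivity of $\fS_{\infty}(c,\Phi)$ (a one-line continuity argument for the $\Phi$ under consideration) and the uniform lower bound on $\fS_{f}(c,y)$ (which follows from termwise positivity of the Euler factors together with convergence of the product for $c>\tfrac{2}{3}$, as in \eqn{418}). These ensure that the error terms are honestly $o$ of the main term after dividing, which is the only place where the hypothesis $\kappa>8$ (needed to make Theorems~\ref{th21} and~\ref{th121} applicable) gets used.
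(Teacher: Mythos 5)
Your argument is correct and essentially identical to the paper's: the paper likewise divides the asymptotic formulas of Theorems \ref{th21} and \ref{th121} and uses the Euler-product identity $\fS_{f}^{\ast}(c)=\fS_{f}(c)/\zeta(3c-1)$ of \eqn{423}, which is the same relation you apply directly at finite level via $\fS_{f}^{\ast}(c,y)/\fS_{f}(c,y)=\prod_{p\le y}\bigl(1-p^{1-3c}\bigr)\to 1/\zeta(2-\tfrac{3}{\kappa})$. Your explicit check that $\fS_{\infty}(1-\tfrac{1}{\kappa},\Phi)>0$ (i.e.\ that ${\rm supp}\,\Phi$ contains points $t_1,t_2,t_1+t_2$) is a point the paper passes over --- it simply asserts $N(x,(\log x)^{\kappa};\Phi)>0$ from nonnegativity --- and such a condition is in fact needed for the ratio to be well defined, so your extra care is warranted rather than a deviation.
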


 Concerning smaller values of $\kappa$, 
we expect that the asymptotic formulae  given in Theorem \ref{th21} 
 and Theorem \ref{th121} 
continue to hold in the range $\kappa >3$ (so that $c=1-1/\kappa >2/3$).  
If so, then in this range both 
$N(x,y;\Phi)$ and $N^*(x,y;\Phi)$ would be of comparable size, with both being of size 
about $\Psi(x,y)^3/x$, conforming to the heuristic (\ref{109aa}). 
 If $1/2 < c \le \frac{2}{3}$, then 
$\fS_f^*(c,y)$ is of constant size, but $\fS_f(c,y)$ diverges as  $y \to \infty$.  
Thus for  the corresponding 
range $2< \kappa \le 3$, we might still hope that 
the asymptotic formulae of Theorems \ref{th21} and \ref{th121} are true, but 
note that  in this range there are significantly fewer primitive solutions compared to 
imprimitive ones.

The upper bound $y\le \exp((\log x)^{\frac 12-\delta})$ imposed in proving Theorems \ref{th21} and 
\ref{th121} facilitates some of our calculations, but it should be possible to 
remove this condition entirely and obtain similar results.  We have not done 
so, since our interest is in small values of $y$, and moreover in larger ranges 
of $y$ one would expect an unconditional treatment by different means.

Before proceeding to discuss the proofs of our main results stated above, 
we show how the theorems stated in the introduction, as well as Theorem \ref{th23a}, follow from these 
weighted versions.

\begin{proof} [Proof of Theorem~\ref{th13}]  Given any $\epsilon >0$ we may 
construct a smooth function $\Phi_{\epsilon}$ such that $\Phi_{\epsilon}$ 
is smooth and supported on $[\epsilon,1-\epsilon]$, always lies 
between $0$ and $1$, and equals $1$ on the interval $[2\epsilon,1-2\epsilon]$.   
Then $N^*(H,\kappa) \ge N^*(H,(\log H)^{\kappa}; \Phi_\epsilon)$, and we 
may use Theorem \ref{th121} to evaluate the latter quantity.  Since 
$ \fS_{\infty}( c, \Phi_\epsilon) \to  
\fS_{\infty}(c)$ as $\epsilon \to 0$, we deduce Theorem \ref{th13}.
\end{proof}

Theorem \ref{th12} follows immediately from Theorem \ref{th13}.  

\begin{proof}[Proof of Theorem~\ref{th15}]
Let $S$ denote the first $s$ primes,
and choose  $H= \exp( s^{1/8 - \epsilon})$
and $y= p_{s}$.
Then
$(\log H)^{8+\epsilon} = (s^{\frac{1}{8}-\epsilon})^{8 +\epsilon} < s < y$,
so that
$N(S) \ge  N^{*}(H, p_y) \ge N^{*}(H, (\log H)^{8+ \epsilon}).$
Assuming the GRH, Theorem~\ref{th13} gives, 
for sufficiently large $H$,  
$N(S) \ge C_{\epsilon}  H^{2- 3/(8+ \epsilon)} \ge H,$
as asserted. 
\end{proof}


\begin{proof}[Proof of Theorem~\ref{th23a}]
This result is based on the identity of Euler products
 \beql{423}
 \fS_{f}^{\ast}(c)
 := \prod_{p}\Big( \Big( 1 + \frac{p-1}{p(p^{3c-1}-1)} 
\left(\frac{p - p^c}{p-1}\right)^3 \Big) \Big(1- \frac{1}{p^{3c-1}}\Big)\Big)= \frac{1}{\zeta(3c- 1)}\fS_{f}(c).
\eeq
which follows taking $y \to \infty$ in \eqn{133}.
This identity shows that $\fS_{f}(c)$ has a meromorphic continuation to the half-plane
Re$(c)> \frac{1}{2}$, with  its only singularity on
this region being a simple pole at $c=\frac{2}{3}$ having residue $\frac{1}{3}\fS_{f}^{\ast}(\frac{2}{3})$.
In particular, for real $c = 1 -\frac{1}{\kappa}>\frac{2}{3}+\epsilon$ we have
$$
\fS_{f}(c, y) = \fS_f(c) \Big(1+ O_{\epsilon}\Big(\frac{1}{y}\Big)\Big),
$$
and for real $c > \frac{1}{2} +\epsilon$ we have
$$
\fS_{f}^{\ast}(c, y) = \fS_f^{\ast}(c) \Big(1+ O_{\epsilon}\Big(\frac{1}{y}\Big)\Big).
$$

 Substituting these estimates in the main terms of Theorem~\ref{th21} and  Theorem~\ref{th121}
 yields, for $\kappa> 8 +\delta$, the estimate  
\beql{1202b}
N^{\ast}(x, (\log x)^{\kappa}; \Phi)=
\frac{1}{\zeta(2-\frac{3}{\kappa})}N(x, (\log x)^{\kappa}; \Phi))
\left( 1+ O_{\delta}
\left( \frac{1}{(\log\log x)^{\frac{1}{4}}}\right)\right).
\eeq
The positivity hypothesis  on $\Phi$ implies that $N(x, (\log x)^{\kappa}; \Phi)>0$
so we may divide both sides of \eqn{1202b} by it to obtain the ratio estimate \eqn{375}.
\end{proof}

We shall use the Hardy-Littlewood circle method to evaluate $N(x,y;\Phi)$. 
To this end, we  introduce the  weighted exponential sum
\beql{305} 
E(x, y; \alpha) := \sum_{n \in \sS(y)} e(n \alpha) \Phi\Big(\frac{n}{x}\Big),
\eeq
where throughout we use $e(x) := e^{2\pi i x}$.  Then we have 
\beql{306}
N(x, y; \Phi)= 
\int_{0}^{1} E(x,y; \alpha)^2 E(x,y; - \alpha) d \alpha,
\eeq
because in multiplying out the exponential sums in the integral,
only terms $(n_1, n_2, n_3)$ with $n_1+n_2-n_3=0$ contribute.
The crux of the problem then is to understand the weighted exponential 
sum $E(x,y;\alpha)$.  

To do this, we show how to express the term $e(n\alpha) \Phi(n/x)$ 
in terms of sums over multiplicative Dirichlet characters to a 
certain modulus and integrals of $n^{it}$ over $t$ in a 
certain range.  This is carried out precisely in Section 3, but 
the idea is implicit in the original `Partitio Numerorum'  papers of Hardy and Littlewood 
(\cite{HL23}, \cite{HL24}) where they dealt with the ternary Goldbach problem 
assuming a weaker form of GRH.   We hope that the explicit form that we 
give may be useful in other contexts.  

The decomposition of $e(n\alpha) \Phi(n/x)$ in terms of multiplicative characters 
converts the problem of understanding $E(x,y;\alpha)$ to one of understanding 
$\sum_{n\in \sS(y)} \chi(n) n^{-it} \Phi(n/x)$ for suitable Dirichlet characters $\chi$ and 
suitable real numbers $t$.  We establish, on GRH, that such sums are small 
unless $\chi$ happens to be the principal character, and $|t|$ is small.  The key 
step in achieving this is to bound partial 
Euler products $L(s,\chi;y) =\prod_{p\le y}(1-\chi(p)/p^{s})^{-1}$ on GRH.  The 
bounds for these partial Euler products that we establish are analogous to the 
Lindel{\" o}f bounds for Dirichlet $L$-functions, and the (familiar) argument 
is described in \S 5.  In this fashion, we are able to understand 
conditionally the weighted exponential sum $E(x,y;\alpha)$, and in \S 6 we 
establish the following Theorem.  


\begin{theorem}\label{th2.3} Assume the truth of the GRH.  Let $\delta>0$ be any 
fixed real number.  Let $x$ and $y$ be 
large with $(\log x)^{2+\delta} \le y\le \exp((\log x)^{\frac 12-\delta})$, and let 
$\kappa$ be defined by $y=(\log x)^{\kappa}$.  Let $\alpha\in [0,1]$ be a 
real number with $\alpha=a/q+\gamma$ where $q\le \sqrt{x}$, $(a,q)=1$, and 
$|\gamma|\le 1/(q\sqrt{x})$. 

(1)  If $|\gamma|\ge x^{\delta-1}$ then we have, for any fixed $\epsilon>0$, 
$$ 
E(x,y;\alpha) \ll x^{\frac{3}{4}+\epsilon}. 
$$ 

(2) If $|\gamma|\le x^{\delta-1}$ then we have, writing $q=q_0q_1$ with 
$q_0 \in {\sS(y)}$ and all prime factors of $q_1$ being bigger than $y$, and writing ${c_0}=1-1/\kappa$, 
for any fixed $\epsilon >0$, 
\begin{eqnarray*}
E(x,y;\alpha)& = &\frac{\mu(q_1)}{\phi(q_1)} 
\frac{1}{q_0^{c_0}}\prod_{p|q_0} \Big(1- \frac{p^{c_0}-1}{p-1}\Big) 
\Big( c_0\int_0^{\infty} 
\Phi(w) e(\gamma xw) w^{c_0-1} dw\Big) \Psi(x,y)  \\
&&+ O_{\epsilon}(x^{\frac{3}{4}+\epsilon}) + O_{\epsilon}\Big( \frac{\Psi(x,y)q_0^{-c_0+\epsilon} q_1^{-1+\epsilon}}{(1+|\gamma|x)^2} \frac{(\log \log y)}{\log y} \Big). \\
\end{eqnarray*}
\end{theorem}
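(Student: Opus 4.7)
My plan is to analyze $E(x,y;\alpha)$ by combining three ingredients: (i) a Dirichlet character decomposition of the additive phase $e(n\alpha)$, (ii) a Mellin transform that separates the smooth cutoff $\Phi$ from the arithmetic, and (iii) the GRH Lindel{\"o}f bounds on the partial Euler products $L(s,\chi;y):=\prod_{p\le y}(1-\chi(p)/p^{s})^{-1}$ established in \S5.  The Mellin line of integration will be $\mathrm{Re}(s)=c_0:=1-1/\kappa$, the Hildebrand--Tenenbaum saddle-point exponent for $\Psi(x,y)$.

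\textbf{Decomposition.}  By CRT, write $a/q\equiv a_0/q_0+a_1/q_1\pmod{1}$ with $(a_i,q_i)=1$.  Since $n\in\sS(y)$ forces $(n,q_1)=1$, I expand
\[
e(na_1/q_1)=\frac{1}{\phi(q_1)}\sum_{\chi_1\bmod q_1}\bar\chi_1(a_1 n)\tau(\chi_1).
\]
For the $q_0$-part, factor $n=n_0 m$ with $n_0=\gcd(n,q_0^{\infty})$ and $(m,q_0)=1$; then $e(n_0 m a_0/q_0)=e(m a^{*}/q_0^{*})$ with $q_0^{*}:=q_0/\gcd(n_0,q_0)$ and $(a^{*},q_0^{*})=1$, which I expand using Dirichlet characters $\chi_0$ modulo $q_0^{*}$.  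In parallel, Mellin inversion rewrites the weight as
\[
\Phi(n/x)\,e(n\gamma)=\frac{1}{2\pi i}\int_{(c_0)}G(s;\gamma,x)\,n^{-s}\,ds,\qquad G(s;\gamma,x):=x^{s}\!\!\int_{0}^{\infty}\!\!\Phi(w)\,e(\gamma x w)\,w^{s-1}\,dw,
\]
and repeated integration by parts in $w$ yields $|G(c_0+it;\gamma,x)|\ll_{k}x^{c_0}(1+|\gamma x|)^{k}(1+|t|)^{-k}$ for every $k$, effectively truncating the contour at $|t|\asymp 1+|\gamma x|$.

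\textbf{Main term.}  After interchange, the inner sum over $(m,q_0)=1$ becomes the partial Dirichlet series $L(s,\bar\chi_0\bar\chi_1;y)\prod_{p\mid q_0}(1-\bar\chi_0\bar\chi_1(p)/p^{s})$, and the $n_0$-sum collapses into multiplicative local factors at primes $p\mid q_0$.  The principal $\chi_1$ piece contributes $\tau(\chi_1)\bar\chi_1(a_1)/\phi(q_1)=\mu(q_1)/\phi(q_1)$.  For the principal $\chi_0^{*}$ piece, the local $n_0$-sum at each $p\mid q_0$ with $v=v_p(q_0)$ equals $\sum_{k\ge 0}\mu(p^{v-\min(k,v)})\phi(p^{v-\min(k,v)})^{-1}p^{-kc_0}$, which a short calculation shows equals $p^{-vc_0}(p-p^{c_0})/\bigl((p^{c_0}-1)(p-1)\bigr)$; absorbing the missing $(1-p^{-c_0})$ factor at $p\mid q_0$ from the full Euler product $\prod_{p\le y}(1-p^{-c_0})^{-1}$ converts this to $p^{-vc_0}\bigl(1-(p^{c_0}-1)/(p-1)\bigr)$, which assembles into the stated factor $q_0^{-c_0}\prod_{p\mid q_0}(1-(p^{c_0}-1)/(p-1))$.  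Evaluating the remaining Mellin integral at the saddle point via the Hildebrand--Tenenbaum representation of $\Psi(x,y)$ from \S4 produces the factor $\Psi(x,y)\cdot c_0\!\int_{0}^{\infty}\Phi(w)e(\gamma x w)w^{c_0-1}\,dw$.

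\textbf{Errors, case (1), and main obstacle.}  For non-principal $\chi=\bar\chi_0\bar\chi_1$, the GRH Lindel{\"o}f bound $|L(s,\chi;y)|\ll_{\epsilon}(q(1+|t|)y)^{\epsilon}$ for $\mathrm{Re}(s)\ge 1/2+\epsilon$ from \S5, combined with the $(1+|t|)^{-k}$ decay of $G$ and the Gauss-sum estimate $|\tau(\chi_1)|\le q_1^{1/2}$, bounds the full non-principal contribution by $O_{\epsilon}(x^{3/4+\epsilon})$ after a suitable contour deformation (partial Euler products have no poles, so the shift is free of residues); the secondary error $\Psi(x,y)q_0^{-c_0+\epsilon}q_1^{-1+\epsilon}(1+|\gamma x|)^{-2}(\log\log y)/\log y$ arises from estimating the saddle-point arc of length $\asymp 1+|\gamma x|$ via the Hildebrand--Tenenbaum expansion of $\Psi(x,y)$.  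In case (1), $|\gamma|\ge x^{\delta-1}$ forces $|\gamma x|\ge x^{\delta}$, and $K$-fold integration by parts in $w$ shows $\int_{0}^{\infty}\Phi(w)e(\gamma xw)w^{c_0-1}\,dw\ll_{K}|\gamma x|^{-K}$, absorbing the main term into $O(x^{3/4+\epsilon})$ for $K$ sufficiently large.  The main obstacle is the combinatorial collapse in the main-term step: verifying that the formally unbounded $n_0$-sum, weighted by $\mu(q_0^{*})/\phi(q_0^{*})$ and combined with the missing Euler factors at $p\mid q_0$, telescopes cleanly to $q_0^{-c_0}\prod_{p\mid q_0}(1-(p^{c_0}-1)/(p-1))$; a parallel technical hurdle is making the GRH Lindel{\"o}f bound on $L(s,\chi;y)$ uniform across all divisor-conductors $q_0^{*}\mid q_0$ with $q=q_0q_1\le\sqrt{x}$, and strong enough to deliver the $x^{3/4+\epsilon}$ saving.
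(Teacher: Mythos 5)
Your route is essentially the paper's own: decompose the additive phase into Dirichlet characters (your CRT split into characters mod $q_1$ and mod divisors of $q_0$ is a cosmetic variant of Proposition \ref{pr42} together with Lemma \ref{le62}), separate the weight by the Mellin transform $\check{\Phi}(s,\gamma x)$, kill non-principal characters with the GRH Lindel{\"o}f bound on $L(s,\chi;y)$ plus the Gauss-sum bound to get $O(x^{3/4+\epsilon})$, and evaluate the principal-character term by the Hildebrand--Tenenbaum saddle point; your local computation at $p\mid q_0$ (despite a dropped factor $p^{c_0}$ in the intermediate display) does assemble to the correct $q_0^{-c_0}\prod_{p\mid q_0}(1-(p^{c_0}-1)/(p-1))$, matching the paper's $H(s;q_0)$. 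The genuine gap is in part (1). There you dispose only of the \emph{main} term, via the rapid decay of $\int_0^\infty\Phi(w)e(\gamma xw)w^{c_0-1}dw$; but your own error budget still carries the term $\Psi(x,y)q_0^{-c_0+\epsilon}q_1^{-1+\epsilon}(1+|\gamma|x)^{-2}$, and with $|\gamma|x\ge x^{\delta}$ and $\Psi(x,y)=x^{1-1/\kappa+o(1)}$ this is only $O(x^{1-2\delta+o(1)})$, which exceeds the claimed $x^{3/4+\epsilon}$ whenever $\delta<1/8$ and $\kappa$ is large. So part (1) does not follow from what you have written. The paper avoids this by treating $|\gamma|\ge x^{\delta-1}$ with a \emph{different} contour in Proposition \ref{prop62}(1): the principal-character integral is shifted to $\mathrm{Re}(s)=1+\epsilon$, where $\zeta(s;y)\ll_\epsilon 1$, and Lemma \ref{le33} with $k$ large (usable because on the truncated contour $|s|\ll y=x^{o(1)}$ while $|\gamma x|\ge x^{\delta}$) forces $\check{\Phi}(s,\gamma x)\ll x^{-1}$, so the whole principal term is $O(x^{3/4+\epsilon}q_0^{-3/4+\epsilon})$ with no $\Psi(x,y)$-sized remainder. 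Alternatively you could rescue your version by proving $(1+|\gamma|x)^{-K}$ decay with $K\delta>1$ in \emph{all} error terms via higher-order integration by parts, but some such extra input is required.

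A secondary imprecision: you place the contour at $\mathrm{Re}(s)=c_0=1-1/\kappa$ and attribute the $(\log\log y)/\log y$ loss to ``the saddle-point arc of length $\asymp 1+|\gamma x|$''. In fact the Hildebrand--Tenenbaum results that produce the factor $\Psi(x,y)$ (Lemmas \ref{le91}--\ref{le93}) are stated on the line through the true saddle point $c(x,y)$, the arc used is $|t|\le 1/\log y$ (the truncation at $|t|\asymp 1+|\gamma x|$ comes only from the decay of $\check{\Phi}$), and the $(\log\log y)/\log y$ factor in the theorem arises afterwards, from replacing $c(x,y)$ by $c_0$ via \eqref{819a}. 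So either run the saddle-point evaluation at $c(x,y)$ and swap $c\mapsto c_0$ at the end, as the paper does, or supply analogues of those lemmas on the line $\mathrm{Re}(s)=c_0$; as stated, your evaluation of the saddle arc is not covered by the results you quote.
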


The proof supposes $y \ge (\log x)^{2+\delta}$, 
but the result only gives a nontrivial estimate for somewhat larger $y$ because
for $\kappa \le 4$   one 
has the trivial estimate
\[
|E(x, y; \alpha)| 
\ll \Psi(x, y) 
\ll x^{\frac{3}{4} + \epsilon}.
\]

Note that by Dirichlet's theorem on Diophantine approximation one can always 
find $q\le \sqrt{x}$, and $(a,q)=1$ with $|\alpha-a/q| \le 1/(q\sqrt{x})$.   
Theorem \ref{th2.3} then shows that $E(x,y;\alpha)$ is small unless 
$q$ is small and $|\gamma|$ is small.  In other words, Theorem \ref{th2.3} 
can be used to estimate $E(x,y;\alpha)$ on the {\sl minor arcs} where 
$\alpha$ is not near a rational number with small denominator, and it 
also furnishes an asymptotic formula for our exponential sum 
when $\alpha$ lies on a {\sl major arc}.   We shall define the 
major and minor arcs more precisely in \S 7, where we use 
the results leading to Theorem \ref{th2.3} to complete the 
proof of Theorem \ref{th21}.

 We should point out that the exponential sum $\sum_{n\le x, n\in {\sS(y)}} e(n\alpha)$ 
 has been studied unconditionally be several authors, see  de la Bret\'{e}che 
 (\cite{dlB98}, \cite{dlB99}), de la Bret\'{e}che and Tenenbaum (\cite{BT04}, \cite{BT05}, \cite{BT07}), 
 and de la Bret{\' e}che and Granville \cite{BG09}.    Our work gives better 
 estimates, and holds in wider ranges of $y$, but on the other hand it relies on the truth of the GRH.

In the range of interest to us, namely $y$ being a power of $\log x$, it is a 
delicate problem even to count the number of $y$-smooth integers up to $x$.  
One important ingredient in our work is the saddle-point method developed 
by Hildebrand and Tenenbaum \cite{HT86} which provides an asymptotic formula for $\Psi(x,y)$ 
in such ranges.  In \S 4, we survey briefly results on $\Psi(x,y)$ and extract the key results from the 
Hildebrand-Tenenbaum approach that we require.

Finally, in \S 8 we give a sieve argument that allows us to pass from all 
the solutions counted in Theorem \ref{th21} to only the primitive solutions 
counted in Theorem \ref{th121}.

%
%
\section{Multiplicative Character Decomposition}
\setcounter{equation}{0}

\noindent In this section we show how to express $e(n\alpha)\Phi(n/x)$ for $\alpha \in [0,1]$  
in terms of sums over multiplicative Dirichlet characters to a certain 
modulus and integrals of $n^{it}$ over $t$ in a certain range.   To achieve this we 
write $\alpha=a/q+\gamma$ with $(a,q)=1$, and then our decomposition will involve Dirichlet characters 
$\!\!\!\!\bmod q$ and functions $n^{it}$ where $t$ is roughly of size $1+|\gamma|x$.   
When $\alpha=a/q$ is a rational number, this is the familiar technique of expressing 
additive characters in terms of multiplicative characters, and our decomposition 
may be viewed as an extension of that method.

Let us first recall the decomposition of the additive character $e(an/q)$ in 
terms of multiplicative characters.   For a Dirichlet character $\chi \imod q$, not 
necessarily primitive, recall that the Gauss sum is defined by 
$\tau(\chi) = \sum_{b \imod q} \chi(b) e(b/q)$.  \smallskip

%
%

\begin{lemma}~\label{le31}  Let $a/q$ be a rational number with $(a,q)=1$.  

(1)  Let $n$ be an integer, and suppose that $(n,q)=d$.  Then with $n=md$ we have  
\beql{311a}
e\Big(\frac{an}{q}\Big)= e\Big(\frac{ma}{q/d}\Big) = \frac{1}{\phi(q/d)}
\sum_{\chi \imod {q/d}} \tau(\bar{\chi}) \chi(ma).
\eeq

(2) One has
\beql{312a}
\frac{1}{\phi(q/d)^2} \sum_{\chi \imod{q/d}} |\tau(\chi)|^2= 1.
\eeq
\end{lemma}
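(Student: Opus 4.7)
The plan is to derive both parts from the basic orthogonality relations for Dirichlet characters on $(\ZZ/Q\ZZ)^\ast$, where $Q = q/d$.

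For part (1), the first equality $e(an/q) = e(ma/(q/d))$ is immediate from the algebraic identity $n=md$. Setting $Q=q/d$, I would first verify the coprimality conditions: since $(a,q)=1$ we have $(a,Q)=1$, and since $d = \gcd(n,q) = \gcd(md, q)$, we have $(m,Q)=1$, hence $(ma,Q)=1$. The remaining substantive content is the standard identity
\beql{plan31}
e\!\left(\frac{c}{Q}\right) = \frac{1}{\phi(Q)} \sum_{\chi \imod Q} \tau(\bar\chi)\,\chi(c), \qquad (c,Q)=1,
\eeq
applied with $c=ma$. To establish \eqn{plan31}, I would expand the Gauss sum $\tau(\bar\chi) = \sum_{b \imod Q} \bar\chi(b) e(b/Q)$, swap the order of the sums over $\chi$ and $b$, and apply the orthogonality relation
$$\sum_{\chi \imod Q} \chi(c)\bar\chi(b) = \phi(Q)\,\mathbf{1}_{b \equiv c\, (Q)},\qquad (b,Q)=1,$$
which collapses the inner sum and leaves exactly $\phi(Q)\, e(c/Q)$.

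For part (2), I would expand
$$\sum_{\chi \imod Q} |\tau(\chi)|^2 = \sum_{\chi \imod Q}\,\sum_{a \imod Q}\sum_{b \imod Q} \chi(a)\bar\chi(b)\, e\!\left(\frac{a-b}{Q}\right),$$
interchange the outer $\chi$-sum with the $(a,b)$-sums, and apply the same orthogonality relation to reduce to the diagonal $a \equiv b \imod Q$ with $(a,Q)=1$. The remaining sum contributes $\phi(Q)$ terms each equal to $\phi(Q) e(0) = \phi(Q)$, yielding $\phi(Q)^2$, which is the claim. Alternatively, this is precisely Parseval's identity for the function $b \mapsto e(b/Q)$ on $(\ZZ/Q\ZZ)^\ast$ (after restricting appropriately).

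There is no real obstacle here: both statements are routine consequences of orthogonality. The only point that requires a moment of care is tracking the coprimality $(ma,Q)=1$ in part (1), since without it the right-hand side of \eqn{311a} would vanish while the left-hand side would not; this is where the hypothesis $(a,q)=1$ (together with the definition $d=\gcd(n,q)$) is essential.
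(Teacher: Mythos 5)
Your proof is correct and takes the same route as the paper: the paper disposes of both parts in one line, citing the definition of the Gauss sum together with orthogonality of the Dirichlet characters mod $q/d$, which is precisely the argument you write out in full (including the verification that $(ma,q/d)=1$, which the paper leaves implicit).
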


\proof  Both relations follow readily  from the definition of 
the Gauss sum and the orthogonality relations for the Dirichlet characters $\imod{q/d}$.  
\endproof
\smallskip

%
%

\begin{lemma}~\label{le31a} (Gauss sum estimate)  If $\chi \imod q$ is 
primitive then $|\tau(\chi)|=\sqrt{q}$.  If $\chi$ is induced by the primitive character $\chi' \imod{q'}$ then
\beql{313a}
\tau(\chi) = \mu\Big(\frac{q}{q'}\Big) \chi'\Big(\frac{q}{q'}\Big) \tau(\chi'),
\eeq
where $\mu(n)$ is the M\"{o}bius function, and so in this case $|\tau(\chi)| \le \sqrt{q^{\prime}}\le 
\sqrt{q}$. 
\end{lemma}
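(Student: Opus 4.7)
The plan is to establish both parts of the Gauss sum estimate by standard character-theoretic manipulations. For the first assertion about primitive characters $\chi \imod q$, the key identity to exploit is that
\[
\chi(m)\tau(\bar\chi) = \sum_{a \imod q} \bar\chi(a) e(am/q)
\]
holds for \emph{every} integer $m$ (not just those coprime to $q$); this extension to non-coprime $m$ is precisely where primitivity is used, and it follows from a short argument eliminating the possible residue via a nontrivial Dirichlet character modulo $q/(m,q)$. Given this identity, I would compute $|\tau(\bar\chi)|^2 \sum_{m\imod q} |\chi(m)|^2$ in two ways (the right-hand side squared and summed gives orthogonality of additive characters, collapsing to $q\phi(q)$), then divide through by $\phi(q)$ to conclude $|\tau(\chi)|^2 = q$.

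For the second assertion, write $q = q'k$ where $k = q/q'$. Since $\chi(n) = \chi'(n)$ when $(n,q)=1$ and vanishes otherwise,
\[
\tau(\chi) = \sum_{\substack{1\le n \le q \\ (n,q)=1}} \chi'(n)\, e(n/q).
\]
The condition $(n,q)=1$ factors as $(n,q')=1$ and $(n,k)=1$, but the first condition is automatic since $\chi'(n)=0$ whenever $(n,q')>1$. The plan is to detect $(n,k)=1$ by Möbius inversion, giving
\[
\tau(\chi) = \sum_{d\mid k} \mu(d)\chi'(d) \sum_{m=1}^{q/d} \chi'(m)\, e\!\left(\frac{m}{q/d}\right).
\]
Now write $m = jq' + r$ with $0 \le j < k/d$ and $1 \le r \le q'$; using periodicity $\chi'(jq'+r) = \chi'(r)$, the inner sum factors as
\[
\Big(\sum_{j=0}^{k/d-1} e(j/(k/d))\Big)\Big(\sum_{r=1}^{q'} \chi'(r)\, e(rd/(q'k))\Big),
\]
and the geometric sum over $j$ vanishes unless $k/d = 1$, i.e. $d = k$. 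Only the term $d=k$ survives, yielding $\tau(\chi) = \mu(k)\chi'(k)\tau(\chi')$, which is the claimed formula.

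The bound $|\tau(\chi)| \le \sqrt{q'} \le \sqrt{q}$ is then immediate from part (1) applied to $\chi'$ together with $|\mu(k)\chi'(k)| \le 1$. There is no genuine obstacle; the only place requiring care is verifying that the extended identity $\chi(m)\tau(\bar\chi)=\sum_a\bar\chi(a)e(am/q)$ holds for all $m$ when $\chi$ is primitive, and executing the Möbius/splitting bookkeeping cleanly so that the only surviving divisor is $d=k$.
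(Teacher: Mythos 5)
Your proposal is correct: part (1) via the extended identity $\chi(m)\tau(\bar\chi)=\sum_{a\bmod q}\bar\chi(a)e(am/q)$ (valid for all $m$ precisely because $\chi$ is primitive) plus orthogonality, and part (2) via M\"obius detection of $(n,q/q')=1$ followed by the split $m=jq'+r$, are exactly the standard computations, and the bookkeeping in your divisor sum does collapse to the single term $d=q/q'$ as you claim. The paper itself gives no proof, simply citing the result as standard (Lemma 4.1 of Granville--Soundararajan), so your write-up supplies the omitted argument rather than diverging from the paper's.
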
 

\proof
This is standard; see, for example Lemma 4.1 of 
Granville and \\ Soundararajan \cite{GS07}. 
\endproof

Now we turn to $e(n\gamma) \Phi(n/x)$ which we would like to express as 
an integral involving the multiplicative functions $n^{it}$.  To do this, we define 
\beql{331} 
\check{\Phi}( s, \lambda)
:= \int_{0}^{\infty} \Phi(w)e(\lambda w) w^{s-1}dw.
\eeq
Since $\Phi$ has compact support inside $(0,\infty)$ the integral above makes 
sense for all complex numbers $\lambda$ and $s$, but we shall be only interested in 
the case $\lambda$ real.  Note that $e(\lambda w)$ has the structure of an additive character
while $w^{s}$ has the structure of a multiplicative character so that the transform ${\check \Phi}(s,
\lambda)$ plays a role analogous to the Gauss sum.

We begin by showing that ${\check \Phi}(s,\lambda)$ is small unless $1+|\lambda|$ and 
$1+|s|$ are of roughly the same size.  \smallskip

%
%
\begin{lemma}~\label{le33}  Let $\Phi$ be a smooth function, compactly supported
in $(0, \infty)$.
Let $\lambda$ be real and suppose Re$(s)\ge 1/4$.  Then 
for  any non-negative integer $k$ we have 
\beql{341}
|\check{\Phi}(s, \lambda)| \ll_{k, \Phi} \min \Big( \Big(\frac{1+|\lambda|}{|s|}\Big)^k, 
\Big(\frac{1+|s|}{|\lambda|}\Big)^k \Big).
\eeq
\end{lemma}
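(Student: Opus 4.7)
The claim is a pair of bounds: the minimum of two quantities, each a decay estimate for $\check\Phi(s,\lambda)$. I would prove the two bounds separately by two integration-by-parts arguments that exploit the two factors in the integrand, namely $w^{s-1}$ (which will give decay in $|s|$) and $e(\lambda w)$ (which will give decay in $|\lambda|$). The compact support of $\Phi$ inside $(0,\infty)$, bounded away from both $0$ and $\infty$, is exactly what is needed to kill all boundary terms.

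For the bound $|\check\Phi(s,\lambda)| \ll_{k,\Phi}((1+|\lambda|)/|s|)^k$, I would repeatedly use $w^{s+j-1}\,dw = d(w^{s+j}/(s+j))$ and integrate by parts $k$ times. After each step the derivative falls on $\Phi(w)e(\lambda w)$, producing the operator $L:=D+2\pi i\lambda$ (where $D=d/dw$). Iterating, one obtains
\[
\check\Phi(s,\lambda)=\frac{(-1)^k}{s(s+1)\cdots(s+k-1)}\int_0^\infty L^k\Phi(w)\,e(\lambda w)\,w^{s+k-1}\,dw,
\]
and expanding $L^k$ by the binomial formula gives $\|L^k\Phi\|_\infty \ll_{k,\Phi}(1+|\lambda|)^k$. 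Since $\Phi$ is supported on some fixed $[a,b]\subset(0,\infty)$ and $\mathrm{Re}(s)\ge 1/4$, the factor $w^{s+k-1}$ is bounded on the support in terms of $\Phi$ and $k$, so the integral is $O_{k,\Phi}((1+|\lambda|)^k)$. Provided $|s|\ge 2k$ one has $|s(s+1)\cdots(s+k-1)|\ge(|s|/2)^k$, giving the desired bound.

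For the symmetric bound $|\check\Phi(s,\lambda)|\ll_{k,\Phi}((1+|s|)/|\lambda|)^k$, I would instead write $e(\lambda w)=(2\pi i\lambda)^{-1}\tfrac{d}{dw}e(\lambda w)$ and integrate by parts $k$ times, now throwing derivatives onto $\Phi(w)w^{s-1}$. By the Leibniz rule the result is a sum of terms of the form $\binom{k}{j}\Phi^{(j)}(w)\,(s-1)(s-2)\cdots(s-k+j)\,w^{s-1-k+j}$, which on the support $[a,b]$ (with $\mathrm{Re}(s)\ge 1/4$) has supremum $\ll_{k,\Phi}(1+|s|)^k$. Dividing by $|2\pi\lambda|^k$ yields the required estimate.

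The one genuine obstacle is the residual regime in which the denominator in either argument is not yet dominant, namely $|s|\lesssim k$ for bound (a) or $|\lambda|\lesssim k$ for bound (b). There I would simply use the trivial estimate $|\check\Phi(s,\lambda)|\le \int_a^b|\Phi(w)|w^{\mathrm{Re}(s)-1}\,dw \ll_\Phi 1$, and observe that in the relevant regime the right-hand side of the claimed bound is itself bounded below by a positive constant depending only on $k$ and $\Phi$ (using $|s|\ge 1/4$ from the hypothesis $\mathrm{Re}(s)\ge 1/4$). Combining the two separate bounds gives the minimum on the right of \eqn{341}.
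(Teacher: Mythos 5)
Your proposal is correct and follows essentially the same route as the paper: two integrations by parts $k$ times, once moving derivatives onto $\Phi(w)e(\lambda w)$ against $w^{s-1}$ and once onto $\Phi(w)w^{s-1}$ against $e(\lambda w)$, with the compact support and Leibniz expansions giving the two bounds. Your extra case split for small $|s|$ is harmless but unnecessary, since $\mathrm{Re}(s)\ge 1/4>0$ already gives $|s+j|\ge |s|$ for every $j\ge 0$, so $|s(s+1)\cdots(s+k-1)|\ge |s|^k$ directly.
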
 

\proof  We integrate by parts $k$ times, and can do this in two ways either 
using the pair of functions $\Phi(w)e(\lambda w)$ and $w^{s-1}$, or using the 
pair of functions $\Phi(w)w^{s-1}$ and $e(\lambda w) $.   Integrating by parts $k$ 
times using the first pair we obtain 
$$ 
{\check \Phi}(s,\lambda) =  (-1)^k \int_{0}^{\infty} 
\frac{d^k}{dw^k} \Big(\Phi(w)e(\lambda w)\Big) \frac{w^{s+k-1} }{s(s+1)\cdots(s+k-1)}dw.
$$
Since 
$$ 
\frac{d^k}{dw^k} \Big(\Phi(w)e(\lambda w)\Big) =\sum_{j=0}^{k} {k \choose j} \Phi^{(j)}(w) 
(2\pi i \lambda)^{k-j} e(\lambda w) \ll 2^k \sum_{j=0}^{k} |\Phi^{(j)}(w)| (2\pi |\lambda|)^{k-j}, 
$$ 
we conclude that 
$$ 
{\check \Phi}(s,\lambda) \ll_k \frac{1}{|s|^k} \sum_{j=0}^{k} |\lambda|^{k-j} \int_0^{\infty} |\Phi^{(j)}(w) w^{s+k-1}dw| \ll_{k, \Phi} \Big(\frac{1+|\lambda|}{|s|}\Big)^k.
$$ 

On the other hand, integrating by parts using the second pair we obtain 
$$ 
{\check \Phi}(s,\lambda) = (-1)^k \int_0^{\infty} \frac{d^k}{dw^k} \Big( \Phi(w) w^{s-1} \Big) 
\frac{e(\lambda w)}{(2\pi i \lambda)^k} dw.
$$ 
Since 
\begin{eqnarray*}
\frac{d^k}{dw^k} \Big(\Phi(w) w^{s-1} \Big) &=& \sum_{j=0}^k {k \choose j} {\Phi}^{(j)}(w) 
(s-1)\cdot (s-2) \cdots (s-(k-j)) w^{s-1-(k-j)} \\
&\ll_k& \sum_{j=0}^{k} |{\Phi}^{(j)}(w)| |s|^{k-j} |w|^{s-1-(k-j)},\\
\end{eqnarray*}
we conclude that 
$$ 
{\check \Phi}(s,\lambda) \ll_k \frac{1}{|\lambda|^k}\sum_{j=0}^{k} |s|^{k-j} \int_0^{\infty} 
|{\Phi}^{(j)}(w) w^{s-1-(k-j)} dw| \ll_{k,\Phi} \Big(\frac{1+|s|}{|\lambda|}\Big)^k.
$$
\endproof

Now we prove an analog of Lemma \ref{le31} for $e(n\gamma) \Phi(n/x)$.\smallskip

%
%
\begin{lemma}~\label{le32}  Let $\Phi$ be a smooth function compactly supported 
in $(0,\infty)$.  

(1) For $n \in \ZZ$, we have  for any positive $c={\rm Re}(s)$, 
\beql{318a}
e(n \gamma) \Phi\Big(\frac{n}{x}\Big) = \frac{1}{2\pi i}\int_{c - i \infty}^{c+i\infty}
\check{\Phi}( s, \gamma x) \left(\frac{x}{n}\right)^{s} ds.
\eeq

(2) Furthermore 
\beql{319a}
\frac{1}{2\pi} \int_{-\infty}^{\infty} | \check{\Phi}(c+it, \gamma x)|^2 dt=
\int_{-\infty}^{\infty} | \Phi(e^u) e(\gamma x e^u) e^{cu}|^2 du.
\eeq
\end{lemma}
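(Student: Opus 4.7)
The plan is to view $\check\Phi(s,\gamma x)$ as the Mellin transform of the smooth, compactly supported function $w \mapsto \Phi(w)e(\gamma xw)$ and to recover both identities via Mellin--Fourier duality. The slickest route is the substitution $w = e^u$ at the outset, which converts the Mellin transform on $(0,\infty)$ into a Fourier transform on $\RR$. Specifically, set
$$
g(u) := \Phi(e^u)\, e(\gamma x e^u)\, e^{cu}.
$$
Since $\Phi \in C_c^{\infty}(\RR^+)$, the function $g$ lies in $C_c^{\infty}(\RR)$ and is a fortiori Schwartz. The change of variables $w = e^u$ in the definition of $\check\Phi$, with $s = c+it$, gives
$$
\check\Phi(c+it,\gamma x) = \int_{-\infty}^{\infty} g(u)\, e^{itu}\, du,
$$
so $\check\Phi(c+it,\gamma x)$ is the Fourier transform of $g$ in the convention $\widehat g(\xi) = \int g(u)\,e^{i\xi u}\,du$. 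Lemma~\ref{le33} (or, more crudely, the Schwartz property of $g$) makes every integral encountered below absolutely convergent.

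For part (1), Fourier inversion yields
$$
g(u) = \frac{1}{2\pi}\int_{-\infty}^{\infty} \check\Phi(c+it,\gamma x)\, e^{-itu}\, dt.
$$
Specializing to $u = \log(n/x)$ gives $g(u) = \Phi(n/x)\, e(\gamma n)\, (n/x)^c$; after dividing by $(n/x)^c$, rewriting $(n/x)^{-c-it} = (x/n)^{c+it}$, and converting $\tfrac{1}{2\pi}dt$ into the contour integral $\tfrac{1}{2\pi i}\,ds$ along $\mathrm{Re}(s)=c$, one obtains \eqn{318a}. For part (2), Parseval's identity applied to the Fourier pair $(g, \check\Phi(c+i\,\cdot\,,\gamma x))$ reads
$$
\frac{1}{2\pi}\int_{-\infty}^{\infty} |\check\Phi(c+it,\gamma x)|^2\, dt = \int_{-\infty}^{\infty} |g(u)|^2\, du,
$$
which, after substituting the definition of $g$, is exactly \eqn{319a}.

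There is no substantive obstacle. The only tools required are Fourier inversion and Parseval for Schwartz functions, both of which apply to $g \in C_c^{\infty}(\RR)$ without further comment; the polynomial-decay estimate of Lemma~\ref{le33} is more than enough to justify any exchange of sum, integral, or contour that might be desired along the way.
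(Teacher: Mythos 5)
Your proposal is correct and takes essentially the same route as the paper: the paper invokes Mellin inversion for part (1) and performs the same substitution $w=e^u$ to recognize ${\check \Phi}(c+it,\gamma x)$ as a Fourier transform before applying Plancherel for part (2). You merely make the exponential substitution at the outset so that both parts become Fourier inversion and Parseval for the compactly supported smooth function $g(u)=\Phi(e^u)e(\gamma x e^u)e^{cu}$ --- the same argument, slightly repackaged.
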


\proof From the definition of ${\check \Phi}$ and Mellin inversion, we 
obtain for $w>0$,
\[
e(\lambda w) \Phi(w) = \frac{1}{2\pi i} \int_{c - i \infty}^{c+i\infty}
\check{\Phi}(s, \lambda) w^{-s} ds.
\]
We obtain \eqn{318a} on taking $w=\frac{n}{x}$, and $ \lambda= \gamma x$.

Take   $s=c+it$ in the definition of ${\check \Phi}$, 
and change variables $w=e^{u}$.  Thus 
\begin{eqnarray*}
\check{\Phi}(c+it, \lambda)  = 
\int_{0}^{\infty} \Phi(w) e(\lambda w) w^{c+it} \frac{dw}{w}  =  
\int_{-\infty}^{\infty} \Phi(e^u) e(\lambda e^u) e^{c u + i tu} du,
\end{eqnarray*}
and we recognize that $\check{\Phi}(c+it, \lambda)$, viewed as a function of $t$ with $c$ and $\lambda$ fixed, is the Fourier transform of $\Phi(e^u)e(\lambda e^u) e^{cu}$.  
Now Plancherel's theorem gives 
\[
\frac{1}{2 \pi} \int_{-\infty}^{\infty} |\check{\Phi}(c+it, \lambda)|^2 ct =
\int_{-\infty}^{\infty} | \Phi(e^u)e(\lambda e^u) e^{cu}|^2 du.
\]
which, with $\lambda=\gamma x$,  yields \eqn{319a}. 
\endproof

Using the method of stationary phase, we can show that $|{\check \Phi}(c+it,\lambda)| 
\ll (1+|\lambda|)^{-\frac 12}$ and this bound is an analog of the bound $|\tau(\chi)|\le \sqrt{q}$ 
for Gauss sums.  In our applications an $L^1$ version of this bound is sufficient, and 
we next derive such a bound from the $L^2$ estimate above.  \smallskip

%
%

\begin{lemma}~\label{le34}
Let $\lambda$ be real and suppose that $c\ge \frac 14$.  For any $\delta\ge 0$ and any 
$\epsilon>0$,  we have
\beql{331a}
\int_{-\infty}^{\infty} |\check{\Phi}(c+it, \lambda)| (1+|t|)^{\delta} 
dt \ll_{\Phi, c,\epsilon} (1+ |\lambda|)^{\frac{1}{2} + \delta+\epsilon}.
\eeq
\end{lemma}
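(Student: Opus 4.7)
The plan is to combine the rapid pointwise decay from Lemma~\ref{le33} with the Plancherel identity of Lemma~\ref{le32}(2), splitting the $t$-integration at a cutoff $|t|=T$ to be chosen at the end.

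On the central piece $|t|\le T$, Cauchy--Schwarz together with \eqn{319a} gives
\[
\int_{|t|\le T}|\check{\Phi}(c+it,\lambda)|(1+|t|)^{\delta}\,dt \;\le\; \left(\int_{|t|\le T}(1+|t|)^{2\delta}\,dt\right)^{1/2}\!\left(\int_{-\infty}^{\infty}|\check{\Phi}(c+it,\lambda)|^{2}\,dt\right)^{1/2} \ll_{\Phi,c} T^{\delta+\frac{1}{2}},
\]
since the second factor is $O_{\Phi,c}(1)$: indeed $|\Phi(e^{u})|^{2}e^{2cu}$ has compact support in $u$ (because $\Phi$ is supported away from $0$ and $\infty$) and is therefore integrable.

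For the tail $|t|>T$, I would invoke Lemma~\ref{le33}. Since $c\ge 1/4$ and $T\ge 1$, we have $|c+it|\asymp|t|$ throughout, so for any nonnegative integer $k$,
\[
|\check{\Phi}(c+it,\lambda)|\ll_{k,\Phi}\Big(\frac{1+|\lambda|}{|t|}\Big)^{k},
\]
and if $k>\delta+1$ then the tail contributes $\ll_{k,\Phi}(1+|\lambda|)^{k}T^{\delta+1-k}$.

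It remains to balance these two estimates. Taking $T=(1+|\lambda|)^{1+\eta}$ with $\eta=\epsilon/(\delta+\tfrac12)$ makes the central estimate exactly $(1+|\lambda|)^{\delta+1/2+\epsilon}$, and then picking $k$ large enough in terms of $\epsilon$ and $\delta$ that $(k-\delta-1)\eta\ge\tfrac12$ forces the tail to be $\le(1+|\lambda|)^{\delta+1/2}$, which is absorbed into the central term. The one subtlety to watch is that $T$ must be chosen slightly above the natural threshold $1+|\lambda|$, so that the Cauchy--Schwarz loss of size $T^{\delta+1/2}$ only concedes an arbitrarily small power of $1+|\lambda|$ beyond the target exponent $\delta+\frac{1}{2}$. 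There is no substantial obstacle; the argument is essentially the standard technique of converting $L^{2}$ information into $L^{1}$ information by exploiting rapid Fourier decay.
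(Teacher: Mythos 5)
Your proof is correct and follows essentially the same route as the paper: split the $t$-integral at a cutoff slightly above $1+|\lambda|$, use the rapid decay of Lemma~\ref{le33} on the tail, and use Cauchy--Schwarz together with the Plancherel identity of Lemma~\ref{le32}(2) on the central range. The only cosmetic difference is that you keep the weight $(1+|t|)^{\delta}$ inside the Cauchy--Schwarz factor, whereas the paper first bounds it by $(1+|\lambda|)^{\delta(1+\epsilon)}$; both give the same estimate.
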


\proof Let $\epsilon>0$ be given.  Consider first the range when $|t|>(1+|\lambda|)^{1+\epsilon}$.  
Using Lemma \ref{le33} we find that for any integer $k \ge 2$ 
\begin{eqnarray*}
\int_{|t|>(1+|\lambda|)^{1+\epsilon}} |{\check \Phi}(c+it,\lambda)| (1+|t|)^{\delta} dt 
&\ll_{k, \Phi}& \int_{|t| > (1+|\lambda|)^{1+\epsilon}} \Big( \frac{1+|\lambda|}{1+|t|}\Big)^k (1+|t|)^{\delta} dt 
\\
&\ll_{k, \Phi}& (1+|\lambda|)^{k -(k-\delta +1)(1+\epsilon)}. 
\\
\end{eqnarray*}
Choosing $k$ suitably large, this contribution is $\ll_{\Phi, \epsilon} 1$.  

Now consider the range $|t| \le (1+|\lambda|)^{1+\epsilon}$.  Note that 
$$
\int_{|t|\le (1+|\lambda|)^{1+\epsilon}} |{\check \Phi}(c+it, \lambda)| (1+|t|)^{\delta} dt 
\ll (1+|\lambda|)^{\delta(1+\epsilon)} \int_{|t|\le (1+\lambda)^{1+\epsilon}} |{\check \Phi}(c+it,\lambda)| dt,
$$ 
and using Cauchy-Schwarz 
we see that 
\begin{eqnarray*}
\int_{|t| \le (1+ \lambda)^{1+ \epsilon}} |\check{\Phi}(c+it, \lambda)| dt  &\le &
 \left( \int_{|t| \le (1+ |\lambda |)^{1+\epsilon}}  1 ~dt \right)^{\frac{1}{2}} 
\left( \int_{|t| \le (1+|\lambda |)^{1+\epsilon}} |\check{\Phi}(c+it, \lambda)|^2 dt \right)^{\frac{1}{2}}\\
&\le&  ( 1 + |\lambda|)^{\frac{1}{2}+\frac{1}{2}\epsilon}
\left( \int_{-\infty}^{\infty} |\check{\Phi}(c+it, \lambda)|^2 dt \right)^{\frac{1}{2}}\\
& \ll_{\Phi, c} & ( 1 + |\lambda|)^{\frac{1}{2} + \frac{1}{2}\epsilon},
\end{eqnarray*}
upon using the Plancherel formula from Lemma \ref{le32}(2). 
The Lemma follows.
\endproof

Combining the formulas \eqref{311a} and \eqref{318a} for $\alpha= \frac{a}{q} + \gamma$, for
$n \ge 1$ with $(n, q)=d$ we obtain
\beql{491a}
e(n \alpha) \Phi(\frac{n}{x}) =\Big( \frac{1}{\phi(q/d)} \sum_{\chi \imod{\frac{q}{d}}}
\tau(\bar{\chi}) \chi\Big( \frac{na}{d}\Big) \Big) \Big( \frac{1}{2 \pi i} \int_{-\infty}^{\infty}
\check{\Phi}( s, \gamma x) \Big(\frac{x}{n}\Big)^s dx\Big).
\eeq

Lemma~\ref{le31} and Lemma~\ref{le32} exhibit parallels between
the Dirichlet characters $\chi(n) \imod{q}$ (the $q$-aspect) 
and the continuous family of characters $\chi_t(n)=n^{it}$ (the $t$-aspect). 
Part (1) of each lemma expresses the (weighted) additive character 
in terms of multiplicative characters.   
Gauss sums appear explicitly in Lemma~\ref{le31}, while 
in Lemma~\ref{le32} the 
 function  $\check{\Phi}(c+it, \lambda)$ plays a role analogous to a Gauss sum, as it is
a weighted convolution of an additive quasicharacter 
specified by the parameter $\lambda$
against  a multiplicative quasicharacter 
by $\chi_{c+it}(n) =n^{c+it}$. The weight function $\Phi(x)$ limits
the range sampled, and  
 Lemma~\ref{le33} gives bounds on the size of this function.
  Part (2) of each lemma expresses an $L^2$-orthogonality relation. 
These orthogonality relations imply that the change of basis
 to multiplicative characters 
loses essentially nothing in the $L^2$-sense. However
in our application, the $L^1$-norm is more relevant,  
and there is a loss in moving from additive to multiplicative characters.
This is quantified in the square root losses in the both $q$ and $t$ aspects
paralleled  in the  ``Gauss sum" type
estimates  in Lemma~\ref{le31a} and Lemma~\ref{le34}, respectively.

\begin{remark} 
{\rm In Theorem \ref{th21} we would like to subsititute the sharp cutoff
 weight function $\Phi(x) = \chi_{[0,1]}(x)$, but  it   is neither compactly supported nor
continuous on $\RR_{>0}$, and we only obtain a lower bound \eqref{552}
rather than the expected asymptotic formula.
 Here we note in passing that the transform $\check{\Phi}(s, \lambda)$ given in \eqref{331}
 is an interesting special function. Namely, for $Re(\lambda)<0$, we have
\beql{381}
\check{\Phi}(s, \lambda) =\int_{0}^{1} e^{\lambda x} x^{s-1}dx = (-\lambda)^{-s} \gamma(s, -\lambda),
\eeq
where $\gamma(s, z)= \int_{0}^{z} e^{-u} u^{s-1} du$ is the incomplete gamma function.
The incomplete gamma function is related  to Kummer's  
confluent hypergeometric function
$$
M(a, b, z) := {}_1F_{1} (a, b; z) = 1 + \frac{a}{b} \frac{z}{1!} + \frac{a(a+1)}{b(b+1)} \frac{z^2}{2!} +
\frac{a(a+1)(a+2)}{b(b+1)(b+2)}\frac{z^3}{3!} +  \cdots,
$$
by special function formulas (see Chapter 13 of  \cite{AS72}) which yield
\beql{382}
s(-\lambda)^{-s} \gamma(s,  -\lambda)= M(s, s+1, \lambda) =e^{\lambda}M(1, s+1, \lambda).
\eeq 
The last equality is a special case of Kummer's transformation
$M(a, b, z)= e^z M(b-a, b, z).$ The known analytic properties of the
function $M(a, b, z)$  (in three complex variables)  give an analytic continuation of 
$\frac{1}{\Gamma(s)} \check{\Phi}(s, \lambda)$ to  an entire function of two complex variables.
It follows that $\check{\Phi}(s, \lambda)$ has no singularities   in the $\lambda$-variable,  but 
for generic $\lambda$ it has
 simple poles in the $s$-variable  at the nonpositive integers. }
\end{remark}

\section{A brief survey of results on $\Psi(x,y)$}
 \setcounter{equation}{0}

\noindent In this section we collect together several results  
on estimates for $\Psi(x,y)$.  A comprehensive survey of this topic is 
given by Hildebrand and Tenenbaum \cite{HT93}, and we 
give here a very brief description of the salient points.  

When $y$ is not too small in relation to $x$, then on writing $y=x^{\frac 1u}$, 
we have that $\Psi(x, y) \sim  x \rho(u)$ where $\rho$ is the Dickman 
function which is defined by $\rho(u)= 1$ for $0\le u\le 1$, and for $u\ge 1$ is 
defined by the differential-difference equation $u\rho^{\prime}(u) = -\rho(u-1)$.  
The most precise version of this result is due to Hildebrand \cite{Hi86b} 
who showed that for all large  $x$ and $y\ge \exp( (\log\log x)^{5/3 + \epsilon})$
we have 
\beql{121a}
\Psi(x, y) = x \rho(u) \Big( 1+ O_{\epsilon}\Big( \frac{u\log(u+1)}{\log x}\Big) \Big).
\eeq 

Here we are 
particularly interested in the range when $y$ is a power of $\log x$.  This 
is the relevant range for our main results, but it lies outside the 
range covered by Hildebrand's (\ref{121a}).   Indeed in this range, the 
behavior of $\Psi(x,y)$ is known to be sensitive to the fine distribution of primes and
 location of the zeros of $\zeta(s)$.
In 1984 Hildebrand \cite{Hi84} showed that the Riemann hypothesis is equivalent
to the assertion that for each $\epsilon >0$ and $1 \le u \le y^{1/2 - \epsilon}$
there is a uniform estimate
\beql{122a}
\Psi(x,y) =  x \rho(u) \exp( O_{\epsilon} ( y^{\epsilon} ) ).
\eeq
Moreover, assuming the Riemann hypothesis, he showed that 
for each $\epsilon >0$ and $1 \le u \le y^{1/2 - \epsilon}$
the stronger uniform estimate
\beql{122b}
\Psi(x,y) =   x \rho(u) \exp \Big( O_{\epsilon} \Big( \frac{\log (u+1)}{\log y} \Big) \Big)
\eeq
holds. 
On choosing $y = (\log x)^{\alpha}$ for $\alpha > 2$, this latter estimate yields 
\beql{123c}
\Psi(x, (\log x)^{\alpha} ) \asymp  x \rho(u),
\eeq
which  provides only an order of magnitude estimate for the size of $\Psi(x,y)$. 
Furthermore if
the Riemann hypothesis is false then $\Psi(x,y)$ must sometimes
exhibit large oscillations away from the value $x \rho(u)$ for some $(x,y)$ in
these ranges.  In 1986 Hildebrand \cite{Hi86} obtained further results 
indicating  that when
$y < (\log x)^{2- \epsilon}$ one should not expect  any smooth asymptotic
formula for $\Psi(x,y)$ in terms of the $y$-variable to hold.

Since we assume GRH in this paper, we may access these conditional 
results of Hildebrand. However a less explicit asymptotic formula for $\Psi(x,y)$ 
developed by  Hildebrand and Tenenbaum \cite{HT86} is more useful for us. 
Before discussing the results from their saddle point method, we 
note a useful, and uniform, elementary asymptotic for $\log \Psi(x,y)$; see 
Theorem 1.4 of \cite{HT93}.    
Uniformly for all $x\ge y\ge 2$ there holds 
$$ 
\log \Psi(x,y) = \Big(\frac{\log x}{\log y} \log \Big( 1+\frac{y}{\log x}\Big) 
+ \frac{y}{\log y} \log \Big( 1+ \frac{\log x}{y}\Big)\Big) \Big( 1+ 
O\Big(\frac{1}{\log y}+\frac{1}{\log \log x}\Big)\Big).
$$ 
If $y= (\log x)^{\alpha}$, with $\alpha \ge 1$ then it follows 
that 
\beql{S801}
\Psi(x,y) = x^{1-\frac{1}{\alpha}} \exp\Big(O\Big(\frac{\log x}{\log \log x}\Big)\Big).
\eeq

We define
$$
\zeta(s; y) := \sum_{n \in \sS(y)} n^{-s}=
\prod_{p \le y} \Big(1- \frac{1}{p^{s}}\Big)^{-1},
$$
 and by Perron's formula we may write, for any $c>0$, 
\beql{S802}
\Psi(x,y) =\frac{1}{2\pi i } \int_{c-i\infty}^{c+i\infty} \zeta(s;y)x^s \frac{ds}{s}.
\eeq
The method of Hildebrand and Tenenbaum makes a careful choice 
for the line of integration $(c)$.   Precisely, they choose $c$ such 
that the quantity $x^{\sigma} \zeta(\sigma;y)$ is minimized over 
all $0 <\sigma\le \infty$.   With a little calculus, this quantity is minimized when 
$c=c(x,y)$ is the unique solution to 
\beql{905}
-\phi_1(c;y):= - \frac{d}{dc} \log \zeta(c; y) = \sum_{p \le y} \frac{\log p}{p^c -1}  =  \log x, 
\eeq
where $\phi_j(c;y)$ denotes the $j$-th derivative with respect to $s$ of $\log \zeta(s;y)$.    
The quantity\footnote{Hildebrand and Tenenbaum denote this
quantity $\alpha(x,y)$ and abbreviate it to $\alpha$.}
$c(x,y)$ is a saddle-point for the function $x^s \zeta(s; y)$ in the sense that 
$|x^s \zeta(s;y)|$ is minimized over real values of $s \in (0,\infty)$, but is 
maximized over values $s=c+it$ for $t\in \RR$.   With this 
choice for the line of integration, Hildebrand and Tenenbaum found that the 
integral in (\ref{S802}) is dominated by the portion of the 
integral near the real axis, and were able to evaluate this contribution.  We now quote 
their result, see Theorem 1 of \cite {HT86}.  \smallskip
%

\begin{theorem}~\label{th91}
{\em (Hildebrand-Tenenbaum)} 
We have uniformly for $x \ge y \ge 2$, 
\beql{907} 
\Psi(x,y) = \frac{x^c \zeta(c;y)}{c\sqrt{2\pi \phi_2(c,y)}}
\left( 1+ O\left( \frac{1}{u}+\frac{\log y}{y}\right) \right),
\eeq
in which $c=c(x,y)$, and $y=x^{\frac 1u}$. 
\end{theorem}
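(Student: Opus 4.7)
The plan is to apply the saddle point method to the Perron integral
\beql{sp1}
\Psi(x,y) = \frac{1}{2\pi i}\int_{c-i\infty}^{c+i\infty} \zeta(s;y)\, x^s\, \frac{ds}{s},
\eeq
with the contour shifted to the vertical line $\mathrm{Re}(s)=c=c(x,y)$ determined by the saddle-point equation \eqref{905}, namely $-\phi_1(c;y)=\log x$. The core idea is that the integrand $|x^{c+it}\zeta(c+it;y)/(c+it)|$ has a sharp maximum at $t=0$ on this vertical line, and essentially all of the mass of the integral comes from a small neighborhood of $t=0$.

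First I would expand $\log \zeta(c+it;y)$ in a Taylor series around $t=0$:
\[
\log \zeta(c+it;y)=\log \zeta(c;y)+it\,\phi_1(c;y)-\frac{t^2}{2}\phi_2(c;y)-\frac{it^3}{6}\phi_3(c;y)+O(t^4 \phi_4(c;y)).
\]
Combining with $x^{c+it}=x^c \exp(it\log x)$ and invoking the saddle-point relation $\log x=-\phi_1(c;y)$, the linear term in $t$ cancels exactly. Over a ``central'' window $|t|\le T_0:=\phi_2(c;y)^{-1/2}(\log y)^{1/2}$, say, the integrand is well approximated by
\[
\frac{x^c \zeta(c;y)}{c}\exp\!\Big(-\tfrac12 t^2 \phi_2(c;y)\Big)\Bigl(1+O\Bigl(|t|^3\phi_3(c;y)+\frac{|t|}{c}\Bigr)\Bigr),
\]
where I have also replaced $(c+it)^{-1}$ by $c^{-1}(1+O(|t|/c))$. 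Extending the Gaussian integral to all of $\RR$ (with negligible tail error) gives the leading term
\[
\frac{x^c \zeta(c;y)}{c\sqrt{2\pi \phi_2(c;y)}}.
\]
The cubic remainder contributes a relative error of order $\phi_3(c;y)/\phi_2(c;y)^{3/2}$, and standard estimates of Hildebrand-Tenenbaum for the derivatives $\phi_j(c;y)$ translate this into the $1/u$ term claimed in \eqref{907}.

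The main obstacle is handling the ``off-center'' portion of the contour, $|t|>T_0$, where the Gaussian approximation to $\log\zeta(c+it;y)$ breaks down. Here one must prove a genuine decay estimate of the shape
\[
\bigl|\zeta(c+it;y)\bigr|\le \zeta(c;y)\exp\!\bigl(-\eta(c,y,t)\bigr),
\]
with an exponent $\eta(c,y,t)$ that remains large enough to beat the length $\int dt/|c+it|$ of the remaining contour. One proves this by writing $\log|\zeta(c+it;y)|=\sum_{p\le y}\sum_{k\ge 1}\cos(kt\log p)/(kp^{kc})$ and extracting, via a mean-value/large-sieve argument for primes up to $y$ combined with bounds on $\phi_2(c;y)$, a net negative contribution of size $\gg \phi_2(c;y)t^2$ for moderate $t$ and $\gg \log y/y$ for the truly far range. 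This last piece is exactly what produces the $\log y/y$ term in the error in \eqref{907}. Finally, piecing together the central Gaussian evaluation and the off-center decay estimate, and invoking the uniform behaviour \eqref{S801} to verify that the combined error is subsumed by $O(1/u+\log y/y)$, yields the stated asymptotic.
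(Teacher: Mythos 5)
The paper does not actually prove Theorem \ref{th91}: it is quoted from Theorem 1 of Hildebrand--Tenenbaum \cite{HT86}, and the key ingredients of that proof are recorded separately as Lemmas \ref{le91}--\ref{le93}. Your sketch follows precisely that saddle-point route, so in outline you are reconstructing the cited argument rather than giving an alternative; but two steps as written would fail. The serious one is your treatment of the far range: you assert a uniform bound $|\zeta(c+it;y)|\le\zeta(c;y)\exp(-\eta(c,y,t))$ valid for all $|t|>T_0$, with $\eta\gg\log y/y$ ``for the truly far range''. No such bound exists. Since $\zeta(s;y)$ is a finite Euler product over $p\le y$, it is an almost periodic function of $t$ on the line $\mathrm{Re}(s)=c$; by Kronecker's theorem there are arbitrarily large $t$ for which every $t\log p$ with $p\le y$ lies within $\epsilon$ of an integer multiple of $2\pi$, so $|\zeta(c+it;y)|$ returns arbitrarily close to $\zeta(c;y)$ infinitely often. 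The decay estimate (Lemma \ref{le91}, i.e.\ Lemma 8 of \cite{HT86}) holds only for $1/\log y\le|t|\le y$; the range $|t|>y$ must be discarded by a different mechanism, namely the oscillation of $x^{s}/s$ via a truncated (quantitative) Perron formula, which is exactly what Lemma \ref{le92} accomplishes. Consequently your attribution of the $\log y/y$ term in \eqref{907} to a decay of $\zeta(c+it;y)$ at large $t$ is unsupported.

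The second gap is quantitative: in the central range you bound the cubic Taylor term in absolute value, giving a relative error of order $\phi_3(c;y)/\phi_2(c;y)^{3/2}$. For $y\ge\log x$ one has $\phi_2\asymp\log x\log y$ and $\phi_3\asymp\log x(\log y)^2$, so this ratio is of size $\sqrt{\log y/\log x}=u^{-1/2}$, which does not give the error $O(1/u)$ claimed in \eqref{907}. To reach $O(1/u)$ you must exploit that the cubic (odd) term integrates to zero against the Gaussian and push the expansion to fourth order, so that the true error is $O(\phi_3^2/\phi_2^3+\phi_4/\phi_2^2)=O(1/u)$; this cancellation is built into Lemma \ref{le93} (Lemma 11 of \cite{HT86}). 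With these two repairs --- handling $|t|>y$ by a truncated Perron argument rather than pointwise decay, and using the odd-moment cancellation in the Gaussian evaluation --- your outline coincides with the original Hildebrand--Tenenbaum proof.
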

 
 The following result, Theorem 2 of \cite{HT86}, concerns the size of $c(x,y)$ and of the 
denominator in \eqn{907}, involving
$$
\phi_2(c;y) = \frac{d^2}{dc^2} \log \zeta(c;y) =  \sum_{p \le y} \frac{p^c (\log p)^2}{(p^c -1)^2}.
$$
\smallskip

%
\begin{theorem}~\label{th92}
{\em (Hildebrand-Tenenbaum)} We have uniformly for $x \ge y \ge 2$,
\beql{908}
c(x,y) = \frac{\log \left(1+ \frac{y}{\log x}\right)}{\log y}\left( 1+
O\left(\frac{\log \log (1+y)}{\log y}\right) \right),
\eeq
and
\beql{909}
\phi_2(c(x,y), y) =\left( 1 + \frac{\log x}{y}\right) \log x
\cdot \log y 
\left( 1+ O \left( \frac{1}{\log (1+u)}+ \frac{1}{\log y}\right)\right).
\eeq
\end{theorem}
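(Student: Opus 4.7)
The plan is to prove both asymptotics in (\ref{908}) and (\ref{909}) by converting the defining sums for $\phi_1(c;y)$ and $\phi_2(c;y)$ into integrals via Abel summation against $\theta(t) = \sum_{p \le t} \log p$, and then inverting the equation $\phi_1(c;y) = \log x$ to extract $c(x,y)$. Since the claimed uniformity extends all the way down to $y = 2$, only Chebyshev-type estimates $\theta(t) = t + O(t/\log t)$ (rather than refined PNT bounds depending on zero-free regions) should be used, which is consistent with the relatively modest relative error $\log\log(1+y)/\log y$ that appears in (\ref{908}).

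First I would estimate $\phi_1(c;y)$. Abel summation yields
\[
\phi_1(c;y) = \frac{\theta(y)}{y^c - 1} + c\int_2^{y} \frac{\theta(t)\,t^{c-1}}{(t^c - 1)^2}\, dt,
\]
and inserting $\theta(t) = t + O(t/\log t)$ reduces the problem to evaluating the main integral $I(c,y) := \int_2^y c\,t^c (t^c - 1)^{-2}\, dt$. The integrand changes character at the threshold $t_0 := e^{1/c}$: for $t \le t_0$ one has $t^c - 1 = c\log t\,(1 + O(c\log t))$, while for $t \ge t_0$ one has $t^c - 1 = t^c(1+O(t^{-c}))$. Splitting at $t_0$ and evaluating each piece by elementary means leads to an asymptotic formula for $\phi_1(c;y)$ in which the two regimes merge naturally into a single expression governed by the quantity $\log(1 + y/\log x)$.

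Setting $\phi_1(c;y) = \log x$ and inverting the resulting implicit equation yields the claimed $c(x,y) = \log(1+y/\log x)/\log y$ to the stated precision. The unified formula correctly captures both the \emph{dense} regime $y \gg \log x$ (where $c$ is bounded away from $0$ and $(1-c)\log y \sim \log\log x$) and the \emph{sparse} regime $y \ll \log x$ (where $c \sim y/(\log x \cdot \log y)$ is small and Case $c\log y \le 1$ applies). The same Abel summation machinery applied to the sum defining $\phi_2(c;y)$, followed by substitution of the value of $c = c(x,y)$ already determined, produces the factorization $(1 + \log x/y) \cdot \log x \cdot \log y$ appearing in (\ref{909}); the factor $(1 + \log x/y)$ arises from the threshold behavior of the corresponding integrand at the same $t_0 = e^{1/c}$, and interpolates between the two regimes in exactly the same fashion as $\log(1+y/\log x)$ does for $\phi_1$.

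The principal obstacle is bookkeeping: obtaining the precise uniform relative errors $O(\log\log(1+y)/\log y)$ in (\ref{908}) and $O(1/\log(1+u) + 1/\log y)$ in (\ref{909}) requires careful handling of the boundary contribution in the Abel summation near $t = 2$ (where Chebyshev's estimate is weakest), together with clean matching at the threshold $t_0$. No new ideas are needed beyond the outline above, but propagating the unavoidable Chebyshev-level error uniformly across both regimes without inflating the stated accuracy, and in particular showing that the inversion step does not amplify this error, will demand some attention to detail.
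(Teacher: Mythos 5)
The first thing to note is that the paper does not prove this statement: Theorem \ref{th92} is quoted verbatim as Theorem 2 of \cite{HT86}, so there is no internal proof to compare against. Your outline is, in effect, a reconstruction of the Hildebrand--Tenenbaum argument itself: partial summation of $\phi_1(c;y)=\sum_{p\le y}\log p/(p^c-1)$ against $\theta(t)$, elementary prime-counting input, a split of the resulting integral at the scale $t_0=e^{1/c}$ where $t^c$ is of bounded size, and inversion of the saddle-point equation $\phi_1(c;y)=\log x$, with the same machinery then applied to $\phi_2$. The strategy is sound and correctly calibrated: the two regimes you identify ($y\gg\log x$ with $c$ bounded away from $0$, and $y\ll\log x$ with $c\asymp y/(\log x\,\log y)$) are the right ones; the error $\log\log(1+y)/\log y$ in (\ref{908}) is indeed what Chebyshev/Mertens-level input forces (after the substitution $t=e^v$ it comes from comparing $\int e^{(1-c)v}\,dv/v$ with $\int e^{(1-c)v}\,dv$ when $(1-c)\log y=O(1)$, and from the $\log(1-c)$ correction in the inversion); and the factor $1+\log x/y$ in (\ref{909}) does interpolate between $\phi_2\asymp\log x\log y$ and $\phi_2\asymp(\log x)^2\log y/y$ exactly as you describe.

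Two cautions. First, the phrase ``reduces the problem to evaluating the main integral'' is slightly misleading: in the sparse regime $y\ll\log x$ the boundary term $\theta(y)/(y^c-1)\approx y/(c\log y)$ dominates (the integral term is smaller by a factor of order $\log y$), so it must be carried along, not discarded; your displayed Abel summation does retain it, but the subsequent discussion should treat it as a main term in that regime. Second, what you have written is a plan rather than a proof: the uniform evaluation of the split integrals down to $y=2$, the matching at $t_0$, the verification that inverting $\phi_1(c;y)=\log x$ only costs a relative error $O(\log\log(1+y)/\log y)$, and the parallel computation for $\phi_2$ are all deferred to ``bookkeeping.'' Since the paper itself simply cites \cite{HT86}, the economical course is to do the same; if you want a self-contained treatment, the outline is a credible skeleton of the known proof, but the quantitative core still has to be executed.
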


An immediate consequence of \eqn{908}  is  that for  fixed $\delta>0$, and 
$y =(\log x)^{\kappa}$ with $\kappa \ge 1+\delta$ we have 
\beql{819a}
c(x,y) = 1- \frac{1}{\kappa} + 
O_{\delta}\Big(\frac{\log \log y}{\log y}\Big).
\eeq

While the asymptotic in Theorem \ref{th91} may be a little 
difficult to parse, it provides an elegant and useful means 
of obtaining the ``local behavior" of $\Psi(x,y)$, given as
follows -- see Theorem 3 of  \cite{HT86}.  \smallskip
 
%
\begin{theorem}~\label{th93}
{\em (Hildebrand-Tenenbaum)} 
We have uniformly for $x \ge y \ge 2$ and $1 \le k \le y$,
\beql{905a}
\Psi(kx, y) = \Psi(x,y) k^{c(x,y)}
\left( 1+ O\left(\frac{\log y}{\log x} + \frac{\log y}{y}\right)\right).
\eeq
\end{theorem}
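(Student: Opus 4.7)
The plan is to derive Theorem~\ref{th93} by applying the Hildebrand--Tenenbaum saddle-point formula (Theorem~\ref{th91}) to both $\Psi(kx,y)$ and $\Psi(x,y)$, and then controlling the ratio via a Taylor expansion around the saddle point for $x$. Setting $c = c(x,y)$ and $c' = c(kx,y)$, I first apply Theorem~\ref{th91} to both endpoints. Since $k \le y$ implies $\log(kx)/\log y$ differs from $u$ by at most $1$, the relative error $1/u + \log y/y$ has the same order at both $x$ and $kx$, giving
$$
\frac{\Psi(kx, y)}{\Psi(x, y)} = \frac{(kx)^{c'}\zeta(c';y)}{x^{c}\zeta(c;y)} \cdot \frac{c}{c'}\sqrt{\frac{\phi_{2}(c;y)}{\phi_{2}(c';y)}}\Bigl(1 + O\bigl(\tfrac{\log y}{\log x}+\tfrac{\log y}{y}\bigr)\Bigr).
$$

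Next, I would show that the two saddle points are close: since $-\phi_1(c;y) = \log x$ and $-\phi_1(c';y) = \log x + \log k$, the mean value theorem together with Theorem~\ref{th92} (and a short bootstrap to confirm $c^*$ lies near $c$) yields
$$
c' - c \ll \frac{\log k}{(1+\log x/y)\log x \,\log y}.
$$
Because $\frac{d}{ds}\log\zeta(s;y) = \phi_1(s;y)$ has derivative $\phi_2(s;y)$, Taylor's theorem gives
$$
\log\frac{\zeta(c';y)}{\zeta(c;y)} = \phi_1(c;y)(c'-c) + O\bigl(\phi_2(c;y)(c'-c)^{2}\bigr) = -\log x\cdot(c'-c) + O\bigl(\tfrac{\log y}{\log x}\bigr),
$$
where the remainder used $(\log k)^{2} \le (\log y)^{2}$ together with the estimate for $\phi_2$. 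Exponentiating yields $\zeta(c';y)/\zeta(c;y) = x^{-(c'-c)}(1 + O(\log y/\log x))$.

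The crucial cancellation is then
$$
\frac{(kx)^{c'}\zeta(c';y)}{x^{c}\zeta(c;y)} = k^{c'} \cdot x^{c'-c} \cdot x^{-(c'-c)}\bigl(1+O(\tfrac{\log y}{\log x})\bigr) = k^{c'}\bigl(1+O(\tfrac{\log y}{\log x})\bigr).
$$
Writing $k^{c'} = k^{c}\cdot k^{c'-c}$, the bound $(c'-c)\log k \ll \log y/\log x$ (again using $k \le y$) shows $k^{c'-c} = 1 + O(\log y/\log x)$. To conclude, the ratio $c/c' = 1+O((c'-c)/c)$ is handled via the asymptotic $c \asymp \log(1+y/\log x)/\log y$ from Theorem~\ref{th92}, which leads to $(c'-c)/c \ll \log y/\log x$. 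The ratio $\phi_2(c;y)/\phi_2(c';y)$ is controlled by another Taylor expansion using the elementary bound $|\phi_3(s;y)| \ll \phi_2(s;y)\log y$, arising from the fact that each term in $\phi_j$ carries one extra factor of $\log p \le \log y$.

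The main obstacle is ensuring that all these Taylor remainders are uniformly dominated by the stated error $O(\log y/\log x + \log y/y)$ across the wide range $x \ge y \ge 2$. The delicate regime is $y$ near $\log x$, where $c$ itself is small and the relative perturbations in $c$, $\zeta(c;y)$, and $\phi_2(c;y)$ are of comparable magnitude, so the various error contributions must be tracked carefully to avoid inflation. A clean bootstrap bound on $c^{*}$ in the mean value step, combined with the robust form of the estimate for $\phi_2$ in Theorem~\ref{th92}, is what makes the whole argument work uniformly.
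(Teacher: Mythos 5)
Your proposal reconstructs a proof that this paper does not actually contain: Theorem~\ref{th93} is quoted verbatim from Hildebrand--Tenenbaum (Theorem 3 of \cite{HT86}), and the paper's ``proof'' is just that citation. So there is nothing in the paper to compare against; what you have written is essentially the standard (and, up to presentation, the original) derivation: apply the saddle-point formula of Theorem~\ref{th91} at $x$ and at $kx$, bound $c(x,y)-c(kx,y)$ by the mean value theorem applied to the saddle-point equation together with the $\phi_2$ asymptotic of Theorem~\ref{th92}, and exploit the key cancellation in which the first-order Taylor term $\phi_1(c;y)(c'-c)=-\log x\,(c'-c)$ exactly kills the factor $x^{c'-c}$, leaving $k^{c'}=k^{c}(1+O(\log y/\log x))$. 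That structure, including the treatment of the $c/c'$ and $\sqrt{\phi_2(c;y)/\phi_2(c';y)}$ prefactors, is correct, and since $\log y/\log x\le 1$ for $x\ge y$ the exponentiation of the Taylor remainders is legitimate.

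One step is, however, wrong as stated: the ``elementary bound'' $|\phi_3(s;y)|\ll \phi_2(s;y)\log y$ fails in part of the claimed range of uniformity. At the saddle point, when $y\ll \log x$ one has $c=c(x,y)\asymp \log(1+y/\log x)/\log y$ small, the terms with $p^c-1\asymp c\log p$ dominate, and then $\phi_2(c;y)\approx \pi(y)/c^2$ while $|\phi_3(c;y)|\approx 2\pi(y)/c^3$, so $|\phi_3|/\phi_2\asymp 1/c\asymp (\log x)(\log y)/y\gg \log y$: the extra factor per term is $(\log p)(p^c+1)/(p^c-1)$, not merely $\log p$. The correct uniform bound is $|\phi_3(c;y)|\ll \phi_2(c;y)\bigl(\log y+1/c\bigr)\ll \phi_2(c;y)\,\log y\,(1+\log x/y)$, and with this the same computation still closes, because the factor $(1+\log x/y)$ cancels against the one in your bound for $c-c'$, giving $|\phi_2(c';y)/\phi_2(c;y)-1|\ll \log k/\log x\le \log y/\log x$ as required. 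So the theorem survives, but the $\phi_2$-ratio step needs this repair to be uniform for all $x\ge y\ge 2$; in the regime where this paper actually invokes Theorem~\ref{th93} (namely $y\ge(\log x)^{2+\delta}$, where $c$ is bounded below) your original bound is harmless.
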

 
This result can be used to show that 
the  behavior of  $\Psi(x, y)$ 
with $y= (\log x)^{\kappa}$ changes
qualitatively at $\kappa=1$, having a ``phase
transition'' there.  As $x\to \infty$, Theorem~\ref{th93} implies that 
when $\kappa \le 1$ one has   
\[
\Psi(kx, y) = (1+o(1))\Psi(x, y), 
\]
whereas for $\kappa >1$ one has 
\[
\Psi(kx,y) = \left(k^{1-\frac{1}{\kappa}} +o(1)\right)\Psi(x,y).
\] 
 
For later use, we state three estimates of Hildebrand and
Tenenbaum (restricted to the range $y\ge \log x$) as lemmas.  \smallskip
 
%
\begin{lemma}~\label{le91}
{\em (Hildebrand and Tenenbaum)}
Let $x$ and $y$ be large with $y\ge \log x$, and let $s = c+i \tau$ with
$c=c(x,y)$ and real $\tau$.  Uniformly in the region $1/\log y \le |\tau|\le y$ we have 
\beql{912}
\Big|\frac{\zeta(s; y)}{\zeta(c;y)}\Big| \ll 
\exp \Big( - c_0 \frac{u \tau^2}{(1-c)^2 + \tau^2} \Big).
\eeq
\end{lemma}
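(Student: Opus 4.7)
The plan is to work with the Euler product and reduce the statement to a lower bound for a single prime sum. Starting from
\[
\log\zeta(s;y) = \sum_{p\le y}\sum_{k\ge 1}\frac{1}{k p^{ks}},
\]
one obtains, for $s=c+i\tau$,
\[
\log\Bigl|\frac{\zeta(s;y)}{\zeta(c;y)}\Bigr| = -\sum_{p\le y}\sum_{k\ge 1}\frac{1-\cos(k\tau\log p)}{k p^{kc}}.
\]
Since every term on the right is non-negative, discarding $k\ge 2$ gives the clean upper bound
\[
\log\Bigl|\frac{\zeta(s;y)}{\zeta(c;y)}\Bigr| \le -\sum_{p\le y}\frac{1-\cos(\tau\log p)}{p^c},
\]
so the task reduces to producing a lower bound of order $u\tau^2/((1-c)^2+\tau^2)$ for the remaining prime sum.

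Next I would apply the elementary inequality $1-\cos\theta \ge (2/\pi^2)\min(\theta^2,\pi^2)$, which splits the prime sum naturally at the threshold $P_0 := \min(y,\,e^{\pi/|\tau|})$. Primes with $p\le P_0$ contribute at least $(2/\pi^2)\tau^2 \sum_{p\le P_0}(\log p)^2/p^c$, a sum that can be estimated by partial summation and the prime number theorem. Primes with $P_0<p\le y$ (present only when $|\tau|>\pi/\log y$) contribute $\sum_{P_0<p\le y}(1-\cos(\tau\log p))/p^c$; here one shows via PNT and integration by parts that the oscillatory sum $\sum_p \cos(\tau\log p)/p^c$ is smaller than the corresponding diagonal by a factor of roughly $1/(|\tau|\log P_0)$, so a positive fraction of the diagonal $\sum_{P_0<p\le y}1/p^c$ survives.

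The two regimes $|\tau|\le 1-c$ and $|\tau|>1-c$ are then analyzed separately. In the first regime one has $(1-c)^2+\tau^2 \asymp (1-c)^2$, and the short-prime contribution gives $\gg \tau^2\,u/(1-c)^2$ once one invokes the saddle-point relation \eqn{905} in the form $\sum_{p\le y}(\log p)/p^c \asymp u$, which holds because $c=1-1/\kappa+o(1)$ by \eqn{819a} and $p^c$ is already bounded below on the range of interest. In the second regime $(1-c)^2+\tau^2 \asymp \tau^2$ and the target size is simply $u$; combining the contributions from the short and long ranges yields this, the long range delivering the dominant $\gg u$ via the oscillation bound just described.

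The main obstacle is obtaining uniform control over the transition zone $|\tau|\asymp 1-c$, and more generally ensuring that the implicit constants in the PNT-based estimates for $\sum_{p\le Y}\cos(\tau\log p)/p^c$ and $\sum_{p\le Y}(\log p)^j/p^c$ do not degenerate as $c\to 1^-$ or as $|\tau|$ sweeps across the full band $1/\log y\le|\tau|\le y$. Classical PNT with standard error term is in principle sufficient, but careful bookkeeping is needed because the dominant behavior switches as $P_0$ moves from $y$ (for $|\tau|$ near $1/\log y$) down toward a constant (for $|\tau|$ near $y$); matching the two regime estimates into the single expression $u\tau^2/((1-c)^2+\tau^2)$ with a single uniform constant $c_0$ is the delicate part of the argument.
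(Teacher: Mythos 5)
The paper does not prove this lemma at all: it is quoted verbatim from Hildebrand--Tenenbaum, and the paper's ``proof'' is the citation ``special case of Lemma 8 of \cite{HT86}.'' Your opening reduction is fine and is indeed the standard first step ($\log|\zeta(s;y)/\zeta(c;y)|\le -\sum_{p\le y}(1-\cos(\tau\log p))p^{-c}$), but the rest of the sketch has genuine gaps. First, your regime--$1$ argument does not close. The relation \eqref{905} reads $\sum_{p\le y}\log p/(p^c-1)=\log x=u\log y$, not $\sum_{p\le y}(\log p)/p^c\asymp u$, and the appeal to \eqref{819a} is illegitimate here: the lemma is asserted for all $y\ge\log x$, where $c(x,y)$ can tend to $0$ and $1-1/\kappa$ is not a valid approximation. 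More concretely, in the transition zone $|\tau|\asymp 1-c$ (write $\xi=(1-c)\log y$, so $y^{1-c}=e^{\xi}\asymp u\xi$ for large $u$) the short-prime piece with your cut $P_0=e^{\pi/|\tau|}$ is only $\asymp \tau^2\,P_0^{1-c}\log P_0/(1-c)=O(\tau^2/(1-c)^2)=O(1)$, while the target is $\asymp u$; and your long-range gain ``$1/(|\tau|\log P_0)$'' is just $1/\pi$ with that choice of $P_0$, whereas a first-derivative integration by parts actually compares $|\tau|$ with $1-c$ (the effective length of the weight $e^{(1-c)v}/v$), so for $|\tau|\lesssim 1-c$ it shows no positive proportion of the diagonal survives. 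What rescues this zone is a sharper computation, e.g.\ evaluating the main term exactly so that one wins the factor $1-(1-c)/\sqrt{(1-c)^2+\tau^2}\gg \tau^2/((1-c)^2+\tau^2)$, which is precisely the shape of the exponent in \eqref{912}; your two-regime patchwork as described does not produce it.

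Second, and more fundamentally, ``classical PNT with standard error term'' cannot control the oscillatory prime sums uniformly for $1/\log y\le|\tau|\le y$. Partial summation of $\sum_{p\le Y}p^{-c-i\tau}$ against $\pi(t)-\mathrm{li}(t)$ introduces a factor $|\tau|$ multiplying the PNT error, giving an error of size roughly $|\tau|$ after the exponential factor is integrated out; since the diagonal you must beat is only $\asymp u$, this is fatal whenever $|\tau|\gg u$, which occupies most of the permitted range (typically $y\gg u$). What is actually needed is a $\tau$-aspect estimate for $\sum_{n\le Y}\Lambda(n)n^{-i\tau}$, i.e.\ input from the zero-free region of $\zeta(s)$ at height $\tau$ (this is exactly what Hildebrand--Tenenbaum's preparatory lemmas supply before their Lemma 8), or, in the conditional setting of this paper, RH-type bounds as in Section 5. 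Without that ingredient the upper part of the stated $\tau$-range is out of reach of your method, and the single uniform constant $c_0$ you aim for cannot be obtained. If you do not want to supply these inputs, the honest proof of this lemma is the paper's own: cite Lemma 8 of \cite{HT86}.
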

 
\proof This is a special case of Lemma 8 of \cite{HT86}.
\endproof 
\smallskip

%
\begin{lemma}~\label{le92}
{\em (Hildebrand and Tenenbaum)}
Let $0< \beta < 1$ be fixed.
Then uniformly for $x\ge y \ge 2$, 
\begin{eqnarray}~\label{921}
\Psi(x,y) & = & 
\frac{1}{2\pi i} \int_{c - \frac{i}{\log y}}^{c+\frac{i}{\log y}}
\zeta(s;y) \frac{x^s}{s} ds \nonumber \\
&& ~~~+ O_{\beta} \Big( x^{c}\zeta(c,y) 
\Big( \exp ( -(\log y)^{\frac{3}{2} - \beta}) +
\exp \Big(-c_6 \frac{u}{(\log 2u)^2}\Big) \Big)\Big),
\end{eqnarray}
with $c=c(x,y)$, and $c_6>0$ an absolute  constant.
\end{lemma}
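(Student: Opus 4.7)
The plan is to begin with the classical Perron representation \eqn{S802} along the vertical line $\mathrm{Re}(s)=c$, which we first truncate at some large height $T$ (say $T=x$), accepting a standard truncation error that is easily absorbed into the claimed bound. The portion of this integral with $|t|\le 1/\log y$ is precisely the main term on the right side of the claimed identity, so the task reduces to bounding the tail contribution from $1/\log y < |t| \le T$.

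For the subrange $1/\log y \le |t|\le y$ I would invoke Lemma \ref{le91} to bound $|\zeta(s;y)/\zeta(c;y)| \ll \exp(-c_0 u t^2/((1-c)^2+t^2))$, and then integrate against $x^c/|t|$ on each dyadic subinterval. Sub-splitting further at $|t|=1-c$ isolates two contributions: a Gaussian regime $1/\log y \le |t| \le 1-c$ where the exponent behaves like $-c_0 u t^2/(1-c)^2$ and the integral is dominated by the boundary behavior at $|t|=1/\log y$, giving a bound of the form $x^c \zeta(c;y) \exp(-c_0 u/((1-c)^2 (\log y)^2))$ up to a $(\log\log y)$-type factor; and a saturation regime $|t|>1-c$ where the exponent is essentially $-c_0 u$, giving a bound of the form $x^c \zeta(c;y)(\log y) \exp(-c_0 u)$. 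Inserting \eqn{908} to control $1-c$ and absorbing various polynomial-in-$\log y$ losses into the slack parameter $\beta$ converts the first bound into the stated $\exp(-(\log y)^{3/2-\beta})$; choosing $c_6 < c_0$ in the second bound absorbs the logarithmic losses into $\exp(-c_6 u/(\log 2u)^2)$. Finally, on $y<|t|\le T$ the trivial bound $|\zeta(s;y)| \le \zeta(c;y)$ (which follows from the Euler product by maximizing each local factor at the real axis) combined with $|1/s|\le 1/|t|$ yields only a subdominant $\log(T/y)$ contribution.

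The main obstacle is the delicate boundary analysis at $|t|=1/\log y$: this cut-off sits well inside the Gaussian regime (since $(\log y)^{-2}$ is small compared to $(1-c)^2$ under \eqn{908}), so the exponent furnished by Lemma \ref{le91} at the boundary has absolute value only $c_0 u/((1-c)^2(\log y)^2)$, which is modest. One must therefore track carefully the interplay between the scales $u$, $1-c$, and $\log y$ to recover the stated rates of exponential decay, with the parameter $\beta<1$ absorbing the inevitable polynomial-in-$\log y$ losses from the integration against $1/|t|$ and the length of each dyadic range. The coexistence of the two displayed error terms mirrors the transition between the Gaussian and saturation regimes of Lemma \ref{le91}: for $u$ small relative to $(\log y)^{3/2}$ the saturation term $\exp(-c_6 u/(\log 2u)^2)$ is binding, while for $u$ large the boundary Gaussian term $\exp(-(\log y)^{3/2-\beta})$ dominates, and both must be retained to give a uniform bound.
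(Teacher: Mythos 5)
The paper does not actually prove this lemma: it simply quotes Lemma 10 of Hildebrand--Tenenbaum \cite{HT86}, so your sketch has to stand on its own, and it contains a genuine gap at exactly the point where the real difficulty lies. Your treatment of the range $y<|t|\le T$ by the trivial bound $|\zeta(c+it;y)|\le\zeta(c;y)$, $|s|^{-1}\le|t|^{-1}$ gives
$$\int_{y}^{T}\zeta(c;y)\,x^{c}\,\frac{dt}{t}=x^{c}\zeta(c;y)\log\frac{T}{y},$$
which with $T=x$ is of size $x^{c}\zeta(c;y)\log x$ --- not ``subdominant'' but enormously larger than the claimed error $x^{c}\zeta(c;y)\exp(-(\log y)^{3/2-\beta})$, and larger even than $\Psi(x,y)\asymp x^{c}\zeta(c;y)/(c\sqrt{2\pi\phi_2})$. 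No trivial bound can work there: $\zeta(c+it;y)$ is almost periodic in $t$ and returns to size $\zeta(c;y)$ for arbitrarily large $t$, so cancellation must come from the oscillation of $x^{it}$ against $\sum_{p\le y}p^{-it}$. In \cite{HT86} this range is handled by a separate lemma (their Lemma 9), which uses the prime number theorem with a strong (Vinogradov--Korobov type) error term to obtain a genuine saving in $\zeta(c+it;y)$, but only for $|t|\le\exp((\log y)^{3/2-\varepsilon})$; one must therefore truncate Perron at a height of roughly that size, and it is this truncation together with Lemma 9's range restriction that produces the $\exp(-(\log y)^{3/2-\beta})$ term. Your sketch contains no input of this kind, so the first error term cannot be recovered by it.

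A secondary inaccuracy: you attribute the $\exp(-(\log y)^{3/2-\beta})$ term to losses at the inner boundary $|t|=1/\log y$ of the Gaussian regime, but by \eqref{908} one has $(1-c)\log y\asymp\log 2u$ in the relevant ranges, so the bound of Lemma \ref{le91} at that boundary is $\exp(-c_0u/((1-c)^2(\log y)^2))\asymp\exp(-c\,u/(\log 2u)^2)$; i.e.\ the Gaussian-boundary contribution is precisely the \emph{second} error term $\exp(-c_6u/(\log 2u)^2)$, while the saturation regime $|t|\gg 1-c$ gives the even smaller $\exp(-c_0u)$ and is absorbed into it. So your intermediate-range analysis $1/\log y\le|t|\le y$ is essentially sound and yields the second error term, but the two displayed error terms do not arise as the two regimes of Lemma \ref{le91}: the first one comes entirely from the large-$|t|$ range and the Perron truncation, which is the step your proposal is missing.
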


\proof This is Lemma 10 of \cite{HT86}.
\endproof
\smallskip

%
\begin{lemma}~\label{le93}
{\em (Hildebrand and Tenenbaum)}
If $x$ and $y$ are large, and $y\ge\log x$, 
\beql{931}
\frac{1}{2\pi i} \int_{c - \frac{i}{\log y}}^{c+\frac{i}{\log y}}
\zeta(s;y) \frac{x^s}{s} ds =
\frac{x^{c} \zeta(c;y)}{c \sqrt{2\pi \phi_2(c; y)}} 
\left( 1 + O \left( \frac{1}{u}\right) \right),
\eeq
with $c=c(x,y)$.
 Moreover, the same estimate holds for
\beql{932}
\frac{1}{2\pi} \int_{c - \frac{i}{\log y}}^{c+i\frac{i}{\log y}}
|\zeta(s;y) \frac{x^s}{s}| |ds| =
\frac{x^{c} \zeta(c;y)}{c \sqrt{2\pi \phi_2(c; y)}} 
\left( 1 + O \left( \frac{1}{u}\right) \right).
\eeq
\end{lemma}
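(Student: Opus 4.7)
The plan is to carry out the saddle-point analysis underlying Theorem \ref{th91}, now applied to the integral truncated at $|\tau|\le 1/\log y$. Set $F(s):=s\log x+\log\zeta(s;y)$, so that on the contour $s=c+i\tau$ the defining equation (\ref{905}) for $c=c(x,y)$ is precisely the saddle-point condition $F'(c)=\log x+\phi_1(c;y)=0$. Taylor expansion around $c$ yields
\[
F(c+i\tau)-F(c)=-\frac{\phi_2(c;y)}{2}\tau^2+R(\tau),\qquad R(\tau):=\sum_{k\ge 3}\frac{(i\tau)^k}{k!}\phi_k(c;y),
\]
so the integrand becomes $x^c\zeta(c;y)\exp\bigl(-\phi_2(c;y)\tau^2/2+R(\tau)\bigr)/(c+i\tau)$. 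Rescaling by $v=\tau\sqrt{\phi_2(c;y)}$ and using $\phi_2(c;y)\asymp \log x\cdot\log y$ from Theorem \ref{th92}, the truncation $|\tau|\le 1/\log y$ becomes $|v|\le\sqrt{\phi_2(c;y)}/\log y\asymp\sqrt{u}$.

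The elementary bound $|\phi_k(c;y)|\ll_k(\log y)^{k-2}\phi_2(c;y)$, obtained by extracting $\log p\le \log y$ from each term of the Dirichlet series defining $\phi_k$ and using that $c\gg 1/\log y$ by (\ref{908}) to control primes with $p^c$ close to $1$, implies that the $k$th summand of $R(\tau)$ after rescaling is of order $|v|^k u^{-(k-2)/2}/k!$, since $(\log y)^2/\phi_2(c;y)\asymp 1/u$. The decisive point is parity: the cubic term $-iv^3\phi_3/(6\phi_2^{3/2})$ is purely imaginary and odd in $v$, so after expanding $\exp(R(\tau))$ and pairing against the even Gaussian weight $e^{-v^2/2}$ its linear-in-$\phi_3$ contribution vanishes; the surviving leading correction comes from the quartic term and from $R(\tau)^2$, both of which yield relative errors of order $O(1/u)$ after integration against Gaussian moments. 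An analogous treatment handles $1/(c+i\tau)=(1/c)(1-i\tau/c+O((\tau/c)^2))$: since $c\gg 1/\log y$, in the Gaussian range $|\tau|\ll 1/\sqrt{\phi_2(c;y)}$ one has $|\tau/c|\ll 1/\sqrt{u}$; the odd linear correction again integrates to zero, leaving a net factor of $(1/c)(1+O(1/u))$.

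Assembling these pieces, the truncated integral equals
\[
\frac{x^c\zeta(c;y)}{c\sqrt{2\pi\phi_2(c;y)}}\int_{|v|\le A\sqrt{u}} e^{-v^2/2}\,\frac{dv}{\sqrt{2\pi}}\cdot\bigl(1+O(1/u)\bigr)
\]
for some constant $A>0$; extending the range to all of $\RR$ costs only $O(e^{-Bu})$ by Gaussian tail decay, and $(2\pi)^{-1/2}\int_{-\infty}^{\infty}e^{-v^2/2}\,dv=1$ yields the asymptotic (\ref{931}). The second estimate (\ref{932}) follows from essentially the same analysis applied to the manifestly positive integrand $|\zeta(s;y)x^s/s|$: on the contour, $\log|\zeta(c+i\tau;y)|=\log\zeta(c;y)-\phi_2(c;y)\tau^2/2+\Re R(\tau)$, and since the odd-index terms in $R(\tau)$ are purely imaginary they drop out of $\Re R(\tau)$, while the surviving even-index terms satisfy identical bounds; the saddle-point evaluation then produces the same main term. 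The principal obstacle is confirming that the symmetry-based cancellations genuinely deliver $O(1/u)$ rather than the weaker $O(1/\sqrt{u})$ from a pointwise bound on $R(\tau)$: this requires carefully tracking the parity of each Taylor coefficient, matching it against the Gaussian moments, and controlling the contribution from $|v|$ large but still within $[0,A\sqrt{u}]$, where Gaussian tail decay rather than any pointwise estimate is essential.
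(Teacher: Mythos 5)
Your proof necessarily differs from the paper's, because the paper does not prove this lemma at all: it is quoted verbatim as Lemma 11 of Hildebrand--Tenenbaum \cite{HT86}. What you have written is a reconstruction of the saddle-point argument underlying that citation, and in outline it is the right one: the saddle condition \eqref{905} removes the linear term, the bound $|\phi_k(c;y)|\ll_k(\log y)^{k-2}\phi_2(c;y)$ (valid because $c\gg 1/\log y$ by \eqref{908}) gives your rescaled estimate $|v|^k u^{-(k-2)/2}$ after $v=\tau\sqrt{\phi_2(c;y)}$, the parity bookkeeping against Gaussian moments (including the $1/(c+i\tau)$ expansion, where $|\tau/c|\ll |v|/\sqrt{u}$) does produce a relative error $O(1/u)$, and your observation that the odd-index Taylor terms are purely imaginary correctly reduces \eqref{932} to the same analysis.

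There is, however, one genuine gap, and it is exactly the point you defer in your last sentence rather than close. Expanding $\exp(R(\tau))$ and integrating termwise against the Gaussian is only legitimate where $R(\tau)$ is small, i.e. for $|\tau|\ll(\phi_2(c;y)\log y)^{-1/3}$, equivalently $|v|\ll u^{1/6}$. On the rest of the contour, up to $|\tau|=1/\log y$ (that is $|v|\asymp\sqrt{u}$), your own bound only gives $|R(\tau)|\ll|v|^3/\sqrt{u}$, which at the endpoint is of order $u$ --- the same order as the Gaussian exponent $\phi_2(c;y)\tau^2/2$. So ``Gaussian tail decay'' of the weight $e^{-v^2/2}$ does not by itself control the true integrand: a priori $\exp(\mathrm{Re}\,R(\tau))$ could cancel that decay in the outer zone. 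What is needed is a pointwise Gaussian-type upper bound valid on the whole segment $|\tau|\le 1/\log y$, for instance
\[
\log\Big|\frac{\zeta(c+i\tau;y)}{\zeta(c;y)}\Big|=-\sum_{p\le y}\sum_{m\ge1}\frac{p^{-mc}}{m}\bigl(1-\cos(m\tau\log p)\bigr)\le-\sum_{p\le y}p^{-c}\bigl(1-\cos(\tau\log p)\bigr)\le-c_1\tau^2\sum_{p\le y}(\log p)^2p^{-c},
\]
using $1-\cos\theta\ge c_1\theta^2$ for $|\theta|\le1$ (note $|\tau|\log p\le1$ here), together with a comparison of $\sum_{p\le y}(\log p)^2p^{-c}$ with $\phi_2(c;y)$, which again requires $c\gg1/\log y$ to handle primes with $p^c$ near $1$. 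This is the inner-range analogue of Lemma \ref{le91}, and it is what permits the split of the segment into an inner zone where your expansion applies and an outer zone whose contribution is exponentially small relative to the main term. Without this domination step the argument does not close; with it, both \eqref{931} and \eqref{932} follow along the lines you describe, which is also how the cited proof in \cite{HT86} proceeds.
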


\proof This is Lemma 11 of \cite{HT86}, restricted to the range $y\ge \log x$.
\endproof
\smallskip

The agreement in size of the integral \eqn{931} with the absolute value estimate \eqn{932},
is a key feature of the integral being at the saddle point.
We remark that Lemma \ref{le92} and Lemma \ref{le93} are
major  ingredients used by Hildebrand and Tenenbaum  in proving  Theorem \ref{th91}.

%
%
%
%

\section{Bounds for partial $L$-functions on GRH} 
\setcounter{equation}{0}

\noindent It is well-known that the generalized Riemann hypothesis 
implies the generalized Lindel{\" o}f hypothesis:  If $\chi\imod q$ is 
a primitive character and $s$ is a complex number with Re$(s)\ge 1/2$, 
then for any $\epsilon >0$ we have $|L(s, \chi)| 
\ll_{\epsilon} (q |s|)^{\epsilon}$.   Our aim in this section is to 
establish a corresponding conditional estimate for the partial Euler products 
$$L(s,\chi; y):= \prod_{p\le y}\Big( 1- \chi(p)p^{-s}\Big)^{-1}.$$
\smallskip

%
%
\begin{prop}~\label{le43}  Assume  the truth of the GRH.  Let $\chi \imod{q}$ be a 
primitive Dirichlet character.       
For any $\epsilon >0$, and $s$ a complex number with Re$(s)=\sigma\ge 1/2+\epsilon$, we 
have  
\beql{431}
|L(s, \chi; y)| \ll_{\epsilon} (q|s|)^{\epsilon}.
\eeq
For the trivial character we have, with $\sigma ={\rm Re}(s)\ge 1/2+\epsilon$,  
\beql{432}
|\zeta(s; y)| \ll_{\epsilon} \exp\Big(\frac{y^{1- \sigma}}{(1+|t|)\log y} \Big) |s|^{\epsilon}.
\eeq
\end{prop}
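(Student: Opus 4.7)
The plan is to express $\log L(s,\chi;y)$ as a contour integral whose leading contribution is $\log L(s,\chi)$ (and, for $\chi$ trivial, an additional term coming from the pole of $\zeta$ at $s=1$), and then to invoke the Lindel{\"o}f-under-GRH bound $|\log L(s,\chi)| \ll \log\log(q(2+|s|))$ valid for $\mathrm{Re}(s)\ge 1/2+\epsilon$ (Littlewood's theorem, obtained via the Hadamard three-circle theorem applied to $L(u,\chi)$). A first reduction: the contribution of prime powers $p^k$ with $k\ge 2$ to $\log L(s,\chi;y)$ is bounded absolutely by $\sum_p 1/p^{2\sigma}=O(1)$ for $\sigma\ge 1/2+\epsilon$, so it suffices to estimate $\sum_{p\le y}\chi(p)/p^s$.

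For non-principal $\chi$, apply a Perron-type formula (smoothed or truncated) to the related sum $\sum_{p^k\le y}\chi(p)^k/(kp^{ks})$, giving
\[
\frac{1}{2\pi i}\int_{(c)}\log L(s+w,\chi)\,\frac{y^w}{w}\,dw + E,
\]
with $c>1-\sigma$ and $E$ a controlled error. Under GRH, $\log L(u,\chi)$ is holomorphic on $\mathrm{Re}(u)>1/2$, so one may shift the contour leftward to $\mathrm{Re}(w)=1/2+\epsilon/2-\sigma$, crossing only the simple pole of $1/w$ at $w=0$ with residue $\log L(s,\chi)$. The shifted integral is bounded by $y^{-\epsilon/2}(\log T)\log\log(q(|s|+T))$ via the Lindel{\"o}f bound, which is admissible for a suitable truncation height $T$. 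Combining, $|\log L(s,\chi;y)|\ll\log\log(q(2+|s|))+O(1)$, and exponentiation gives $|L(s,\chi;y)|\ll_\epsilon (q|s|)^\epsilon$.

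For $\chi$ trivial, $\log\zeta(u)$ has a branch singularity at $u=1$; to sidestep this, work instead with $-\zeta'(s+w)/\zeta(s+w)$ in the analogous Perron identity for $\sum_{p^k\le y}\log p/p^{ks}$. Shifting the contour leftward now crosses two simple poles: at $w=0$ with residue $-\zeta'(s)/\zeta(s)=O((\log|s|)^2)$ under RH, and at $w=1-s$ with residue $y^{1-s}/(1-s)$ arising from the pole of $\zeta$. A final partial summation converts $\sum_{p\le y}\log p/p^s$ into $\sum_{p\le y}1/p^s$, introducing the characteristic extra factor $1/\log y$ on the dominant term to produce $y^{1-s}/((1-s)\log y)$, of magnitude at most $y^{1-\sigma}/((1+|t|)\log y)$ (using $|1-s|\ge\max(1-\sigma,|t|)$). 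Exponentiating yields the stated bound. \textbf{The main obstacle lies here}: truncating the Perron integral, handling the pole at $w=1-s$, and, crucially, extracting the correct power of $\log y$ through partial summation --- without this factor the final bound would be off by a logarithm. The GRH bounds on $\log L(s,\chi)$ and $\zeta'/\zeta$ that serve as the other essential ingredients are standard.
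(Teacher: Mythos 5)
Your overall strategy --- Perron the truncated Dirichlet series for $\log L(s,\chi;y)$ (resp.\ for the $\Lambda$-weighted sum when $\chi$ is trivial), shift the contour into $\mathrm{Re}(u)>\tfrac12$ where GRH gives analyticity, and control the shifted integral by conditional bounds --- is a viable close relative of the paper's argument, which performs the contour work one level down: it proves an explicit-formula bound for $\sum_{n\le u}\Lambda(n)\chi(n)n^{-it}$ (Lemmas \ref{le41} and \ref{le42}) by applying Perron to $-L'/L$ and shifting past the critical-line zeros, and then passes to $\log L(s,\chi;y)$ by partial summation. The genuine problem with your write-up is the analytic input you invoke: under GRH it is \emph{not} true that $|\log L(s,\chi)|\ll\log\log(q(2+|s|))$ for $\mathrm{Re}(s)\ge \tfrac12+\epsilon$. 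Littlewood's $\log\log$ bound holds only at (or very near) the $1$-line; for fixed $\sigma\in(\tfrac12,1)$ the correct conditional bound is of the shape $\log L(\sigma+it,\chi)\ll_{\epsilon}(\log (q(2+|t|)))^{2-2\sigma}$, and indeed $\Omega$-results show $\log|\zeta(\sigma+it)|$ is infinitely often as large as a positive power of $\log t$, so your claimed bound fails already for $\zeta$ at $\sigma=\tfrac12+\epsilon$. Consequently your intermediate conclusion $|\log L(s,\chi;y)|\ll\log\log(q(2+|s|))+O(1)$ is stronger than what is true. The defect is repairable --- since $2-2\sigma\le 1-2\epsilon<1$, the correct power-of-$\log$ bound still exponentiates to $(q|s|)^{\epsilon}$ and still makes your shifted integral $\ll y^{-\epsilon/2}$ times logarithmic factors --- and this power-of-$\log$ estimate is exactly what the paper's partial-summation argument produces in the last display of its proof; but as written your proof of \eqref{431} rests on a false lemma.

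Two further points on the principal-character case. First, the residue at $w=0$ is $-\zeta'(s)/\zeta(s)$, which is not $O((\log|s|)^2)$ when $s$ is near $1$: there it is $\sim 1/(s-1)$, the poles at $w=0$ and $w=1-s$ nearly coincide, and their residues must be combined (their sum is regular, of size $O(\log y)$) for the bookkeeping to survive; relatedly, your inequality $|1-s|\ge\max(1-\sigma,|t|)$ does not yield the $(1+|t|)$ denominator of \eqref{432} when $|t|\le 1$ and $\sigma$ is close to $1$, which is precisely the delicate region (the paper only ever applies \eqref{432} with $|t|$ of size $y$, where this is harmless). Second, the partial summation you perform at the end to insert the $1/\log$ weights requires your Perron/contour estimate for $\sum_{n\le u}\Lambda(n)n^{-s}$ uniformly in the cutoff $u\le y$, not just at $u=y$; once you make that uniformity explicit you have essentially re-derived the paper's Lemma \ref{le42}, so this half of your argument collapses onto the paper's proof rather than offering a genuinely different route.
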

 
 We shall prove Proposition \ref{le43} by developing conditional 
 estimates for $\sum_{n\le u} \Lambda(n)\chi(n) n^{-it}$.  These estimates 
 follow from standard ``explicit formula" arguments connecting such prime sums 
 with zeros of the corresponding $L$-function, and we shall be brief 
 in sketching their proofs.  \smallskip

%
%
\begin{lemma}~\label{le41}  Let $\chi \imod q$ be a primitive Dirichlet character, 
and let $t$ be a real number.   Let $\rho=\beta+i\gamma$ denote a typical 
zero of the Dirichlet $L$-function $L(s,\chi)$.   Let $\delta(\chi)=1$ if $q=1$ and 
$\chi$ is the principal character, and $\delta(\chi)=0$ otherwise.  
Then for $u\ge 2$ and any parameter $T \ge 2$ we have 
\begin{eqnarray*}
\sum_{n\le u} \Lambda(n) \chi(n) n^{-it} &=& \delta(\chi)\frac{u^{1-it}}{1-it} - 
 \sum_{ {{0<\beta <1}\atop{|\gamma-t | \le T}} } 
\frac{u^{\rho- it}}{\rho - it} \\
&& + O\Big( \Big(1+\frac{u}{T}\Big) (\log (qu(T+|t|)))^2 + \sum_{|\rho|\le 1} \frac{1}{|\rho|} \Big).\\
\end{eqnarray*}
\end{lemma}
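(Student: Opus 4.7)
The lemma is the classical ``twisted'' explicit formula for $\psi(u, \chi, t) := \sum_{n \le u} \Lambda(n)\chi(n) n^{-it}$, and the plan is a standard application of Perron's formula combined with a contour shift, built on the identity
$$-\frac{L'}{L}(s+it, \chi) = \sum_{n=1}^{\infty} \Lambda(n) \chi(n) n^{-it} n^{-s}, \qquad {\rm Re}(s) > 1.$$
First I would apply the truncated Perron formula at abscissa $c = 1 + 1/\log u$ and height $T$, obtaining
$$\psi(u, \chi, t) = \frac{1}{2\pi i}\int_{c-iT}^{c+iT} \Big(-\frac{L'}{L}(s+it, \chi)\Big) \frac{u^s}{s}\,ds + O\Big(\frac{u(\log u)^2}{T} + \log u\Big),$$
via the standard Perron truncation bound.

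Next I would shift the contour leftwards to ${\rm Re}(s) = -\tfrac{1}{2}$ (first perturbing $T$ within a unit interval so that no zero of $L(s, \chi)$ has ordinate exactly $\pm T + t$), picking up inside the rectangle the following residues: the pole at $s = 1 - it$ when $\chi$ is principal, which contributes $\delta(\chi)\, u^{1-it}/(1-it)$; the pole at $s = \rho - it$ for each non-trivial zero $\rho=\beta+i\gamma$ of $L(s, \chi)$ with $|\gamma - t| \le T$, which contributes $-u^{\rho - it}/(\rho - it)$; the pole at $s = 0$ from the factor $1/s$, whose residue is $-L'/L(it, \chi)$; and residues at trivial zeros and at poles from the Euler completion factor, whose total contribution is $O(1)$. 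The horizontal segments at ${\rm Im}(s) = \pm T$ and the shifted vertical integral at ${\rm Re}(s) = -\tfrac{1}{2}$ are controlled using the standard bound $|L'/L(\sigma + iv, \chi)| \ll (\log(q(|v|+2)))^2$ valid off the zero ordinates, yielding a contribution of size $O\bigl((1 + u/T)(\log(qu(T + |t|)))^2\bigr)$.

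Finally I would estimate $-L'/L(it, \chi)$ using the Hadamard partial fraction expansion
$$-\frac{L'}{L}(it, \chi) = \sum_\rho \Big(\frac{1}{it - \rho} + \frac{1}{\rho}\Big) + O\bigl(\log(q(|t|+2))\bigr) + \frac{\delta(\chi)}{it - 1},$$
where the sum over non-trivial zeros is conditionally convergent. By the standard density bound $\#\{\rho : |\gamma - t| \le V\} \ll V\log(q(|t|+2)) + \log(q(|t|+2))$, the portion of this sum with $|\rho| > 1$ is absorbed into the $(\log(qu(T+|t|)))^2$ error, while the ``small zeros'' with $|\rho| \le 1$ contribute precisely the term $\sum_{|\rho| \le 1} 1/|\rho|$ in the stated error. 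The principal technical hurdle is bookkeeping: producing bounds for $L'/L$ on the contour that are genuinely uniform in $q$, $t$, and $T$, and cleanly isolating the unavoidable small-zero contribution (which cannot be absorbed into a logarithmic error because it can be large if $\chi$ has low-lying zeros). Everything beyond this is routine explicit-formula machinery.
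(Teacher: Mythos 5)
Your proposal is correct and follows essentially the same route as the paper: truncated Perron at $c=1+1/\log u$, a height adjustment to avoid zero ordinates, a shift to ${\rm Re}(s)=-\tfrac12$ with the standard $(\log)^2$ bounds on the horizontal and shifted segments, and residue bookkeeping that isolates the possible Siegel-zero contribution as $\sum_{|\rho|\le 1}1/|\rho|$. The only cosmetic difference is that you expand $-L'/L(it,\chi)$ by Hadamard partial fractions where the paper simply cites the Davenport Chapter 19 treatment of the poles at $w=0$ and $w=-it$ (whose individually large parts cancel), but this is the same bookkeeping.
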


\proof This unconditional result may be derived by following 
the method given  in Chapters 17 and 19 of Davenport \cite{DM80}.   We start with Perron's formula 
\beql{407b}
\frac{1}{2\pi i} \int_{1+1/\log u-i\infty}^{1+1/\log u+i\infty} -\frac{L^{\prime}}{L}(w+it, \chi) \frac{u^w}{w} 
dw = \sum_{n\le u} \Lambda(n) \chi(n)n^{-it} +O(\log u),
\eeq
Now for each $T\ge 2$ we may find $T_1$ and $T_2$ with $|T_1+T|\le 1$ 
and $|T_2-T|\le 1$ such that $|L^{\prime}/L(c+iT_j+it) | \ll (\log (q(T+|t|)))^2$ for 
all $-\frac 12 \le c\le 1+1/\log x$.  We truncate the integral in 
(\ref{407b}) to the line segment $[1+1/\log u +iT_1, 1+1/\log u + iT_2]$ 
and incur an error of $O(u(\log u)^2/T)$.  We now shift the line of
integration to  Re$(w)= -\frac{1}{2}$, using a rectangular contour.
In view of our choice for the heights $T_1$ and $T_2$, the horizontal sides 
contribute $O( u (\log (q(T+|t|))^2/T)$.  The vertical side of the box with Re$(w)=-\frac{1}{2}$ 
contributes $O( (\log qu(T+|t|))^2/\sqrt{u})$, upon using the functional equation 
to estimate $L^{\prime}/L$ on this line.  The net contribution of the 
error terms discussed so far is 
$$ 
\ll \Big(\frac{u}{T}+1\Big) (\log (qu(T+|t|)))^2. 
$$

It remains lastly to discuss the 
residues of the poles encountered while shifting our contour.  If $q=1$ and 
$\chi$ is the principal character, there is a pole at $w=1-it$ which leaves 
the residue $u^{1-it}/(1-it)$.   If $\rho$ is a zero of $L(s,\chi)$ with 
$0<\beta<1$ and $T_1\le \gamma -t \le T_2$ then there is a 
pole at $w=\rho-it$ in our contour shift.  The contribution of these poles is 
$$ 
-\sum_{{0<\beta <1}\atop {T_1< \gamma-t \le T_2}} \frac{u^{\rho-it}}{\rho-it} 
= - \sum_{{0< \beta<1} \atop {|\gamma-t|\le T}} \frac{u^{\rho-it}}{\rho-it} + 
O\Big( \frac{u}{T} \log (q(T+|t|))\Big),
$$ 
since the conditions $T_1 < \gamma -t < T_2$ and $|\gamma-t| \le T$ are 
different for at most $\ll \log (q(T+|t|))$ zeros.  
 Finally there is a pole at $w=0$ and, if $\chi(-1)=1$, $q>1$ and $-t \in [T_1,T_2]$ a 
 pole at $w=-it$.  The residues at these poles may be treated as in 
 Chapter 19 of Davenport \cite{DM80} and they contribute an amount $\ll \log (qu(T+|t|)) 
 + \sum_{|\rho|\le 1} 1/|\rho|$.  This sum over $|\rho|\le 1$ is to account 
 for the case where there is a Siegel zero very near $1$ (and hence a corresponding zero 
 very near $0$).  

Assembling these observations together, we obtain the Lemma.
\endproof
\smallskip

%
%
\begin{lemma}~\label{le42}  Assume the truth of  the GRH. 
If $\chi \imod q$ is a primitive Dirichlet character with $q>1$, then for $u \ge 1$ and all real $t$ 
we have 
\beql{421}
\sum_{n\le u} \Lambda(n) \chi(n)n^{-it} \ll \sqrt{u} (\log u) \log (qu(|t|+2))   .
\eeq
In the case of the principal character (and so $q=1$), we have for $u \ge 1$ and all real $t$,
\beql{422}
\sum_{n\le u} \Lambda(n) n^{-it} = \frac{ u^{1-it}}{1-it} + 
O( \sqrt{u} (\log u )\log( u(|t|+2)) ).
\eeq
\end{lemma}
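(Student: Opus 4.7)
The plan is to apply Lemma~\ref{le41} under GRH with a judicious choice of the cutoff parameter $T$. The critical input from GRH is that every non-trivial zero $\rho = \beta + i\gamma$ of $L(s,\chi)$ satisfies $\beta = 1/2$, so that $|u^{\rho-it}| = \sqrt{u}$; this replaces the trivial bound $u^{\beta} \le u$ in the zero sum from Lemma~\ref{le41} and is the source of the square-root savings in \eqn{421}.

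First I would estimate the zero sum
\[
\sum_{\substack{0<\beta<1 \\ |\gamma-t|\le T}} \frac{u^{\rho-it}}{\rho-it}.
\]
Under GRH one has $|\rho-it| = |\tfrac 12 + i(\gamma-t)| \ge \max(\tfrac 12,|\gamma-t|)$, so each summand has modulus at most $\sqrt{u}/\max(\tfrac12,|\gamma-t|)$. Combined with the standard Riemann--von Mangoldt zero-counting estimate, which asserts that the number of zeros of $L(s,\chi)$ with imaginary part in an interval of length $V$ near height $t$ is $\ll (1+V)\log(q(|t|+V+2))$, a dyadic decomposition over the scales of $|\gamma-t|$ yields
\[
\sum_{|\gamma-t|\le T} \frac{1}{|\rho-it|} \ll (\log T)\log(q(|t|+T+2)),
\]
so the zero sum is bounded by $\ll \sqrt{u}(\log T)\log(q(|t|+T+2))$. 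The auxiliary sum $\sum_{|\rho|\le 1}1/|\rho|$ appearing in Lemma~\ref{le41} is likewise $\ll \log(q+2)$: under GRH every such zero has $|\rho|\ge 1/2$, and there are $O(\log(q+2))$ of them.

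Next I would choose $T=\sqrt{u}$ to balance the two error contributions. With this choice, the error $(1+u/T)(\log(qu(T+|t|)))^2$ in Lemma~\ref{le41} becomes $\ll \sqrt{u}(\log(qu(|t|+2)))^2$, while the zero sum contributes $\ll \sqrt{u}(\log u)\log(qu(|t|+2))$. Combining these, Lemma~\ref{le41} gives
\[
\sum_{n\le u} \Lambda(n)\chi(n)n^{-it} = \delta(\chi)\frac{u^{1-it}}{1-it} + O\bigl(\sqrt{u}(\log u)\log(qu(|t|+2))\bigr),
\]
which is \eqn{421} when $q>1$ (so $\delta(\chi)=0$) and \eqn{422} when $q=1$.

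The main obstacle is executing the dyadic bound for the zero sum cleanly. The ingredients are standard, but care is needed near $\gamma = t$, where $|\rho-it|$ is bounded below only by $\tfrac12$ rather than $|\gamma-t|$, and in handling very small values of $u$ (which can be disposed of trivially by adjusting the implicit constant, since the stated bound is $\gg 1$ once $u\ge 2$).
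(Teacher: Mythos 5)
Your overall strategy is the same as the paper's: apply Lemma~\ref{le41}, use GRH to replace $u^{\beta}$ by $\sqrt{u}$ in the zero sum, and bound $\sum_{|\gamma-t|\le T} 1/|\rho-it|$ by dyadic blocks together with the zero-counting estimate; that part of your argument is sound, as is the treatment of $\sum_{|\rho|\le 1}1/|\rho|$. The genuine gap is in the choice $T=\sqrt{u}$ and the final ``combining'' step. With $T=\sqrt{u}$ the prefactor $1+u/T$ in Lemma~\ref{le41} is of size $\sqrt{u}$, so you are left with a secondary error term of size $\sqrt{u}\,(\log(qu(|t|+2)))^2$, and this is \emph{not} $\ll \sqrt{u}\,(\log u)\log(qu(|t|+2))$ unless $\log(qu(|t|+2))\ll \log u$, i.e.\ unless $q(|t|+2)$ is bounded by a fixed power of $u$. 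The lemma is asserted for all $q$ and all real $t$, and there is a regime --- for instance $t=0$ and $\log q \asymp (\log u)^2$, so that $\log u \ll \log(qu(|t|+2)) \ll \sqrt{u}/\log u$ --- where your proven bound exceeds the claimed one by the unbounded factor $\log(qu(|t|+2))/\log u$, and where the trivial bound $\sum_{n\le u}\Lambda(n)\ll u$ does not rescue the statement either, since there $u$ is larger than $\sqrt{u}(\log u)\log(qu(|t|+2))$. Your closing remark about small $u$ does not touch this case: the problem is $q$ or $|t|$ being large relative to $u$, not $u$ being small.

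The fix is exactly what the paper does. Take $T$ at least of size $u$ (the paper takes $T=u^2$), so that $1+u/T\ll 1$ and the secondary error is only $(\log(qu(T+|t|)))^2 \ll (\log(qu(|t|+2)))^2$ with no $\sqrt{u}$ attached, while the zero sum is still $\ll \sqrt{u}(\log u)\log(qu(|t|+2))$ because $\log T\ll \log u$ and $\log(q(|t|+T+2))\ll \log(qu(|t|+2))$. Then add the case distinction that you omitted: if $\log(qu(2+|t|))\le \sqrt{u}$ the squared logarithm is absorbed into $\sqrt{u}\,(\log u)\log(qu(2+|t|))$, and if $\log(qu(2+|t|))\ge \sqrt{u}$ the claimed estimate is weaker than the trivial bound $\ll u$, so it holds anyway. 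With those two adjustments your argument becomes the paper's proof.
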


\proof We apply Lemma~\ref{le41} choosing $T=u^2$.  We shall use GRH to bound 
the sums over zeros appearing there, and recall that there are $\ll (\log (q(2+|z|))$ 
zeros of $L(s, \chi)$  in $|\gamma-z|\le 1$.  Thus we obtain that 
\begin{eqnarray*}
\sum_{n\le u} \Lambda(n) \chi(n) n^{-it} 
&=& \delta(\chi) \frac{u^{1-it}}{1-it}  + 
O\Big( \sum_{|\gamma -t|\le T} \frac{\sqrt{u}}{1+|t-\gamma|} + (\log (qu (2+|t|)))^2 \Big)\\
&=&  \delta(\chi) \frac{u^{1-it}}{1-it}  + O\Big( \sqrt{u} (\log (qu(2+|t|))) \log u+ (\log (qu(2+|t|)))^2\Big).\\
\end{eqnarray*}   
If $\log (qu(2+|t|) \le \sqrt{u}$ then the second error term above 
may be absorbed into the first, and our Lemma follows.  If $\log (qu(2+|t|) \ge \sqrt{u}$ 
then the stated estimates are weaker than
the trivial bound $\sum_{n\le u} \Lambda(n)\chi(n) n^{-it} 
\ll u$, and so our Lemma holds in this case also.  

  \endproof
  
%
%
\proof[Proof of Proposition \ref{le43}]  From the definition of $L(s,\chi; y)$ we have 
that 
$$ 
|L(s,\chi;y)| = \exp\Big({\rm Re} (\log L(s, \chi;y))\Big) 
\ll  \exp\Big({\rm Re} \sum_{n \le y} \frac{\Lambda(n) \chi(n)n^{-it}}{n^{\sigma} \log n} \Big).
$$
If $(\log (q(2+|t|)))^2 \ge y$, then using the prime number theorem we have that 
\beql{518a}
\sum_{n\le y} \frac{\Lambda(n) \chi(n)n^{-it}}{n^{\sigma} \log n} \ll 
\sum_{n\le (\log (q(2+|t|)))^2} \frac{\Lambda(n)}{\sqrt{n} \log n} \ll \frac{\log (q(2+|t|))}{\log \log (q(2+|t|))}, 
\eeq  
and the bounds of the Lemma hold.  

Suppose now that $y\ge (\log (q(2+|t|)))^2$. We use the estimate \eqn{518a} above 
for the terms $n\le (\log (q(2+|t|)))^2$, and use partial summation and Lemma \ref{le42} 
for larger values of $n$.  Thus we find that 
$$ 
\sum_{n\le y} \frac{\Lambda(n) \chi(n)n^{-it}}{n^{\sigma} \log n} = 
O\Big(\frac{\log (q(2+|t|))}{\log \log (q(2+|t|))} \Big) + \int_{(\log q(2+|t|))^2}^{y} 
\frac{1}{z^{\sigma}\log z} d\Big(\sum_{n\le z} \Lambda(n)n^{-it} \chi(n)\Big).
$$ 
Suppose first that $q>1$.  Integrating by parts, and using (\ref{421}) we 
see that the integral above is 
\begin{eqnarray*}
&\ll& (\log (q(2+|t|)))^{2-2\sigma} + \int_{(\log (q(2+|t|)))^2}^{y} \sqrt{z}(\log (qz(2+|t|)))
\Big(\frac{\sigma}{z^{\sigma+1}} +\frac{1}{z^{\sigma+1}\log z}\Big) dz \\
&\ll& \frac{\sigma}{\sigma-\frac 12} (\log (q(2+|t|)))^{2-2\sigma}.\\
\end{eqnarray*}
If $\sigma \ge \frac 12+\epsilon$ then the above estimates readily imply (\ref{431}).  

The case when $q=1$ is similar, but we appeal to (\ref{422}) in place of 
(\ref{421}) above.  This leads to including an extra
main term in our sum above of size $y^{1-\sigma+it}/((1-it)\log y)$, 
and thus we obtain  (\ref{432}).  
   
\endproof

%
%
\section{ The weighted exponential sum $E(x,y;\alpha)$}\label{sec5}
\setcounter{equation}{0}

\noindent Our aim in this section is to understand the weighted sum $E(x, y; \alpha) 
=\sum_{n\in {\mathcal S}(y)} e(n\alpha) \Phi(n/x)$.  We shall use the 
decomposition into multiplicative characters developed in \S 3 
together with the GRH bounds for partial $L$-functions developed in \S 5. 
Here $\Phi$ is treated as fixed, and all constants in $O$-symbols 
depend on it. \smallskip

%
%
\begin{prop} \label{pr42}  Assume the truth of the GRH.  Let $\alpha$ be a 
real number in $[0,1]$ and write $\alpha =a/q + \gamma$ with $(a,q)=1$, 
$q\le \sqrt{x}$, and $|\gamma| \le 1/(q\sqrt{x})$.   Then 
\beql{401a}
E(x,y;\alpha) = M(x,y;q,\gamma) + O(x^{\frac{3}{4}+\epsilon}), 
\eeq
where the ``local main term" $M(x,y;q, \gamma)$ is defined by 
\beql{401b}
M(x,y; q, \gamma) = \sum_{n \in \sS(y)} 
\frac{\mu(\frac{q}{(q,n)})}{\phi(\frac{q}{(q,n)})}e(n \gamma) \Phi\Big(\frac{n}{x}\Big).
\eeq
\end{prop}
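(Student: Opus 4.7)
The plan is to start from the decomposition identity \eqn{491a}, which expresses $e(n\alpha)\Phi(n/x)$ (for $\alpha = a/q + \gamma$ and $d=(n,q)$) as the product of a character sum $\frac{1}{\phi(q/d)}\sum_{\chi \imod{q/d}} \tau(\bar\chi)\chi(na/d)$ with the Mellin integral $\frac{1}{2\pi i}\int \check\Phi(s,\gamma x)(x/n)^s\,ds$. After summing over $n\in \sS(y)$, I would split the $\chi$-sum into the principal character $\chi_0 \imod{q/d}$ plus a non-principal remainder. For $\chi_0$ one has $\tau(\bar{\chi_0})=\mu(q/d)$ by Lemma~\ref{le31a}, $\chi_0(na/d)=1$ (since $(a,q/d)=1$ and $(n/d,q/d)=1$ by definition of $d$), and the inner Mellin integral equals $e(n\gamma)\Phi(n/x)$ by Lemma~\ref{le32}(1). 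Hence the principal-character part reproduces exactly $M(x,y;q,\gamma)$, reducing the problem to bounding the non-principal contribution by $O(x^{3/4+\epsilon})$.

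For the non-principal contribution, write $n=dm$ with $d\mid q$, $d\in \sS(y)$ (forced since $n$ is $y$-smooth), $m\in \sS(y)$, $(m,q/d)=1$. Interchanging the $m$-sum with the Mellin integral (legitimate on an initial line ${\rm Re}(s)=c_1>1$ by absolute convergence together with the rapid decay of $\check\Phi$ from Lemma~\ref{le33}), the inner $m$-sum becomes the partial $L$-function $L(s,\chi;y)$. The next step is to shift the contour to ${\rm Re}(s)=\tfrac12+\epsilon$. Since $L(s,\chi;y)$ is a finite Euler product whose only possible singularities require $p^s=\chi(p)$ and hence lie on ${\rm Re}(s)=0$, no residue contributions appear, and the horizontal tails again vanish by Lemma~\ref{le33}.

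On the shifted line I would assemble three inputs: the GRH bound $|L(\tfrac12+\epsilon+it,\chi;y)|\ll (q(1+|t|))^\epsilon$ from Proposition~\ref{le43} (extended from primitive to induced characters by a trivial correction factor of size $q^{o(1)}$); the $L^1$ estimate
\[
\int_{-\infty}^\infty |\check\Phi(\tfrac12+\epsilon+it,\gamma x)|(1+|t|)^\epsilon\,dt \ll (1+|\gamma x|)^{1/2+2\epsilon}
\]
from Lemma~\ref{le34}; and the Gauss sum bound $|\tau(\bar\chi)|\le \sqrt{q/d}$ from Lemma~\ref{le31a}. Summing over $\chi\ne \chi_0$ via Cauchy--Schwarz against the orthogonality \eqn{312a} (which gives $\sum_\chi |\tau(\bar\chi)|\le \phi(q/d)^{3/2}$), and then over $d \mid q$ using $\sum_{d\mid q}\sqrt{q/d}\,d^{-1/2}=\sqrt{q}\,\sigma_{-1}(q)\ll q^{1/2+\epsilon}$ together with $|\gamma x|\le \sqrt{x}/q$, the total non-principal contribution is bounded by
\[
x^{1/2+\epsilon}\,q^{1/2+\epsilon}\bigl(1+\sqrt{x}/q\bigr)^{1/2+\epsilon} \;\ll\; x^{3/4+\epsilon},
\]
uniformly in $q\le \sqrt{x}$, since $q^{1/2}(1+\sqrt{x}/q)^{1/2}\ll q^{1/2}+x^{1/4}\le 2x^{1/4}$ throughout the admissible range.

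The main obstacle is the careful accounting of three intertwined square-root losses — the $\sqrt{q/d}$ from classical Gauss sums, the $\sqrt{1+|\gamma x|}$ from the Gauss-sum analogue $\check\Phi(s,\gamma x)$ (the $L^2$-to-$L^1$ deterioration via Cauchy--Schwarz in Lemma~\ref{le34}), and the conditional $\sqrt{x}$ loss in the Lindel\"of-type estimate for $L(s,\chi;y)$ — and verifying that these combine into the uniform $x^{3/4+\epsilon}$ bound. This reflects the Hardy--Littlewood minor-arc principle that the pair $(q,\gamma)$ is essentially self-dual under the Farey dissection with parameter $Q=\sqrt{x}$: pushing $q$ up forces $|\gamma|$ down, and the product of losses remains balanced.
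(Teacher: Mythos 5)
Your proposal is correct and follows essentially the same route as the paper: the multiplicative-character decomposition of $e(n\alpha)\Phi(n/x)$, with the principal character producing $M(x,y;q,\gamma)$, and the non-principal contribution bounded by combining the Mellin representation on ${\rm Re}(s)=\tfrac12+\epsilon$, the GRH Lindel\"of bound for $L(s,\chi;y)$ (extended to imprimitive characters), the $L^1$ bound for $\check\Phi$ from Lemma~\ref{le34}, and the Gauss-sum bound, using $q\le\sqrt{x}$ and $q|\gamma|x\le\sqrt{x}$. The only cosmetic differences (starting the contour at ${\rm Re}(s)>1$ and shifting, and using Cauchy--Schwarz with \eqn{312a} instead of the pointwise bound $|\tau(\bar\chi)|\le\sqrt{q/d}$ times the character count) change nothing of substance.
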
 

\proof We begin by remarking that Dirichlet's theorem on diophantine approximation 
guarantees the existence of decompositions $\alpha=a/q+\gamma$ with $(a,q)=1$, 
$q\le \sqrt{x}$ and $|\gamma|\le 1/(q\sqrt{x})$.  Writing $n\in {\mathcal S}(y)$ as 
$dm$ where $d= (n,q)$
we see that 
\[ 
E(x,y;\alpha) = \sum_{{d|q}\atop{d \in \sS(y)}} 
\sum_{{m \in \sS(y)}\atop{(m, \frac{q}{d})=1}}
e\Big(\frac{am}{q/d}\Big) e(md\gamma) \Phi\Big(\frac{md}{x}\Big) .
\]
Using now Lemma~\ref{le31} we find that 
\beql{324}
E(x, y;  \alpha)=
\sum_{{d|q}\atop{d \in \sS(y)}} \frac{1}{\phi(q/d)} 
\sum_{\chi\!\! \!\!\imod{q/d}} \chi(a) \tau(\bar{\chi}) 
\sum_{m \in \sS(y)\atop{(m, \frac{q}{d})=1}} e(md \gamma) \chi(m)
\Phi\Big(\frac{md}{x}\Big).
\eeq 

Consider first the contribution of the principal 
character $\imod {q/d}$.  The Gauss sum for the principal character $\imod {q/d}$ 
equals $\mu(q/d)$, and hence the contribution of the principal characters 
to (\ref{324}) is 
\[
\sum_{{d|q}\atop{d \in \sS(y)}} \frac{\mu(q/d)}{\phi(q/d)}
 \sum_{m \in \sS(y)\atop{(m, \frac{q}{d})=1}} e(md \gamma) 
\Phi\Big(\frac{md}{x}\Big) = \sum_{n\in {\mathcal S}(y)} \frac{\mu(q/(q,n))}{\phi(q/(q,n))} 
e(n\gamma) \Phi\Big(\frac nx \Big) = M(x,y;q,\gamma).  
\]
This is the main term isolated in our Proposition, and we must show that 
the contribution of the non-principal characters to (\ref{324}) is $O(x^{\frac 34+\epsilon})$.   

We shall establish using Proposition \ref{le43} that if $\chi$ is not the principal character $\imod{q/d}$ then 
\beql{411} 
\frac {\sqrt q}{\sqrt d} \Big|\sum_{m \in \sS(y)} e(md \gamma) \chi(m) 
\Phi\Big(\frac{md}{x}\Big) \Big| \ll x^{\frac 34+\epsilon}.
\eeq 
Assuming this for the present, since $|\tau(\overline{\chi})| \le \sqrt{q/d}$ 
for all characters $\chi\imod{q/d}$ by Lemma \ref{le31a}, we see that the contribution of the non-principal 
characters to (\ref{324}) is bounded by 
\[
\ll \sum_{{d|q}\atop{d \in \sS(y)}} \frac{1}{\phi(q/d)} \sum_{{\chi \imod {q/d}} 
\atop {\chi \neq \chi_0}}   x^{\frac 34+\epsilon} \ll x^{\frac 34+\epsilon} d(q)
\ll x^{\frac 34+\epsilon}.  
\]

Thus to finish the proof of our Proposition, we need only establish (\ref{411}).  
 Using Lemma \ref{le32} we see that for any $c>0$ 
 \begin{eqnarray*} 
 \sum_{m \in {\mathcal S}(y)} e(md\gamma) \chi(m) \Phi\Big( 
 \frac{md}{x}\Big) &=& \sum_{m \in {\mathcal S}(y)} \frac{1}{2\pi i} \int_{c-i\infty}^{c+i\infty}  
 {\check \Phi}(s,\gamma x) \Big(\frac{x}{dm}\Big)^s ds\\
 &=& 
 \frac{1}{2\pi i} \int_{c-i\infty}^{c+i\infty} L(s,\chi; y) {\check \Phi}(s,\gamma x) \Big(\frac{x}{d}\Big)^s ds\\
 \end{eqnarray*} 
 where the interchange of the sum and integral is justified by the absolute convergence 
 of $L(s,\chi; y)$ for any Re$(s)>0$.  We now take $c=1/2+\epsilon$ and 
 invoke the GRH bound from Proposition \ref{le43}  which gives $L(s,\chi; y) \ll (q|s|)^{\epsilon}$.  
 Note that Proposition \ref{le43} applies to primitive characters $\chi$, but we may extend it 
 easily to imprimitive characters as follows.  Suppose $\chi$ is induced from a primitive character 
 ${\tilde \chi}\imod {\tilde{q}}$ then we have 
 $|L(s,\chi;y)| \le |L(s,{\tilde \chi};y)| \prod_{p|(q/{\tilde q})} (1+1/\sqrt{p}) \ll (q|s|)^{\epsilon}$ 
 upon using the bound of Proposition \ref{le43} for $L(s,{\tilde \chi};y)$.   
 It follows that 
 \[
\Big |  \sum_{m \in {\mathcal S}(y)} e(md\gamma) \chi(m) \Phi\Big(  \frac{md}{x}\Big)\Big |
  \ll \Big(\frac{x}{d}\Big)^{\frac 12+\epsilon} q^{\epsilon} 
 \int_{-\infty}^{\infty} |{\check \Phi}(\tfrac 12+\epsilon +it, \gamma x)| (1+|t|)^{\epsilon} dt.  
 \]
Using Lemma 3.5, we conclude that 
\[
\frac{\sqrt{q}}{\sqrt d} \Big| \sum_{m\in {\mathcal S}(y)} e(md\gamma)\chi(m) \Phi\Big(\frac{md}{x}\Big) \Big|
 \ll (\frac{1}{d})^{1+\epsilon}x^{\frac 12+\epsilon} q^{\frac 12+ \epsilon} (1+|\gamma| x)^{\frac 12+\epsilon} \ll x^{\frac 34+\epsilon}, 
\] 
since $q\le \sqrt{x}$ and $q|\gamma x| \le \sqrt{x}$.  This establishes 
(\ref{411}) and hence our Proposition.

\endproof

We now consider the ``local main terms" $M(x,y;q,\gamma)$, 
and start with a simple reduction.
\smallskip

%
%
\begin{lemma}~\label{le62} 
Given a positive integer $q$, write  $q=q_0 q_1$,
in which $q_0 \in \sS(y)$ and $q_1$ is divisible 
only by primes larger than $y$. Let
$M(x,y;q,\gamma)$ be as in Proposition \ref{pr42}. Then
\beql{621a}
M(x,y;q,\gamma) = \frac{\mu(q_1)}{\phi(q_1)} M(x,y;q_0,\gamma).
\eeq
\end{lemma}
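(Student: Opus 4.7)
The plan is to observe that the key simplification comes from the fact that $n$ ranges over $y$-smooth integers, so $n$ is coprime to $q_1$. First I would note that for any $n \in \sS(y)$ we have $(n, q_1) = 1$, since every prime factor of $n$ is at most $y$ while every prime factor of $q_1$ exceeds $y$. Consequently $(q,n) = (q_0 q_1, n) = (q_0, n)$, and therefore
\[
\frac{q}{(q,n)} \;=\; q_1 \cdot \frac{q_0}{(q_0,n)}.
\]

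Next I would use multiplicativity. The two factors $q_1$ and $q_0/(q_0,n)$ are coprime (their prime factors again lie on opposite sides of the threshold $y$), so by the multiplicativity of $\mu$ and $\phi$,
\[
\frac{\mu(q/(q,n))}{\phi(q/(q,n))} \;=\; \frac{\mu(q_1)}{\phi(q_1)} \cdot \frac{\mu(q_0/(q_0,n))}{\phi(q_0/(q_0,n))}.
\]

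Finally I would substitute this identity into the defining sum
\[
M(x,y;q,\gamma) = \sum_{n \in \sS(y)} \frac{\mu(q/(q,n))}{\phi(q/(q,n))}\, e(n\gamma)\, \Phi(n/x),
\]
pull the factor $\mu(q_1)/\phi(q_1)$ outside the sum, and recognize what remains as exactly $M(x,y;q_0,\gamma)$. This yields \eqref{621a}. There is no real obstacle here; the lemma is essentially a bookkeeping observation that the ``large prime'' part $q_1$ of $q$ factors cleanly out of the local main term because $y$-smoothness forces $(n,q_1)=1$.
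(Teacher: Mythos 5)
Your proof is correct and is exactly the argument the paper leaves implicit when it says the lemma is ``immediate from the definition'': smoothness of $n$ forces $(n,q_1)=1$, so $q/(q,n)=q_1\cdot q_0/(q_0,n)$ and the factor $\mu(q_1)/\phi(q_1)$ pulls out of the sum by multiplicativity. No differences worth noting.
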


\begin{proof}
This  is immediate from the definition \eqn{401b}.
\end{proof}

It remains to treat the case $q_0 \in \sS(y)$, and here we  use the  saddle 
point method of Hildebrand and Tenenbaum discussed in \S 4 to obtain 
an understanding of this main term.  In the following result  the
lower bound $y \ge (\log x)^{2+\delta}$ is imposed only as a necessary condition
for nontriviality of the  estimate. 
\smallskip

%
%
\begin{prop}\label{prop62}  Assume the truth of the GRH. 
Let $x$ and $y$ be large, and assume that 
$(\log x)^{2 +\delta} \le y\le \exp((\log x)^{\frac{1}{2}-\delta})$. 
Let $c=c(x,y)$ denote the 
 Hildebrand-Tenenbaum saddle point value given in section 4.
 Suppose $q_0 \in \sS(y)$ with $q_0< \sqrt{x}$,
 let $\gamma$ be real with $|\gamma| \le 1/(q_0\sqrt{x})$,  
and let $M(x,y;q_0,\gamma)$ be as in Proposition \ref{pr42}.   
Then we have:

(1) If $|\gamma| \ge x^{\delta -1}$ then, for any fixed $\epsilon>0$, 
$$
|M(x,y;q_0, \gamma)| \ll x^{\frac{3}{4} + \epsilon} q_0^{-\frac{3}{4}+\epsilon}.
$$ 

(2) If $|\gamma| \le x^{\delta-1}$ we have , for any fixed $\epsilon>0$, 
\begin{eqnarray*}
 M(x,y;q_0,\gamma)& =&   \frac{1}{q_0^c} \prod_{p|q_0} 
\Big( 1-\frac{p^c-1}{p-1}\Big) (c{\check \Phi}(c,\gamma x)) \Psi(x,y) + 
O_{\epsilon}(x^{\frac{3}{4}+\epsilon}q_0^{-\frac{3}{4}+\epsilon})\\
&&\hskip 1 in + O_{\epsilon}\Big(\frac{\Psi(x,y) q_0^{-c+\epsilon}}{(\log y)(1+|\gamma| x)^2}\Big).
\end{eqnarray*}
\end{prop}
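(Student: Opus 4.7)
The plan is to express $M(x, y; q_0, \gamma)$ as a single contour integral and evaluate it using the Hildebrand--Tenenbaum saddle-point method at $c = c(x, y)$. First, decompose each $n \in \sS(y)$ as $n = dm$ with $d = (n, q_0)$ and $(m, q_0/d) = 1$; since $q_0 \in \sS(y)$, this automatically forces $d, m \in \sS(y)$. Applying Lemma~\ref{le32}(1) to $e(dm\gamma)\Phi(dm/x)$ and interchanging the sum over $m$ with the Mellin integral (justified by absolute convergence of $\sum_{m} m^{-s}$ over $\sS(y)$ for $\mathrm{Re}(s) > 0$) yields
\[
M(x, y; q_0, \gamma) = \frac{1}{2\pi i}\int_{(c)} \zeta(s; y)\, F(s)\, \check{\Phi}(s, \gamma x)\, x^s\, ds,
\]
where a short multiplicative computation, carried out on each prime power $p^{a} \| q_0$ separately, gives the closed form
\[
F(s) := \sum_{d \mid q_0} \frac{\mu(q_0/d)}{\phi(q_0/d)}\, d^{-s} \prod_{p \mid q_0/d} (1 - p^{-s}) = \frac{1}{q_0^{s}} \prod_{p \mid q_0}\Big(1 - \frac{p^{s} - 1}{p - 1}\Big).
\]
Evaluating at $s = c$ reproduces the precise arithmetic factor in the main term of part~(2).

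I would then shift the line of integration to $\mathrm{Re}(s) = c$ and split it into the central segment $|t| \le 1/\log y$ and the tails. On the central segment, both $F(c + it)$ and $\check{\Phi}(c + it, \gamma x)$ are slowly varying, so Taylor expanding them about $t = 0$ and pulling their central values outside reduces the central contribution to $F(c)\check{\Phi}(c, \gamma x)$ times $\frac{1}{2\pi i}\int_{c - i/\log y}^{c + i/\log y} \zeta(s; y)\, x^{s}\, ds$. Writing $s = c + it$ with $|t| \le 1/\log y$, this last integral splits into $c \cdot \frac{1}{2\pi i}\int \zeta(s; y) x^{s}\, \frac{ds}{s}$ plus an error of $|t|/|s|$ type controlled by Lemma~\ref{le93}(part~2); by Lemma~\ref{le93} combined with Theorem~\ref{th91} the dominant integral equals $\Psi(x, y)(1 + O(1/u + \log y/y))$. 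This recovers the claimed main term $\frac{c}{q_0^{c}} \prod_{p \mid q_0}(1 - \tfrac{p^{c} - 1}{p - 1}) \check{\Phi}(c, \gamma x)\, \Psi(x, y)$.

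For the tails $1/\log y \le |t| \le y$, I would use Lemma~\ref{le91}, which supplies the exponential decay $|\zeta(c + it; y)/\zeta(c; y)| \ll \exp(-c_0 u t^{2}/((1 - c)^{2} + t^{2}))$, together with the elementary bound $|F(c + it)| \ll q_0^{-c + \epsilon}$ and the decay of $\check{\Phi}$ from Lemma~\ref{le33}; integrated against the saddle-point mass this is of lower order than the main term. For $|t| \ge y$ I would shift further to $\mathrm{Re}(s) = \tfrac{1}{2} + \epsilon$ and apply the GRH estimate for $\zeta(s; y)$ from Proposition~\ref{le43}, combined with the $L^{1}$ bound of Lemma~\ref{le34} to integrate $|\check{\Phi}(s, \gamma x)|$; using $q_0 |\gamma| \sqrt{x} \le 1$ this contributes at most $O(x^{1/2 + \epsilon} q_0^{-1/2 + \epsilon} (1 + |\gamma| x)^{1/2 + \epsilon}) \ll x^{3/4 + \epsilon} q_0^{-3/4 + \epsilon}$.

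Part~(1) then falls out of the same integral: when $|\gamma| x \ge x^{\delta}$, Lemma~\ref{le33} applied with a sufficiently large $k$ makes $\check{\Phi}(c + it, \gamma x)$ polynomially small throughout the central segment and the nearby tail, wiping out the would-be main term and leaving only the shifted-contour bound above. The main obstacle is recovering the sharp secondary error $\Psi(x, y)\, q_0^{-c + \epsilon}/((\log y)(1 + |\gamma| x)^{2})$ in part~(2): this demands expressing $\check{\Phi}(c + it, \gamma x) - \check{\Phi}(c, \gamma x)$ via two integrations by parts against $e(\gamma x w)$ in the defining integral of $\check{\Phi}$, thereby coupling the factor $|t| \le 1/\log y$ from the width of the central segment with two powers of $(1 + |\gamma| x)^{-1}$ gained from the additive-character oscillation, and then combining this with the $L^{1}$ version of Lemma~\ref{le93} to integrate against $|\zeta(c + it; y)|$.
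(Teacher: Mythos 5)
Your proposal is correct and follows essentially the same route as the paper's own proof: the same Mellin representation with the arithmetic factor $q_0^{-s}\prod_{p\mid q_0}\bigl(1-\frac{p^s-1}{p-1}\bigr)$, the same contour through the saddle point $c$ split at $|t|\le 1/\log y$, $1/\log y\le |t|\le y$ (via Lemma~\ref{le91}) and beyond height $y$ (GRH via Proposition~\ref{le43} and Lemma~\ref{le34}), and the same device of two integrations by parts coupling the $|t|$-saving to $(1+|\gamma|x)^{-2}$, inserted into the absolute-value integral of Lemma~\ref{le93}. The only point you leave implicit is the pair of horizontal connecting segments at height $\pm y$ between $\mathrm{Re}(s)=c$ and $\mathrm{Re}(s)=\tfrac12+\epsilon$, which the paper bounds with the same tools plus the observation $\log\zeta(c;y)\ge \log x/(2\log y)$; this is routine bookkeeping rather than a gap.
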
   
 
\proof 
Using Lemma \ref{le32} we see that for any $\sigma >0$ we have 
$$ 
M(x,y;q_0,\gamma) = \frac{1}{2\pi i}\int_{\sigma-i\infty}^{\sigma+i\infty} 
\sum_{n \in {\mathcal S}(y)} \frac{\mu(q_0/(q_0,n))}{\phi(q_0/(q_0,n))} \frac{1}{n^s} {\check \Phi}(s,\gamma x)x^s ds. 
$$ 
We now may write
$$ 
\sum_{n \in {\mathcal S}(y)} \frac{\mu(q_0/(q_0,n))}{\phi(q_0/(q_0,n))} \frac{1}{n^s}  
= \zeta(s;y) H(s; q_0),
$$
where $H(s; q_0)$ is  a Dirichlet series involving only integers with prime factors dividing $q_0$.
For each prime $p|q_0$ let $\nu_p(q_0)$ denote the exact power of $p$ dividing $q_0$, 
so that $\nu_p(q_0)\ge 1$.   Then 
\begin{eqnarray}\label{M0}
H(s;q_0) &= & \prod_{p|q_0}\Big(1-\frac{1}{p^s}\Big)\Big( \sum_{k=\nu_{p}(q_0)-1}^{\infty} \frac{\mu(p^{\nu_p(q_0)}/(p^{\nu_p(q_0)},p^k))}{\phi(p^{\nu_p(q_0)}/(p^{\nu_p(q_0)},p^k))} \frac 1{p^{ks}} \Big)
\nonumber \\
&= & \prod_{p|q_0}\Big(1-\frac 1{p^s}\Big) \Big( -\frac{1}{(p-1)} \frac{1}{p^{(\nu_p(q_0)-1)s}} + \frac{1}{p^{\nu_p(q_0)s}}\Big(1-\frac{1}{p^s}\Big)^{-1}\Big)
\nonumber  \\
&= & \frac{1}{q_0^s} \prod_{p|q_0} \Big(1- \frac{p^s-1}{p-1}\Big). 
\end{eqnarray}

We may now write our integral formula as
\beql{M1}
M(x,y;q_0,\gamma) =  \frac{1}{2\pi i} \int_{\sigma-i\infty}^{\sigma+i\infty} 
\zeta(s;y)H(s;q_0)  {\check \Phi}(s,\gamma x) x^sds.  
\eeq

We deform the integral above, replacing it by an integral over a piecewise linear contour 
consisting of (i) a line segment $c_1+it$ with $t$ going from $-y$ to $y$, (ii) a horizontal line segment going 
from $c_1+ iy$ to $1/2+\epsilon+iy$ and another going from $1/2+\epsilon-iy$ to $c_1-iy$, and 
(iii) a vertical line segment going from $1/2+\epsilon+iy$ to $1/2+\epsilon +i\infty$ and another 
going from $1/2+\epsilon-i\infty$ to $1/2+\epsilon -iy$.  
The shift of contour is permitted because the integrand is holomorphic and bounded 
in vertical strips $0< \sigma_1< \text{Re}(s)< \sigma_2$ and
is rapidly decreasing as $|\text{Im}(s)| \to \infty$ using the bound of  Lemma \ref{le33}.
The  proofs of  (1) and (2)  will choose different values of $c_1$.
In the calculations 
below it will be useful to keep in mind that for all $s$ with $1/2\le {\text{Re}}(s)\le 1$ 
we have 
$$
|H(s;q_0)| \le d(q_0) q_0^{-{\rm Re}(s)} \ll q_0^{-{\rm Re}(s)+\epsilon}.
$$

We consider first the vertical line segments given in case (iii) above,
which do not depend on the choice of $c_1$.  Using Proposition 
\ref{le43} (which assumes GRH),   
we see that the contribution of these segments to $M(x,y;q_0,\gamma)$ is 
\begin{eqnarray}\label{M2}
&\ll &  q_0^{-\frac 12+\epsilon} x^{\frac 12+\epsilon} \int_{-\infty}^{\infty} 
(1+|t|)^{\epsilon} |{\check \Phi}(\tfrac 12+\epsilon+it,\gamma x)| dt \nonumber \\
&\ll &q_0^{-\frac 12+\epsilon} x^{\frac 12+\epsilon} (1+|\gamma x|)^{\frac{1}{2}+\epsilon} 
\ll q_0^{-\frac 34+\epsilon} x^{\frac 34+\epsilon}, 
\end{eqnarray}
upon using Lemma \ref{le34} and that $|\gamma|x\le \sqrt{x}/q_0$
To handle the remaining integrals, we distinguish two cases depending 
on whether $|\gamma| \ge x^{\delta- 1}$ or not.  

(1) First we treat the case when $|\gamma|\ge x^{\delta-1}$. 
In this case we will choose  $c_1= 1+\epsilon$. 
Taking $k$ suitably large in Lemma \ref{le33} (depending on $\delta$) we find that 
${\check \Phi}(s,\gamma x) \ll x^{-1}$ for all $s$ on the portions of the 
contour given in (i) and (ii) above.   Consider the contribution
to the integral of the horizontal line 
segments in (ii).  Proposition \ref{le43} gives that 
\beql{677}
|\zeta(s;y)| \ll_{\epsilon}   \exp\Big(\frac{y^{1-\sigma}}{(1+y)\log y}\Big) |s|^{\epsilon}
 \ll_{\epsilon}    |s|^{\epsilon} \ll y^\epsilon, 
\eeq
and so this contribution is 
\[ 
\ll y^{\epsilon} x^{-1} \int_{1/2+\epsilon}^{c_1} x^{\sigma} q_0^{-\sigma+\epsilon} 
d\sigma \ll x^{\epsilon}.
\] 
Next consider the vertical line segment given in (i).  Here we bound 
$|\zeta(s;y)|$ by $\zeta(c_1,y) \ll_{\epsilon} 1$, so this segment 
 contributes 
\[
\ll x^{c_1} q_0^{-c+\epsilon} x^{-1} \zeta(c_1;y) y \ll_{\epsilon} x^{\epsilon}. 
\] 
Combining these estimates with \eqn{M2} we conclude that when $|\gamma|\ge x^{\delta -1}$ 
we have $M(x,y;q_0 ,\gamma) \ll x^{\frac 34+\epsilon} q_0^{-\frac 34+\epsilon}$, as claimed.

(2) Now we turn to the case when $|\gamma| \le x^{\delta-1}$.  
In this case we choose $c_1=c$ to be the Hildebrand-Tenenbaum saddle point value.
We use Lemma 
\ref{le33} with $k=2$ which gives that ${\check \Phi}(s,\gamma) \ll |s|^2/(1+|\gamma|x)^2$.  
Now consider  the contribution to the integral of the horizontal line segments described in (ii).  As
 in \eqn{677} above, 
Proposition \ref{le43} gives that  $|\zeta(s;y)| \ll |s|^{\epsilon}\ll y^{\epsilon}$, and 
so the contribution of these line 
segments to the integral giving $M(x,y;q_0,\gamma)$ is 
$$ 
\ll y^{\epsilon} \frac{y^2}{(1+|\gamma|x)^2}  \int_{1/2+\epsilon}^{c} x^{\sigma} q_0^{-\sigma+\epsilon} d\sigma 
\ll \frac{y^{2+\epsilon} }{(1+|\gamma|x)^2} q_0^{-c+\epsilon}  x^{c}.
$$
Note that, using (\ref{S802}),
$$
\log \zeta(c;y) \ge \sum_{p\le y} p^{-c} \ge \frac 1{2\log y} \sum_{p\le y} \frac{\log p}{p^c-1} 
= \frac{\log x}{2\log y},
$$ 
and so the contribution of the horizontal line segments is, using Theorems 
\ref{th91} and \ref{th92} 
$$
\ll \frac{ y^{2+\epsilon}}{(1+|\gamma|x)^2} q_0^{-c+\epsilon}  x^{c} 
\zeta(c;y)\exp\Big(-\frac {\log x}{2\log y}\Big) 
\ll q_0^{-c+\epsilon} \Psi(x,y) \frac{y^{2+\epsilon} \log x}{(1+|\gamma|x)^2} \exp\Big(-\frac{\log x}{2\log y}\Big).
$$ 
(Here we used the bound $\sqrt{2\pi \phi_2(c, y)} \ll \log x$ from Theorem \ref{th92}
and $y> (\log x)^{1+\epsilon}$.)
Since $y\le \exp((\log x)^{\frac 14})$, this yields the bound
\beql{M3}
\ll \frac{ q_0^{-c+\epsilon}   \Psi(x,y)}{(\log y)^3 (1+|\gamma|x)^2},
\eeq 
with plenty to spare.

Finally we consider the contribution of the vertical line segment given in (i).  We split this integral 
into the regions $|t|\le 1/\log y$ and $1/\log y \le |t| \le y$.  
We  first treat the saddle-point region $|t|\le 1/\log y$ lying near the real axis, which
 contributes to the   
main term of the formula in (2).  Certainly $c+it = c+O(|t|)$, and we may check easily that 
for $|t| \le 1$
$$ 
|H(c+it;q_0) - H(c;q_0) | \ll |t| q_0^{-c+\epsilon}.
$$ 
It is clear that 
$$ 
{\check \Phi}(c+it,\gamma x) -{\check \Phi}(c,\gamma x) =
\int_0^{\infty} \Phi(w) e(\gamma xw) (w^{c-1+it} -w^{c-1}) dw \ll |t|,
$$ 
and integrating by parts twice we also have 
$$ 
{\check \Phi}(c+it,\gamma x) - 
{\check \Phi}(c,\gamma x) =  \int_{0}^{\infty} \frac{d^2}{dw^2} \Big( \Phi(w)w^{c-1}(w^{it}-1) \Big)
 \frac{e(\gamma xw)}{(2\pi i x\gamma)^2} dw 
\ll \frac{|t|}{(|\gamma| x)^2}.
$$ 
We conclude that $|{\check \Phi}(c+it, \gamma x) - {\check \Phi}(c,\gamma x) | \ll |t|/(1+|\gamma|x)^2$.  Putting these observations together we see that for $|t|\le 1/\log y$, 
$$ 
|(c+it) {\check \Phi}(c+it, \gamma x) H(c+it;q_0) -c {\check \Phi}(c,\gamma x) H(c;q_0)| \ll
\frac{ q_0^{-c+\epsilon} }{(\log y) (1+ |\gamma | x)^2}. 
$$ 
Hence the contribution of the region $|t|\le 1/\log y$ to $M(x,y;q_0,y)$ is 
\begin{eqnarray}\label{M4}
&&\frac{1}{2\pi i} \int_{c-i/\log y}^{c+i/\log y} 
\zeta(s;y) \frac{x^s}{s} \Big( cH(c;q_0) {\check \Phi}(c,\gamma x) + 
O_{\epsilon} \Big( \frac{q_0^{-c+\epsilon} }{(\log y)(1+|\gamma|x)^2}\Big)\Big) ds 
\nonumber  \\ 
&=& ~c H(c;q_0) {\check \Phi}(c,\gamma x) \Psi(x,y) + 
O_{\epsilon}\Big( \frac{q_0^{-c+\epsilon}}{(\log y)(1+|\gamma| x)^2} \Psi(x,y)\Big), 
\end{eqnarray}
upon using Lemma \ref{le92} to produce the main term and 
the absolute value integral in Lemma \ref{le93} to bound the error term.

Next consider the remaining region $1/\log y \le |t| \le y$ in segment (i).  Bounding 
the absolute value of the integrand, and using Lemma \ref{le91} and that $(1-c)\ll (\log \log x)/\log y$ 
(see \eqref{908}), 
this contribution is   
\begin{align*}
&\ll x^{c} q_0^{-c+\epsilon} \frac{ y^3}{(1+|\gamma| x)^2}  \max_{1/\log y \le | t| \le y} |\zeta(c+it;y)|
 \\
 &\ll q_0^{-c+\epsilon} 
\frac{y^3}{(1+|\gamma|x)^2} x^{c}\zeta(c;y) \exp\Big( - C \frac{\log x}{(\log y)(\log \log x)^2}\Big),   
\end{align*} 
for some positive constant $C$.  Appealing now to Theorems \ref{th91} and \ref{th92}, and using 
$y\le \exp((\log x)^{\frac{1}{2}-\delta})$, we deduce 
that the above is bounded by
\[
\ll q_0^{-c+\epsilon}  \Psi(x,y) \frac{y^3 \log x }{(1+|\gamma|x)^2}\exp\Big(-C \frac{\log x}{(\log y)(\log \log x)^2}\Big) 
\le \frac{q_0^{-c+\epsilon}\Psi(x,y)}{ (\log y)^{3}(1+|\gamma|x)^2}.
\]
 Combining this bound with 
(\ref{M0}), (\ref{M1}), (\ref{M2}), (\ref{M3}), (\ref{M4}), 
we obtain the estimate of the Proposition in the 
case when $|\gamma|\le x^{\delta-1}$.

\endproof
\smallskip

%
%
\proof[Proof of Theorem \ref{th2.3}] 
We use Proposition \ref{pr42}
and Lemma \ref{le43}, writing $q=q_0q_1$  to obtain
\beql{691}
E(x, y; \alpha) = \frac{\mu(q_1)}{\phi(q_1)} M(x, y; q_0, \gamma) + 
O\Big(x^{\frac{3}{4} + \epsilon}\Big).
\eeq
Now Proposition \ref{prop62}~(1) applied to $M(x,y, q_0, \gamma)$ gives the bound of part (1).
Next  we note that
 the Hildebrand-Tenenbaum saddle point $c=c(x,y)$ satisfies 
$c= 1-1/\kappa +O(\log \log y/\log y)$ (see (\ref{819a})),
 and so for $c_0=1-\frac{1}{\kappa}$ we have
\begin{eqnarray*}
\frac{1}{q_0^c}\prod_{p|q_0} \Big( 1- \frac{p^c-1}{p-1} \Big) (c{\check \Phi}(c,\gamma x) ) 
 &= &\frac{1}{q_0^{c_0}}\prod_{p|q_0} \Big(1- \frac{p^{c_0}-1}{p-1}\Big) (c_0 {\check\Phi}(c_0,\gamma x)) \\
 &&+ 
 O_{\epsilon}\Big( \frac{q_0^{-c_0+\epsilon}}{(1+|\gamma| x)^2} \frac{\log \log y}{\log y}\Big).
 \end{eqnarray*}
  Now part (2) follows upon using  this  formula in the expression for $M(x, y, q_0, \gamma)$ in
  Proposition \ref{prop62}~(2),   substituting the result into \eqn{691},
  noting that $\phi(q_1) \gg (q_1)^{-1+\epsilon}$.
\endproof

%
%
%

\section{ Counting Weighted Smooth Solutions: Proof of Theorem \ref{th21} }
 \setcounter{equation}{0}

\noindent We initially suppose that $(\log x)^{2+\delta} \le y \le \exp ( (\log x)^{\frac 12-\delta})$,
and we shall raise the lower bound on $y$ as the proof progresses.
We employ the Hardy-Littlewood circle method to evaluate 
$$ 
N(x,y;\Phi) = \int_0^1 E(x,y;\alpha)^2 E(x,y;-\alpha) d\alpha. 
$$ 
Let a fixed small number $\delta>0$  be given, which we use
as a parameter in defining major and minor arcs.
Given a rational number $a/q$ with $(a,q)=1$ and $q\le x^{\frac{1}{4}}$, 
we define the major arc centered at $a/q$ to be the set of all points 
$\alpha \in [0,1]$ with $|\alpha-a/q| \le x^{\delta-1}$.
 Note that any $\alpha \in [0,1]$ lies 
on at most one major arc.   We will find it convenient to group the major arcs $[0,x^{\delta -1}]$ 
and $[1-x^{\delta-1},1]$ together, and on ${\TT}={\RR}/{\ZZ}$ we may identify them with 
$[-x^{\delta-1},x^{\delta-1}]$.  The union of the major arcs is denoted ${\fM}$ and the 
minor arcs ${\fm}$ are defined to be the complement of 
the major arcs $[0,1]\backslash {\fM}$.

Suppose $\alpha$ lies on a minor arc.  By Dirichlet's theorem on Diophantine 
approximation we may write $\alpha=a/q+\gamma$ where $q\le \sqrt{x}$, $(a,q)=1$ 
and $|\gamma|\le 1/(q\sqrt{x})$.  Since $\alpha \in {\fm}$ we must have 
that either $q>x^{\frac 14}$, or that $|\gamma| \ge x^{\delta-1}$.  If the latter case holds
then, using Propositions \ref{pr42} and  \ref{prop62}, we find that 
$E(x,y;\alpha) \ll x^{\frac{3}{4}+\epsilon}$.   In the former case, Proposition \ref{prop62} 
with \eqn{621a} gives that 
\[
M(x,y;q,\gamma) \ll \Psi(x,y)q_0^{-c+\epsilon}q_1^{-1+\epsilon}  \ll x^{\frac 34+\epsilon}.
\]
Then by Proposition \ref{pr42} we have $E(x,y;\alpha) \ll x^{\frac 34+\epsilon}$.  
Thus $E(x,y;\alpha)\ll x^{\frac 34+\epsilon}$ when $\alpha$ lies on a minor arc.  Therefore 
\[ 
\int_{{\fm} } E(x,y;\alpha)^2 E(x,y;-\alpha) d\alpha \ll x^{\frac 34+\epsilon} 
\int_{\fm} |E(x,y;\alpha)|^2 d\alpha \ll x^{\frac 34+\epsilon} \int_0^1 |E(x,y;\alpha)|^2 
d\alpha,
\]
By Parseval, we have 
\[
\int_{0}^1 |E(x, y; \alpha)|^2 d \alpha= \sum_{n \in \sS(y)} \Big|\Phi\Big(\frac{n}{x}\Big)\Big|^2 
\ll E(x,y;0) 
\ll \Psi(x, y), 
\]
where the last inequality follows from Theorem \ref{th2.3}, or alternatively  from an 
application of Theorem \ref{th93}.
From this we obtain  the minor arc bound
\beql{7.1}
\int_{\fm} E(x,y;\alpha)^2 E(x,y;-\alpha) d\alpha 
\ll x^{\frac{3}{4}+\epsilon} \Psi(x,y). 
\eeq

It remains now to evaluate the major arc contribution.  If $z=z_1+O(z_2)$ then it 
follows that $|z|^2z = |z_1|^2 z_1 + O(|z_2| |z|^2)$.   Therefore if $\alpha$ lies 
on the major arc centered at $a/q$, Proposition \ref{pr42} gives that, with $\alpha =a/q+\gamma$ as 
before,  
\begin{eqnarray*}
E(x,y;\alpha)^2 E(x,y;-\alpha) &=& |E(x,y;\alpha)|^2 E(x,y;\alpha) 
\\
&=& |M(x,y;q,\gamma)|^2M(x,y;q,\gamma) 
+O_{\epsilon}(x^{3/4+\epsilon} |E(x,y;\alpha)|^2).
\\
\end{eqnarray*}
Thus the major arc contribution is 
$$ 
\sum_{q\le x^{\frac{1}{4}}} \sum_{{a=0}\atop {(a,q)=1}}^{q-1} \int_{-x^{\delta-1}}^{x^{\delta-1}}  
|M(x,y;q,\gamma)|^2 M(x,y;q,\gamma) d\gamma + O_{\epsilon}\Big(x^{\frac 34+\epsilon} \int_0^1 |E(x,y;\alpha)|^2 
d\alpha\Big),
$$ 
which we may simplify to 
\beql{7.2} 
\sum_{q\le x^{\frac{1}{4}}} \phi(q) \int_{-x^{\delta-1}}^{x^{\delta -1}} |M(x,y;q,\gamma)|^2 M(x,y;q,\gamma) 
d\gamma + O_{\epsilon}(x^{\frac{3}{4}+\epsilon} \Psi(x,y)). 
\eeq

Using the decomposition $q=q_0q_1$ of Lemma \ref{le62}
and the estimate of Proposition \ref{prop62} (2),
we find that  $|M(x,y;q,\gamma)|^2 M(x,y;q,\gamma)$ equals  
\begin{eqnarray*}
&& \frac{\mu(q_1)}{\phi(q_1)^3} 
\frac{1}{q_0^{3c}} \prod_{p|q_0} \Big(1- \frac{p^c-1}{p-1}\Big)^{3} c^3 |{\check \Phi}(c, \gamma x)|^2 
{\check \Phi}(c, \gamma x) \Psi(x,y)^3\\
&&\hskip 0.2 in + 
O_{\epsilon}\Big(\frac{1}{\phi(q_1)^3}  \sum_{j=1}^{3} \Big( x^{\frac 34+\epsilon} q_0^{-\frac 34+\epsilon} 
+ \frac{\Psi(x,y) q_0^{-c+\epsilon}}{(\log y)(1+|\gamma|x)^2}\Big)^{j} 
\Big(\frac{1}{q_0^c }\Psi(x,y) {\check \Phi}(c,\gamma x)\Big)^{3-j} \Big). \\
\end{eqnarray*}
Since $\sum_{j=1}^{3} A^j B^{3-j} \ll A^3 +A B^2$, 
and since $|{\check \Phi}(c, \gamma x)| = O(1)$ (using Lemma \ref{le33})
we may simplify the error term above to
\[
 \ll   \frac{1}{\phi(q_1)^3} \Big( x^{\frac{9}{4} + 3 \epsilon} q_0^{-\frac{9}{4}+ 3\epsilon}+
x^{\frac 34+\epsilon} q_0^{-2c-\frac 34+\epsilon} \Psi(x,y)^2 
 + \frac{\Psi(x,y)^3 q_0^{-3c+\epsilon}}{(\log y) 
 (1+|\gamma|x)^2}\Big).
\]
 To upper bound the contribution of this error term to (\ref{7.2}), we note that
 \[
 \int_{-x^{\delta -1}}^{x^{\delta -1}}\frac{d\gamma}{(1+ |\gamma x|)^2} \ll \frac{1}{x},
 \]
 and then we obtain
 \beql{7.3b}
 \ll \sum_{q\le x^{\frac{1}{4}}} \frac{\phi(q)}{\phi(q_1)^3} \Big(
  x^{\frac{5}{4}+\delta+\epsilon} q_0^{-\frac{9}{4}+\epsilon}  + 
x^{-\frac{1}{4}+\delta+\epsilon} q_0^{-2c-\frac{3}{4}+\epsilon} \Psi(x,y)^2 + \frac{\Psi(x,y)^3 q_0^{-3c+\epsilon}}{x\log y} \Big). 
 \eeq

We now raise the lower bound to $y\ge (\log x)^{4+8\delta}$.
We then have $\Psi(x,y) \ge x^{\frac{3}{4}+ \frac{3}{2}\delta}$, and
in addition the Hildebrand-Tenenbaum
saddle point $c> \frac{3}{4}$ by \eqn{819a}.
We deduce for $y$ in this range the 
error term contribution (\ref{7.3b}) above is 
\[
\ll \frac{\Psi(x,y)^3}{x\log y}. 
\]
We conclude that  for $y \ge (\log x)^{4 + 8\delta}$  the major arcs contribution is
\begin{eqnarray*}
&&\Psi(x,y)^3 \sum_{q\le x^{\frac 14}} 
\frac{\mu(q_1)}{\phi(q_1)^2} \frac{\phi(q_0)}{q_0^{3c}} \prod_{p|q_0} 
\Big(1-\frac{p^c-1}{p-1}\Big)^3 \int_{-x^{\delta-1}}^{x^{\delta-1}} 
c^3 |{\check \Phi}(c,\gamma x)|^2 {\check \Phi}(c,\gamma x) d\gamma 
 \\
&&\hskip 1 in + O_{\epsilon}\Big( x^{\frac{3}{4}+\epsilon} \Psi(x,y) + \frac{\Psi(x,y)^3}{x\log y}\Big).
\\
\end{eqnarray*}

Using Lemma \ref{le33} with $k=2$, and the Plancherel formula, we obtain that 
\begin{eqnarray*}
 \int_{-x^{\delta-1}}^{x^{\delta-1}} 
c^3 |{\check \Phi}(c,\gamma x)|^2& {\check \Phi}&(c,\gamma x) d\gamma 
= \frac{c^3}{x} \int_{-x^{\delta}}^{x^{\delta}} |{\check \Phi}(c,\xi)|^2 {\check \Phi}(c,\xi) 
d\xi 
\\
&=& \frac{c^3}{x} \Big( \int_{-\infty}^{\infty} |{\check \Phi}(c,\xi)|^2 {\check \Phi}(c,\xi) 
d\xi + O\Big( \int_{|\xi|> x^{\delta}} \frac{1}{1+\xi^2} d\xi \Big) 
\Big)\\ 
&=& \frac{c^3}{x} \Big( \int_0^{\infty} \int_{0}^{\infty} \Phi(t_1) \Phi(t_2) \Phi(t_1+t_2) 
(t_1 t_2 (t_1+t_2))^{c-1} dt_1 dt_2 + O(x^{-\delta})\Big) 
\\
&=& \frac{\fS_{\infty}(c,\Phi)}{x} + O(x^{-1-\delta}).
\\
\end{eqnarray*}

For $y \ge (\log x)^{4+8\delta}$, using $c \ge 3/4$ we see that 
\begin{eqnarray*} 
\sum_{q\le x^{\frac 14}} \frac{\mu(q_1)}{\phi(q_1)^2} \frac{\phi(q_0)}{q_0^{3c}}&&
\prod_{p|q_0} \Big(1-\frac{p^c-1}{p-1}\Big)^3 \\
&=&\sum_{q=1}^{\infty}  \frac{\mu(q_1)}{\phi(q_1)^2} \frac{\phi(q_0)}{q_0^{3c}} 
\prod_{p|q_0} \Big(1-\frac{p^c-1}{p-1}\Big)^3  + O\Big( \sum_{q>x^{\frac{1}{4}}} 
\frac{1}{q^{3c-1-\epsilon}}\Big) 
\\
&=&\Big( 
\sum_{q_0 \in \sS(y)} \frac{\phi(q_0)}{q_0^{3c}} 
\prod_{p|q_0} \Big(1-\frac{p^c-1}{p-1}\Big)^3 \Big) 
\Big(\sum_{{q_1}\atop{p|q_1 \Rightarrow p>y}} \frac{\mu(q_1)}{\phi(q_1)^2}\Big)
+O(x^{-\frac 1{16}}) 
\\
&=& \prod_{p\le y}\Big( 1+ \frac{p-1}{p(p^{3c-1}-1)}\Big(\frac{p-p^c}{p-1}\Big)^3\Big)
 \prod_{p>y}\Big( 1- \frac{1}{(p-1)^2}\Big) + O(x^{-\frac 1{16}}) 
\\
&=& \fS_f(c,y) + O(x^{-\frac 1{16}}). 
\end{eqnarray*}
Putting these remarks together, we conclude that for $y \ge (\log x)^{4+8\delta}$
the major arcs contribution is 
$$ 
\fS_\infty(c,\Phi)\fS_f(c,y)  \frac{\Psi(x,y)^3}{x} + 
O_{\epsilon}\Big( x^{\frac 34+\epsilon} \Psi(x,y) + \frac{\Psi(x,y)^3}{x\log y}\Big). 
$$ 
We  combine  this result with the minor arcs estimate  (\ref{7.1}) to conclude that 
\beql{7.3}
N(x,y;\Phi) =  \fS_\infty(c,\Phi) \fS_f(c,y) \frac{\Psi(x,y)^3}{x} + O_{\epsilon}\Big( x^{\frac 34+\epsilon} \Psi(x,y) 
+ \frac{\Psi(x,y)^3}{x\log y} \Big). 
\eeq

To obtain an asymptotic formula,  we now impose the lower bound
  $y\ge (\log x)^{8+ \delta}$ . Thus $\kappa \ge 8+\delta$,
so that $\Psi(x,y) =x^{1-1/\kappa+o(1)} >x^{\frac{7}{8}+\epsilon}$.
Now by (\ref{819a}) we know that
$c=1-1/\kappa +O(\log \log y/\log y)$.  Both $\fS_f(c,y)$ and $\fS_\infty(c,\Phi)$ are 
of constant size, and moreover we have
$$ 
\fS_f(c,y) = \fS\Big(1-\frac 1{\kappa},y\Big) + O\Big( \frac{\log \log y}{\log y}\Big),
$$ 
and 
$$
\fS_\infty(c,\Phi) = \fS_\infty\Big(1-\frac 1\kappa,\Phi\Big) + O\Big(\frac{\log \log y}{\log y}\Big). 
$$ 
We use these observations in (\ref{7.3}), and note also that 
the lower bound on $\Psi(x,y)$ above
implies that the error term $x^{\frac{3}{4}+\epsilon} \Psi(x,y)$ is subordinate 
to the error term $\Psi(x,y)^3/(x\log y)$, so that (\ref{7.3})
is an asymptotic formula. This proves Theorem \ref{th21}.

%
%
%
\section{Counting Weighted Primitive Smooth Solutions: Proof of Theorem \ref{th121}}
 \setcounter{equation}{0}
 
 \noindent 
 We suppose $(\log x)^{8+\delta} \le y \le \exp\Big( (\log x)^{\frac 12-\delta}\Big)$.
  Let $z=\frac{1}{2}\log y$, and put $P_z= \prod_{p\le z} p$.  By the 
 prime number theorem we know that $P_z =e^{z+o(z)} \le y$.  We assert that
\beql{1211}
\Big| N^{\ast}(x,y; \Phi) -
\sum_{d| P_z}\mu(d) N\Big(\frac{x}{d},y; \Phi\Big) \Big| \le 
\sum_{z< p \le y} N\Big(\frac{x}{p}, y; |\Phi| \Big).
\eeq
To establish (\ref{1211}), it suffices to observe that its left hand side 
counts weighted solutions  to $X+Y=Z$ with $XYZ \in \sS(y)$ and such that 
the gcd $(X, Y, Z)$ is an integer greater than $1$ and divisible 
only by primes larger than $z$.  
The proof will derive the desired
  asymptotic formula for the inclusion-exclusion sum
  $\sum_{d| P_z}\mu(d) N\Big(x/d,y; \Phi\Big)$ on the left side of \eqn{1211} and
   will complete the argument by showing  that the
  right side of \eqn{1211} is small compared to this asymptotic estimate.
  
 To handle the terms arising in (\ref{1211}) we first consider 
 $N(x/k,y;\Phi)$ and $N(x/k,y;|\Phi|)$ where $1\le k\le y$.  In our 
 range for $x$ and $y$, the exponent $\kappa=\kappa(x,y) := \log y/\log \log x$  satisfies
 $$
 \Big|\frac 1{\kappa(x,y)}-\frac{1}{\kappa(x/k,y)}\Big|  
 \le \frac{\log \log x - \log \log (x/y)}{\log y} \ll \frac{1}{\log x}, 
 $$ 
  and therefore 
 \begin{eqnarray*}
 && \fS_{\infty}\Big( 1-\frac{\log \log (x/k}{\log y},\Phi\Big)
 \fS_{f}\Big( 1-\frac{\log \log (x/k)}{\log y}, y\Big)  \\
&&\hskip 1 in  = \fS_{\infty} \Big(1-\frac{\log \log x}{\log y}, \Phi\Big) \fS_{f}\Big( 1-\frac{\log \log x}{\log y},y\Big) 
 + O\Big( \frac{1}{\log x}\Big).
 \\
 \end{eqnarray*}
 Furthermore, by Theorem \ref{th93}  we have that 
 \[ 
 \Psi(\frac{x}{k},y) = k^{-c(x/k,y)} \Psi(x,y) \Big( 1+ O\Big(\frac{\log y}{\log x}\Big) \Big).,
 \] 
 where $c(x/k, y)$ is the Hildebrand-Tenenbaum saddle point.
 Using Theorem \ref{th21} we  conclude that 
\begin{align}
\label{8.2}
N\Big(\frac{x}{k},y; \Phi\Big) 
&=  
\fS_{\infty}\Big(1-\frac{\log \log x}{\log y},\Phi\Big) \fS_{f}\Big(1-\frac{\log \log x}{\log y},y\Big)
\frac{\Psi(x,y)^3}{x}k^{1-3c(x/k,y)}\nonumber \\
&\hskip 1 in
+ O\Big( k^{1-3c(x/k,y)} \frac{\Psi(x,y)^3\log \log y}{x\log y} \Big). 
\end{align}
Similarly, we obtain the upper bound 
\beql{8.3}
N(\frac{x}{k}, y; |\Phi|) \ll k^{1-3c(x/k,y)} \frac{\Psi(x,y)^3}{x}. 
\eeq

We first bound the right hand side
of (\ref{1211}). We find using 
  (\ref{8.3})  that it is bounded by 
$$ 
\ll \frac{\Psi(x,y)^3}{x} \sum_{z< p \le y} p^{1-3c(x/p,y)},
$$ 
Since by (\ref{819a}) we have
\[
c(x/p,y)=1-1/\kappa(x/p,y) + O(\log \log y/\log y) = 1-1/\kappa +O(\log \log y/\log y) > 3/4,
\]
the above is bounded by
$$ 
\ll \frac{\Psi(x,y)^3}{x} \sum_{z<p\le y} p^{-5/4} \ll \frac{\Psi(x,y)^3}{xz^{\frac{1}{4}}}.
$$

Now, using (\ref{8.2}), we treat the sum on the left side  in \eqn{1211}, and find that 
\begin{align*} 
\sum_{d|P_z} \mu(d) N\Big( \frac{x}{d},y;\Phi\Big) 
&=  \fS_\infty\Big(1-\frac 1{\kappa},\Phi\Big) \fS_f\Big(1-\frac{1}{\kappa},y\Big)
\frac{\Psi(x,y)^3}{x} \Big( \sum_{d|P_z} \mu(d) d^{1-3c(x/d,y)} \Big)
\\
&\hskip 1in + O\Big( \frac{\Psi(x,y)^3 \log \log y}{x\log y}\Big( \sum_{d|P_z} d^{1-3c(x/d,y)}\Big)\Big).
\end{align*} 
Since $c(x/d,y) >3/4$, as noted above, the remainder term here is $O(\Psi(x,y)^3\log \log y/(x\log y))$. 
Next we  treat the sum appearing in this last estimate, and again we use that 
$c(x/d,y) = 1-1/\kappa+O(\log \log y/\log y)$. We obtain 
\begin{align*}
\sum_{d|P_z} \mu(d) d^{1-3c(x/d,y)} &= 
\sum_{d\le z} \mu(d) d^{3/\kappa-2}\Big( 1+O\Big(\frac{\log d \log \log y}{\log y}\Big)\Big) 
+ O\Big( \sum_{d>z} d^{3/\kappa -2 +o(1)}\Big) \\
&=\prod_{p\le z} \Big( 1- \frac{1}{p^{2-3/\kappa}}\Big) + O\Big( \frac{\log \log y}{\log y}\Big) 
+ O(z^{-1+3/\kappa+o(1)})\\
&=\prod_{p\le y}\Big( 1- \frac{1}{p^{2-3/\kappa}}\Big) +  O((\log y)^{-\frac{1}{2}}).
\end{align*}

We now define
\begin{eqnarray}\label{901}
\fS_f^{\ast}(c,y) 
& := & \fS_f(c, y) \prod_{p\le y}\Big(1- \frac{1}{p^{3c-1}}\Big)
\nonumber \\
&= & \prod_{p\le y} \Big( 1 + \frac{1}{p^{3c-1}}\Big( \frac{p-1}{p}\Big(\frac{p-p^c}{p-1}\Big)^3 -1\Big)
\prod_{p>y} \Big(1- \frac{1}{(p-1)^2}\Big).
\end{eqnarray}
Using this definition, substition of  the above estimates in (\ref{1211}) gives
\[
N^*(x,y;\Phi) =  \fS_\infty \Big(1-\frac{1}{\kappa},\Phi\Big) \fS_f^{\ast}\Big( 1- \frac 1{\kappa},y\Big)
\frac{\Psi(x,y)^3}{x} + O\Big( \frac{\Psi(x,y)^3}{x(\log y)^{\frac{1}{4}}}\Big). 
\]
This proves Theorem \ref{th121}.  \smallskip

\paragraph{\bf Acknowledgements.}\label{ackref}
The authors thank  Kalman Gy\'{o}ry, Peter Hegarty
and Michel  Waldschmidt  for helpful remarks. The authors thank the reviewer
for useful improvements.
 Some of this work was done when
the first author visited Stanford University, and he thanks the Mathematical Research
Center at Stanford University for support. Some final revisions were done while
the first author visited MSRI, whom he thanks for support.

%

\end{document}